\newtheorem{theorem}{Theorem}[section]
\newtheorem{lemma}[theorem]{Lemma}
\newtheorem{proposition}[theorem]{Proposition}
\newtheorem{assumption}[theorem]{Assumption}
\newtheorem{corollary}[theorem]{Corollary}
\newtheorem{definition}[theorem]{Definition}
\newtheorem{remark}[theorem]{Remark}
\let\originalleft\left
\let\originalright\right
\renewcommand{\left}{\mathopen{}\mathclose\bgroup\originalleft}
\renewcommand{\right}{\aftergroup\egroup\originalright}
\renewcommand{\d}{\/\mathrm{d}\/}
\def\w{\textbf{W}^{\varepsilon}_{{\theta}^{\varepsilon}}}
\def\e{\varepsilon}
\def\S{\mathrm{S}}
\def\L{\mathbb{L}}
\def\A{\mathrm{A}}
\def\C{\mathrm{C}}
\def\f{\mathbf{f}}
\def\B{\mathrm{B}}
\def\D{\mathrm{D}}
\def\y{\mathbf{y}}
\def\K{\mathrm{K}}
\def\E{\mathbb{E}}
\def\x{\mathbf{x}}
\def\k{\mathbf{k}}
\def\z{\mathbf{z}}
\def\v{\mathbf{v}}
\def\V{\mathbb{v}}
\def\w{\mathbf{w}}
\def\W{\mathrm{W}}
\def\G{\mathrm{G}}
\def\M{\mathrm{M}}
\def\N{\mathbb{N}}
\def\V{\mathbb{V}}
\def\wi{\widetilde}
\def\u{\mathrm{U}}
\def\u{\mathbf{u}}
\def\H{\mathbb{H}}
\newcommand{\R}{\mathbb{R}}
\renewcommand{\d}{\/\mathrm{d}\/}
\newcommand{\Addresses}{{
		\footnote{
			
			\noindent \textsuperscript{1,2}Department of Mathematics, Indian Institute of Technology Roorkee-IIT Roorkee,
			Haridwar Highway, Roorkee, Uttarakhand 247667, INDIA.\par\nopagebreak
			\noindent  \textit{e-mail:} \texttt{Manil T. Mohan: maniltmohan@ma.iitr.ac.in, maniltmohan@gmail.com.}
			
			\textit{e-mail:} \texttt{Kush Kinra: kkinra@ma.iitr.ac.in.}
			
			\noindent \textsuperscript{*}Corresponding author.
			
			\textit{Key words:} 3D stochastic convective Brinkman-Forchheimer equations, periodic domains, cylindrical Wiener process, random dynamical system, absorbing sets, random attractors  
			
			Mathematics Subject Classification (2020): Primary 35B41, 35Q35; Secondary 37L55, 37N10, 35R60.

}}}
\begin{document}
	\title[Large time behavior of 3D deterministic and stochastic CBF equations]{Large time behavior of the deterministic and stochastic 3D convective Brinkman-Forchheimer equations in periodic domains
		\Addresses}
	\author[K. Kinra and M. T. Mohan]
	{Kush Kinra\textsuperscript{1} and Manil T. Mohan\textsuperscript{2*}}

	\maketitle
	\begin{abstract}
		The large time behavior of the deterministic and stochastic three dimensional convective Brinkman-Forchheimer (CBF) equations $$\partial_t\u-\mu \Delta\u+(\u\cdot\nabla)\u+\alpha\u+\beta|\u|^{r-1}\u+\nabla p=\mathbf{f},\ \nabla\cdot\u=0,$$ for $r\geq3$ ($r>3$, for any $\mu$ and $\beta$, and $r=3$ for $2\beta\mu\geq1$), in periodic domains is carried out in this work. Our first goal is to prove the existence of global attractors for the 3D deterministic CBF equations. Then, we show the existence of random attractors for the 3D stochastic CBF equations perturbed by small additive smooth noise. Finally, we  establish the upper semicontinuity of random attractor for the 3D stochastic CBF equations (stability of attractors), when the coefficient of random perturbation approaches to zero.
	\end{abstract}
	\section{Introduction} \label{sec1}\setcounter{equation}{0}
	The analysis of long time behavior of the deterministic evolution equations, especially Navier-Stokes equations (NSE) is well-investigated in \cite{Robinson1,R.Temam}, etc. Attractors are one of the most important entities in the study of long time behavior of dynamical systems generated by dissipative evolution equations. When partial differential equations (PDEs)  are perturbed by random  noises, which generate  random dynamical systems (RDS),  it is important to examine  the random attractors for such systems and their stability, that is, the convergence of the random attractors  (cf. \cite{Arnold}).  The analysis of long time behavior of infinite dimensional RDS is also an essential branch in the study of qualitative properties of stochastic PDEs (see \cite{BCF,Crauel1,CF}, etc for more details).	In the literature, a variety of results on the global and random attractors for several deterministic and stochastic models are available, the interested readers are referred to see \cite{ATT,BCL,CFT,FMRT,KT,KZ,OAL1,MTM4,Sell} etc for global attractors   and see \cite{HCPEK,FY,FS,GS,GGW,GLW,KM1,FLYB,LG,YFWLM,You,You1,ZD} etc,  for random attractors, and the references therein.
	
	This work is concerned about the long time behavior of three dimensional deterministic and stochastic convective Brinkman-Forchheimer equations in periodic domains. Let $\mathbb{T}^3 \subset \R^3$ be a periodic domain.  Let  $\u(x,t) :
	\mathbb{T}^3\times(0,\infty)\to \R^3$ denote the velocity field and  $p(x,t):
	\mathbb{T}^3\times(0,\infty)\to\R$ represent the pressure field. We consider the convective Brinkman-Forchheimer (CBF) equations, which describe the motion of incompressible fluid flows in a saturated porous medium, as:
	\begin{equation}\label{1}
	\left\{
	\begin{aligned}
	\frac{\partial \u}{\partial t}-\mu \Delta\u+(\u\cdot\nabla)\u+\alpha\u+\beta|\u|^{r-1}\u+\nabla p&=\mathbf{f}, \ \text{ in } \ \mathbb{T}^3\times(0,\infty), \\ \nabla\cdot\u&=0, \ \text{ in } \ \mathbb{T}^3\times(0,\infty), \\
	\u(0)&=\x, \ \text{ in } \ \mathbb{T}^3,
	\end{aligned}
	\right.
	\end{equation}
	where $\f(x,t):
	\mathbb{T}^3\times(0,\infty)\to \R^3$ is an external forcing. The positive constants $\mu,\alpha$ and $\beta$ represent  the Brinkman coefficient (effective viscosity), the Darcy coefficient (permeability of porous medium) and Forchheimer coefficient, respectively. The exponent $r\in[1,\infty)$ is called the absorption exponent and  $r=3$ is known as the critical exponent. For $\alpha=\beta=0$, we obtain the classical 3D Navier-Stokes equations (NSE). The applicability of CBF equations \eqref{1} is limited to flows when the velocities are sufficiently high and porosities are not too small, that is, when the Darcy law for a porous medium no longer applies (cf. \cite{PAM}). It should be noted that the critical homogeneous CBF equations \eqref{1} have the same scaling as the NSE only when $\alpha=0$ (see Proposition 1.1, \cite{HR} and no scale invariance property for other values of $\alpha$ and $r$), which is sometimes referred to as the NSE modified by an absorption term (\cite{SNA}) or the tamed NSE (\cite{MRXZ}). The existence and uniqueness of Leray-Hopf weak solutions satisfying the energy equality as well as strong solutions (in the deterministic sense)  for  the 3D CBF equations \eqref{1} for $r\geq 3$ ($2\beta\mu\geq 1,$ for $r=3$) is established in  \cite{FHR,HR,PAM,MTM}, etc. Likewise the 3D NSE, the global existence and uniqueness of strong solutions for the 3D SCBF equations \eqref{1} with $r\in[1,3)$ is still an open problem. Therefore, in this work, we consider the cases $r>3$, for any $\mu$ and $\beta$, and  $r=3,$  for $2\beta\mu\geq1$ only.

	In the first part of this work, we prove the existence of a global attractor for the 3D SCBF equations \eqref{1}. The following estimate plays a key role in obtaining such a result  (cf. \cite{HR}). 
	\begin{align}\label{3}
	&	\int_{\mathbb{T}^3}(-\Delta\u(x))\cdot|\u(x)|^{r-1}\u(x)d x\nonumber\\&=\int_{\mathbb{T}^3}|\nabla\u(x)|^2|\u(x)|^{r-1}\d x+4\left[\frac{r-1}{(r+1)^2}\right]\int_{\mathbb{T}^3}|\nabla|\u(x)|^{\frac{r+1}{2}}|^2\d x.
	\end{align}
	Note that in the case of bounded domains  $\mathcal{P}(|\u|^{r-1}\u)$ ($\mathcal{P}$ is the Helmholtz-Hodge projection) need not be zero on the boundary, and $\mathcal{P}$ and $-\Delta$ are not necessarily commuting (for a counter example, see Example 2.19, \cite{RRS}). Thus the equality \eqref{3} may not be useful in the context of bounded domains and we restrict ourselves to periodic domains in this work. 	In the case of two dimensional bounded and unbounded domains, the existence of global attractors and their properties for the  CBF equations \eqref{1} have been discussed in \cite{MTM2,MTM3}, etc. 
	
	In the second part of the work, we consider the stochastic convective Brinkman-Forchheimer  (SCBF)  equations and discuss about the long time behavior of its solution and stability results. The 3D SCBF equations in periodic domains are given by 
	\begin{equation}\label{2}
	\left\{
	\begin{aligned}
	\d\u+[-\mu \Delta\u+(\u\cdot\nabla)\u+\alpha\u+\beta|\u|^{r-1}\u+\nabla p]\d t&=\mathbf{f}\d t+\e\d\W(t), \ \text{ in } \ \mathbb{T}^3\times(0,\infty), \\ \nabla\cdot\u&=0, \ \text{ in } \ \mathbb{T}^3\times(0,\infty), \\
	\u(0)&=\x, \ \text{ in } \ \mathbb{T}^3,
	\end{aligned}
	\right.
	\end{equation}
	where	$\text W(t), \ t\in \R,$ is a two-sided cylindrical Wiener process  with its Reproducing Kernel Hilbert Space (RKHS)  $\K$ satisfying certain assumptions (see subsection \ref{sub4} below).   Using monotonicity as well as hemicontinuity properties of linear and nonlinear operators, and a stochastic generalization of the Minty-Browder technique, the existence and uniqueness of pathwise strong solutions (in the probabilistic sense) of the system \eqref{2} is established in \cite{MTM1}. The random attractors and their stability results for the 2D SCBF equations in bounded and unbounded domains like Poincar\'e domains is discussed in \cite{KM, KM1,KM2}, etc. 	The existence of a random attractor in $\V$ for the 3D damped Navier-Stokes equations perturbed by additive noise  with $3<r\leq 5$ in bounded domains is established in \cite{You} by using the equality \eqref{3}. Due to the technical difficulty discussed earlier, it appears to us that the results obtained in \cite{You} are only valid in periodic domains.

	Firstly, we establish the existence of a random attractor in $\H$ for the system \eqref{2} for $r\geq 3$ ($2\beta\mu\geq 1,$ for $r=3$). Then, we prove one important property of the random attractors, namely \emph{the upper semicontinuity} of random  attractors, which was introduced in \cite{CLR}.  Roughly speaking, the upper semicontinuity of random  attractors means that, if $\mathcal{A}$ is a global attractor for the deterministic system and $\mathcal{A}_\varepsilon$ is a random attractor for the corresponding stochastic system perturbed by a small noise, we say that these attractors have the property of upper semicontinuity if $$\lim\limits_{\varepsilon\to 0}d(\mathcal{A}_\varepsilon, \mathcal{A})=0,$$ where $d$ is the Hausdorff semidistance given by $d(A, B)=\sup\limits_{y\in A}\inf\limits_{z\in B}\rho(y,z),$ for any $A, B\subset X,$ on a Polish space $(X,\rho)$. The upper semicontinuity of random attractors for the 2D SNSE and stochastic reaction-diffusion equations is established in \cite{CLR}. The existence and  upper semicontinuity of random attractors for several stochastic models are available in \cite{XJXD,KM1,LCL,WLYJ}, etc. In particular, upper semicontinuity results for the 2D SCBF equations in bounded domains is discussed in \cite{KM1}. We prove the upper semicontinuity of random attractors for the 3D SCBF equations \eqref{2} by using the similar techniques given in \cite{CLR}.
	
	The organization of the paper is as follows. In section \ref{sec2}, we provide the functional framework needed for this work. Also, we define linear and nonlinear operators, and discuss about their properties. After providing an abstract formulation of the system \eqref{1}, we discuss about the global solvability results of the system \eqref{1} in the same section. The section \ref{sec3} is devoted for establishing the existence of global attractors for the deterministic CBF equations \eqref{1} with  $r\geq 3$ ($2\beta\mu\geq 1,$ for $r=3$) (Theorem \ref{d-ab}). In section \ref{sec4}, we first give the abstract formulation of the 3D SCBF equations \eqref{2}, the assumption on RKHS $K$ and the existence and uniqueness results of the system \eqref{2}. Then, we define the metric dynamical system (MDS) and random dynamical system (RDS) for the 3D SCBF equations \eqref{1}. We prove the existence of random attractors for the  3D SCBF equations \eqref{2} in the same section (Theorem \ref{Main_Theoem}). The upper semicontinuity of random attractors for the 3D SCBF equations \eqref{2} using the theory given in \cite{CLR} is discussed in the final section (Theorem \ref{USC}).

	\section{Mathematical Formulation}\label{sec2}\setcounter{equation}{0}
	In this section, we provide the necessary function spaces needed to obtain the existence of global attractors and random attractors for the equations \eqref{1} and \eqref{2}, respectively. We consider the problem \eqref{1} on a three dimensional torus $\mathbb{T}^3=[0,L]^3$,  with the periodic boundary conditions and zero-mean value constraint for the functions, that is, $\int_{\mathbb{T}^3}\u(x)\d x=0$. Since, $\alpha$ does not play a major role in our analysis, we will put $\alpha=0$ in \eqref{1} and \eqref{2} for further analysis.
	\subsection{Function spaces} Let \ $\dot{\C}_p^{\infty}(\mathbb{T}^3;\R^3)$ denote the space of all infinitely\ differentiable  functions ($\mathbb{R}^3$-valued) such that $\int_{\mathbb{T}^3}\u(x)\d x=0$ and $\u(x+\mathrm{L}e_i)=\u(x),$ for
	every $x\in\R^3$ and $i=1,2,3$, where
	$\{e_1,e_2,e_3\}$ is the canonical basis
	of $\R^3$. The Sobolev space  $\dot{\H}_p^k(\mathbb{T}^3):=\dot{\mathrm{H}}_p^k(\mathbb{T}^3;\mathbb{R}^3)$ is the completion of $\dot{\C}_p^{\infty}(\mathbb{T}^3;\R^3)$  with respect to the $\H^s$ norm $$\|\u\|_{\dot{\H}^s_p}:=\left(\sum_{0\leq|\alpha|\leq s}\|\D^{\alpha}\u\|_{\mathbb{L}^2(\mathcal{O})}^2\right)^{1/2}.$$ The Sobolev space of periodic functions with zero mean $\dot{\H}_p^k(\mathbb{T}^3)$ is the same as (Proposition 5.39, \cite{Robinson1}) $$\left\{\u:\u=\sum_{\k\in\mathbb{Z}^3}\u_\k e^{2\pi \k\cdot\x /  L},\u_0=\mathbf{0},\ \bar{\u}_\k=\u_{-\k},\ \|\u\|_{\dot{\H}^s_f}:=\sum_{k\in\mathbb{Z}^m}|\k|^{2s}|\u_\k|^2<\infty\right\}.$$ From Proposition 5.38, \cite{Robinson1}, we infer that the norms $\|\cdot\|_{\dot{\H}^s_p}$ and $\|\cdot\|_{\dot{\H}^s_f}$ are equivalent. Let us define 
	\begin{align*} 
	\mathcal{V}&:=\{\u\in\dot{\C}_p^{\infty}(\mathbb{T}^3;\R^3):\nabla\cdot\u=0\},\\
	\mathbb{H}&:=\text{the closure of }\ \mathcal{V} \ \text{ in the Lebesgue space } \L^2(\mathbb{T}^3)=\mathrm{L}^2(\mathbb{T}^3;\R^3),\\
	\mathbb{V}&:=\text{the closure of }\ \mathcal{V} \ \text{ in the Sobolev space } \H^1(\mathbb{T}^3)=\mathrm{H}^1(\mathbb{T}^3;\R^3),\\
	\widetilde{\L}^{p}&:=\text{the closure of }\ \mathcal{V} \ \text{ in the Lebesgue space } \L^p(\mathbb{T}^3)=\mathrm{L}^p(\mathbb{T}^3;\R^3),
	\end{align*}
	for $p\in(2,\infty)$. The zero mean condition provides the \emph{Poincar\'{e} inequality}, \begin{align}\label{poin}
	\lambda_1\|\u\|_{\mathbb{H}}^2\leq\|\u\|^2_{\V},
	\end{align} where $\lambda_1=\frac{2\pi}{L}$ (Lemma 5.40, \cite{Robinson1}). Then, we characterize the spaces $\H$, $\V$ and $\widetilde{\L}^p$   with the norms  $$\|\u\|_{\H}^2:=\int_{\mathbb{T}^3}|\u(x)|^2\d x,\quad \|\u\|_{\V}^2:=\int_{\mathbb{T}^3}|\nabla\u(x)|^2\d x \ \text{ and }\ \|\u\|_{\widetilde{\L}^p}^p=\int_{\mathbb{T}^3}|\u(x)|^p\d x,$$ respectively. 
	Let $(\cdot,\cdot)$ denote the inner product in the Hilbert space $\H$ and $\langle \cdot,\cdot\rangle $ represent the induced duality between the spaces $\V$  and its dual $\V'$ as well as $\widetilde{\L}^p$ and its dual $\widetilde{\L}^{p'}$, where $\frac{1}{p}+\frac{1}{p'}=1$. Note that $\H$ can be identified with its dual $\H'$. We endow the space $\V\cap\widetilde{\L}^{p}$ with the norm $\|\u\|_{\V}+\|\u\|_{\widetilde{\L}^{p}},$ for $\u\in\V\cap\widetilde{\L}^p$ and its dual $\V'+\widetilde{\L}^{p'}$ with the norm $$\inf\left\{\max\left(\|\v_1\|_{\V'},\|\v_1\|_{\widetilde{\L}^{p'}}\right):\v=\v_1+\v_2, \ \v_1\in\V', \ \v_2\in\widetilde{\L}^{p'}\right\}.$$ Moreover, we have the continuous embedding $\V\cap\widetilde{\L}^p\hookrightarrow\H\hookrightarrow\V'+\widetilde{\L}^{p'}$. We use the following interpolation inequality in the sequel. 
	Assume $1\leq s_1\leq s\leq s_2\leq \infty$, $\theta\in(0,1)$ such that $\frac{1}{s}=\frac{\theta}{s_1}+\frac{1-\theta}{s_2}$ and $\u\in\L^{s_1}(\mathbb{T}^3)\cap\L^{s_2}(\mathbb{T}^3)$, then we have 
	\begin{align}\label{211}
	\|\u\|_{\L^s}\leq\|\u\|_{\L^{s_1}}^{\theta}\|\u\|_{\L^{s_2}}^{1-\theta}. 
	\end{align}
	
	\subsection{Linear operator}
	Let $\mathcal{P}: \dot{\L}^2(\mathbb{T}^3) \to\H$ denote the Helmholtz-Hodge (or Leray) projection (section 2.1, \cite{RRS}).  We define the Stokes operator 
	\begin{equation*}
	\A\u:=-\mathcal{P}\Delta\u,\;\u\in\D(\A):=\V\cap\dot{\H}^{2}_p(\mathbb{T}^3).
	\end{equation*}
	Note that $\D(\A)$ can also be written as $\D(\A)=\big\{\u\in\dot{\H}^{2}_p(\mathbb{T}^3):\nabla\cdot\u=0\big\}$.  It should be noted that $\mathcal{P}$ and $\Delta$ commutes in periodic domains (Lemma 2.9, \cite{RRS}). For the Fourier expansion $\u=\sum\limits_{\k\in\mathbb{Z}^3} e^{2\pi \k\cdot\x /  L}\u_\k,$ one obtains $$-\Delta\u=\frac{4\pi^2}{L^2}\sum_{\k\in\mathbb{Z}^3} e^{2\pi \k\cdot\x /  L}|\k|^2\u_\k.$$ It is easy to observe that $\D(\A^{s/2})=\big\{\u\in \dot{\H}^{s}_p(\mathbb{T}^3):\nabla\cdot\u=0\big\}$ and $\|\A^{s/2}\u\|_{\H}=C\|\u\|_{\dot{\H}^{s}_p},$ for all $\u\in\D(\A^{s/2})$, $s\geq 0$ (cf. \cite{Robinson1}). Note that the operator $\A$ is a non-negative self-adjoint operator in $\H$ with a compact resolvent and   \begin{align}\label{2.7a}\langle \A\u,\u\rangle =\|\u\|_{\V}^2,\ \textrm{ for all }\ \u\in\V, \ \text{ so that }\ \|\A\u\|_{\V'}\leq \|\u\|_{\V}.\end{align}
	Since $\A^{-1}$ is a compact self-adjoint operator in $\H$, we obtain a complete family of orthonormal eigenfunctions  $\{w_i\}_{i=1}^{\infty}\subset\dot{\C}_p^{\infty}(\mathbb{T}^3;\R^3)$ such that $\A w_i=\lambda_i w_i$, for $i=1,2,\ldots$ and  $0<\lambda_1\leq \lambda_2\leq \ldots\to\infty$ are the eigenvalues of $\A$. Note that $\lambda_1=\frac{2\pi}{L}$ is the smallest eigenvalue of $\A$ appearing in the Poincar\'e inequality \eqref{poin}.

	\subsection{Bilinear operator}
	Let us define the \emph{trilinear form} $b(\cdot,\cdot,\cdot):\V\times\V\times\V\to\R$ by $$b(\u,\v,\w)=\int_{\mathbb{T}^3}(\u(x)\cdot\nabla)\v(x)\cdot\w(x)\d x=\sum_{i,j=1}^3\int_{\mathbb{T}^3}\u_i(x)\frac{\partial \v_j(x)}{\partial x_i}\w_j(x)\d x.$$ If $\u, \v$ are such that the linear map $b(\u, \v, \cdot) $ is continuous on $\V$, the corresponding element of $\V'$ is denoted by $\B(\u, \v)$. We also denote $\B(\u) = \B(\u, \u)=\mathcal{P}(\u\cdot\nabla)\u$.
	An integration by parts gives 
	\begin{equation}\label{b0}
	\left\{
	\begin{aligned}
	b(\u,\v,\w) &=  -b(\u,\w,\v),\ \text{ for all }\ \u,\v,\w\in \V,\\
	b(\u,\v,\v) &= 0,\ \text{ for all }\ \u,\v \in\V.
	\end{aligned}
	\right.\end{equation}
	\begin{remark}
		The following well-known inequality is due to Ladyzhenskaya (Lemma 2, Chapter I, \cite{OAL}):
		\begin{align}\label{lady}
		\|\u\|_{\L^{4} (\mathbb{T}^3)} \leq 2^{1/2} \|\u\|^{1/4}_{\L^{2} (\mathbb{T}^3)} \|\nabla \u\|^{3/4}_{\L^{2} (\mathbb{T}^3)}, \ \ \ \u\in \H^{1} (\mathbb{T}^3).
		\end{align}
	\end{remark}
	\begin{remark}
		In the trilinear form, an application of H\"older's inequality yields
		\begin{align*}
		|b(\u_1,\u_2,\u_3)|=|b(\u_1,\u_3,\u_2)|\leq \|\u_1\|_{\widetilde{\L}^{r+1}}\|\u_2\|_{\widetilde{\L}^{\frac{2(r+1)}{r-1}}}\|\u_3\|_{\V},
		\end{align*}
		for all $\u_1\in\V\cap\widetilde{\L}^{r+1}$, $\u_2\in\V\cap\widetilde{\L}^{\frac{2(r+1)}{r-1}}$ and $\u_3\in\V$, so that we get 
		\begin{align}\label{2p9}
		\|\B(\u_1,\u_2)\|_{\V'}\leq \|\u_1\|_{\widetilde{\L}^{r+1}}\|\u_2\|_{\widetilde{\L}^{\frac{2(r+1)}{r-1}}}.
		\end{align}
		Hence, the trilinear map $b : \V\times\V\times\V\to \R$ has a unique extension to a bounded trilinear map from $(\V\cap\widetilde{\L}^{r+1})\times(\V\cap\widetilde{\L}^{\frac{2(r+1)}{r-1}})\times\V$ to $\R$. It can also be seen that $\B$ maps $ \V\cap\widetilde{\L}^{r+1}$  into $\V'+\widetilde{\L}^{\frac{r+1}{r}}$ and using interpolation inequality (see \eqref{211}), we get 
		\begin{align}\label{212}
		\left|\langle \B(\u_1),\u_2\rangle \right|=\left|b(\u_1,\u_2,\u_1)\right|\leq \|\u_1\|_{\widetilde{\L}^{r+1}}\|\u_1\|_{\widetilde{\L}^{\frac{2(r+1)}{r-1}}}\|\u_2\|_{\V}\leq\|\u_1\|_{\widetilde{\L}^{r+1}}^{\frac{r+1}{r-1}}\|\u_1\|_{\H}^{\frac{r-3}{r-1}}\|\u_2\|_{\V},
		\end{align}
		for $r\geq 3$ and all $\u_2\in\V\cap\widetilde{\L}^{r+1}$. Thus, we have 
		\begin{align}\label{2.9a}
		\|\B(\u_1)\|_{\V'+\widetilde{\L}^{\frac{r+1}{r}}}\leq\|\u_1\|_{\widetilde{\L}^{r+1}}^{\frac{r+1}{r-1}}\|\u_1\|_{\H}^{\frac{r-3}{r-1}}.
		\end{align}
	\end{remark}
	\begin{remark}
		Note that $\langle\B(\u_1,\u_1-\u_2),\u_1-\u_2\rangle=0$ and it implies that
		\begin{equation}\label{441}
		\begin{aligned}
		\langle \B(\u_1)-\B(\u_2),\u_1-\u_2\rangle =\langle\B(\u_1-\u_2,\u_2),\u_1-\u_2\rangle=-\langle\B(\u_1-\u_2,\u_1-\u_2),\u_2\rangle.
		\end{aligned}
		\end{equation} 
		Using H\"older's and Young's inequalities, we estimate $|\langle\B(\u_1-\u_2,\u_1-\u_2),\u_2\rangle|$ as  
		\begin{align}\label{2p28}
		|\langle\B(\u_1-\u_2,\u_1-\u_2),\u_2\rangle|&\leq\|\u_1-\u_2\|_{\V}\|\u_2(\u_1-\u_2)\|_{\H}\nonumber\\&\leq\frac{\mu }{4}\|\u_1-\u_2\|_{\V}^2+\frac{1}{\mu }\|\u_2(\u_1-\u_2)\|_{\H}^2.
		\end{align}
		We take the term $\|\u_2(\u_1-\u_2)\|_{\H}^2$ from \eqref{2p28} and use H\"older's and Young's inequalities to estimate it as (see \cite{MTM1} also)
		\begin{align}\label{2.29}
		&\int_{\mathbb{T}^3}|\u_2(x)|^2|\u_1(x)-\u_2(x)|^2\d x\nonumber\\&=\int_{\mathbb{T}^3}|\u_2(x)|^2|\u_1(x)-\u_2(x)|^{\frac{4}{r-1}}|\u_1(x)-\u_2(x)|^{\frac{2(r-3)}{r-1}}\d x\nonumber\\&\leq\left(\int_{\mathbb{T}^3}|\u_2(x)|^{r-1}|\u_1(x)-\u_2(x)|^2\d x\right)^{\frac{2}{r-1}}\left(\int_{\mathbb{T}^3}|\u_1(x)-\u_2(x)|^2\d x\right)^{\frac{r-3}{r-1}}\nonumber\\&\leq{\frac{\beta\mu}{8} }\left(\int_{\mathbb{T}^3}|\u_2(x)|^{r-1}|\u_1(x)-\u_2(x)|^2\d x\right)+\frac{r-3}{r-1}\left(\frac{16}{\beta\mu (r-1)}\right)^{\frac{2}{r-3}}\left(\int_{\mathbb{T}^3}|\u_1(x)-\u_2(x)|^2\d x\right),
		\end{align}
		for $r>3$. Using \eqref{2.29} in \eqref{2p28}, we find 
		\begin{align}\label{2.16}
		&|\langle\B(\u_1-\u_2,\u_1-\u_2),\u_2\rangle|\leq\frac{\mu }{4}\|\u_1-\u_2\|_{\V}^2+\frac{\beta}{8}\||\u_2|^{\frac{r-1}{2}}(\u_1-\u_2)\|_{\H}^2+\eta_1\|\u_1-\u_2\|_{\H}^2,
		\end{align}
		where \begin{align}\label{215}\eta_1=\frac{r-3}{\mu(r-1)}\left(\frac{16}{\beta\mu (r-1)}\right)^{\frac{2}{r-3}}.\end{align}
	\end{remark}
	\subsection{Nonlinear operator}
	Let us now consider the operator $\mathcal{C}(\u):=\mathcal{P}(|\u|^{r-1}\u)$. It is immediate that $\langle\mathcal{C}(\u),\u\rangle =\|\u\|_{\widetilde{\L}^{r+1}}^{r+1}$ and the map $\mathcal{C}(\cdot):\widetilde{\L}^{r+1}\to\widetilde{\L}^{\frac{r+1}{r}}$. For all $\u\in\wi\L^{r+1}$, the map is Gateaux differentiable with Gateaux derivative 
	\begin{align}\label{29}
	\mathcal{C}'(\u)\v&=\left\{\begin{array}{cc}\mathcal{P}(\v),&\text{ for }r=1,\\ \left\{\begin{array}{cc}\mathcal{P}(|\u|\v)+\mathcal{P}\left(\frac{\u}{|\u|}(\u\cdot\v)\right),&\text{ if }\u\neq \mathbf{0},\\\mathbf{0},&\text{ if }\u=\mathbf{0},\end{array}\right.&\text{ for } r=2,\\ \mathcal{P}(|\u|^{r-1}\v)+(r-1)\mathcal{P}(\u|\u|^{r-3}(\u\cdot\v)), &\text{ for }r\geq 3,\end{array}\right.
	\end{align}
	for all $\v\in\widetilde{\L}^{r+1}$. For any $r\in [1, \infty)$ and $\u_1, \u_2 \in \V\cap\widetilde{\L}^{r+1}$, we have (see subsection 2.4, \cite{MTM1}),
	\begin{align}\label{MO_c}
	\langle\mathcal{C}(\u_1)-\mathcal{C}(\u_2),\u_1-\u_2\rangle \geq\frac{1}{2}\||\u_1|^{\frac{r-1}{2}}(\u_1-\u_2)\|_{\H}^2+\frac{1}{2}\||\u_2|^{\frac{r-1}{2}}(\u_1-\u_2)\|_{\H}^2\geq 0,
	\end{align}
	and 
	\begin{align}\label{a215}
	\|\u-\v\|_{\wi\L^{r+1}}^{r+1}\leq 2^{r-2}\||\u|^{\frac{r-1}{2}}(\u-\v)\|_{\H}^2+2^{r-2}\||\v|^{\frac{r-1}{2}}(\u-\v)\|_{\H}^2,
	\end{align}
	for $r\geq 1$ (replace $2^{r-2}$ with $1,$ for $1\leq r\leq 2$).
	\begin{theorem}[Theorem 2.2, \cite{MTM1}]\label{thm2.4}
		Let $ r> 3$ and $\u_1, \u_2 \in \V\cap\widetilde{\L}^{r+1}.$ Then, for the operator $\G(\u) = \mu\A\u +\B(\u)+\beta\mathcal{C}(\u),$ we have
		\begin{align}\label{fe4}
		\langle\G(\u_1)-\G(\u_2),\u_1-\u_2\rangle+ \eta_2\|\u_2-\u_2\|_{\H}^2&\geq 0,
		\end{align}
		where \begin{align}\label{216}\eta_2=\frac{r-3}{2\mu(r-1)}\left(\frac{2}{\beta\mu (r-1)}\right)^{\frac{2}{r-3}}.\end{align}
	\end{theorem}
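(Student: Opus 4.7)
The plan is to split $\langle \G(\u_1)-\G(\u_2),\u_1-\u_2\rangle$ into its three constituent pieces (linear, bilinear, nonlinear), treat each separately, and then recombine with carefully tuned constants so the remaining negative term produces exactly the coefficient $\eta_2$. The linear part is immediate: by \eqref{2.7a},
\[
\mu\langle \A(\u_1-\u_2),\u_1-\u_2\rangle=\mu\|\u_1-\u_2\|_{\V}^2.
\]
The nonlinear part is handled by the monotonicity estimate \eqref{MO_c}, giving
\[
\beta\langle \mathcal{C}(\u_1)-\mathcal{C}(\u_2),\u_1-\u_2\rangle\geq \frac{\beta}{2}\||\u_1|^{\frac{r-1}{2}}(\u_1-\u_2)\|_{\H}^2+\frac{\beta}{2}\||\u_2|^{\frac{r-1}{2}}(\u_1-\u_2)\|_{\H}^2.
\]

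The delicate step is the bilinear term, which I would handle as in \eqref{2p28}--\eqref{2.16} but with sharper Young-parameter choices. Using the identity \eqref{441} followed by Cauchy--Schwarz and Young's inequality, I bound
\[
|\langle \B(\u_1-\u_2,\u_1-\u_2),\u_2\rangle|\leq \frac{\mu}{2}\|\u_1-\u_2\|_{\V}^2+\frac{1}{2\mu}\|\u_2(\u_1-\u_2)\|_{\H}^2.
\]
To estimate $\|\u_2(\u_1-\u_2)\|_{\H}^2$, I split $|\u_1-\u_2|^2=|\u_1-\u_2|^{4/(r-1)}|\u_1-\u_2|^{2(r-3)/(r-1)}$ and apply Hölder's inequality with conjugate exponents $\frac{r-1}{2}$ and $\frac{r-1}{r-3}$, followed by Young's inequality with a free parameter $\delta>0$:
\[
\|\u_2(\u_1-\u_2)\|_{\H}^2\leq \frac{2\delta^{(r-1)/2}}{r-1}\||\u_2|^{\frac{r-1}{2}}(\u_1-\u_2)\|_{\H}^2+\frac{r-3}{r-1}\delta^{-(r-1)/(r-3)}\|\u_1-\u_2\|_{\H}^2.
\]
The choice that closes the argument is the one that makes the first term above exactly cancel the $\frac{\beta}{2}\||\u_2|^{(r-1)/2}(\u_1-\u_2)\|_{\H}^2$ coming from the nonlinear part, i.e.\ $\delta^{(r-1)/2}=\frac{\beta\mu(r-1)}{2}$. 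The main (if only) obstacle is verifying that this $\delta$ produces precisely the constant $\eta_2$ stated in \eqref{216}: a direct substitution gives $\delta^{-(r-1)/(r-3)}=\bigl(\tfrac{2}{\beta\mu(r-1)}\bigr)^{2/(r-3)}$, and then dividing by $2\mu$ yields exactly $\eta_2$.

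Assembling the three pieces, the bilinear negative contribution $-\langle\B(\u_1-\u_2,\u_1-\u_2),\u_2\rangle$ is dominated by $\frac{\mu}{2}\|\u_1-\u_2\|_{\V}^2+\frac{\beta}{2}\||\u_2|^{(r-1)/2}(\u_1-\u_2)\|_{\H}^2+\eta_2\|\u_1-\u_2\|_{\H}^2$, and these first two summands are absorbed by the linear part and half of the $\mathcal{C}$-contribution. What survives is
\[
\langle \G(\u_1)-\G(\u_2),\u_1-\u_2\rangle\geq \frac{\mu}{2}\|\u_1-\u_2\|_{\V}^2+\frac{\beta}{2}\||\u_1|^{\frac{r-1}{2}}(\u_1-\u_2)\|_{\H}^2-\eta_2\|\u_1-\u_2\|_{\H}^2,
\]
which immediately implies \eqref{fe4} upon dropping the nonnegative first two terms. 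The hypothesis $r>3$ is used exactly where the Hölder exponent $\frac{r-1}{r-3}$ appears; the case $r=3$ is genuinely excluded here, which is why that borderline case requires the separate hypothesis $2\beta\mu\geq 1$ elsewhere in the paper.
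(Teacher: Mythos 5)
Your proof is correct, and it is essentially the argument the paper relies on: the theorem is only cited here (from Theorem 2.2 of \cite{MTM1}), but the same decomposition and H\"older--Young splitting appear in the paper's own Remark 2.3 (estimates \eqref{2p28}--\eqref{2.16}), where less aggressive Young parameters produce the larger constant $\eta_1$; your choice $\frac{\mu}{2}a^2+\frac{1}{2\mu}b^2$ together with $\delta^{(r-1)/2}=\frac{\beta\mu(r-1)}{2}$ is exactly what is needed to consume the full $\frac{\beta}{2}\||\u_2|^{\frac{r-1}{2}}(\u_1-\u_2)\|_{\H}^2$ from \eqref{MO_c} and land on $\eta_2$ as in \eqref{216}. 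Note also that you have implicitly (and correctly) read the misprint $\|\u_2-\u_2\|_{\H}^2$ in \eqref{fe4} as $\|\u_1-\u_2\|_{\H}^2$.
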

	\begin{theorem}[Theorem 2.3, \cite{MTM1}]\label{thm2.3}
		For $r=3$ with $2\beta\mu \geq 1$, the operator $\G(\cdot):\V\cap\widetilde{\L}^{r+1}\to \V'+\widetilde{\L}^{\frac{r+1}{r}}$ is globally monotone, that is, for all $\u_1,\u_2\in\V$, we have 
		\begin{align}\label{fe5}\langle\G(\u_1)-\G(\u_2),\u_1-\u_2\rangle\geq 0.\end{align}
	\end{theorem}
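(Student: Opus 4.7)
The plan is to decompose $\langle\G(\u_1)-\G(\u_2),\u_1-\u_2\rangle$ into its three natural pieces coming from the Stokes operator, the bilinear convection, and the Forchheimer nonlinearity, and then to balance a single application of Young's inequality so that the $\V$-coercivity supplied by $\mu\A$ and the $\|\u_2(\u_1-\u_2)\|_{\H}^2$-term supplied by $\beta\mathcal{C}$ conspire to cover the bilinear error exactly on the threshold $2\beta\mu\ge 1$.

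More precisely, I would first note that by \eqref{2.7a} the Stokes piece contributes $\mu\|\u_1-\u_2\|_{\V}^2$. Next, the Forchheimer piece, by the monotonicity inequality \eqref{MO_c} specialized to $r=3$ (so that $(r-1)/2=1$), gives
\begin{align*}
\beta\langle\mathcal{C}(\u_1)-\mathcal{C}(\u_2),\u_1-\u_2\rangle
\;\ge\;\frac{\beta}{2}\|\u_1(\u_1-\u_2)\|_{\H}^2+\frac{\beta}{2}\|\u_2(\u_1-\u_2)\|_{\H}^2.
\end{align*}
For the bilinear piece I would use the identity \eqref{441} to write $\langle\B(\u_1)-\B(\u_2),\u_1-\u_2\rangle=-\langle\B(\u_1-\u_2,\u_1-\u_2),\u_2\rangle$, and then apply $b(\u,\v,\w)=-b(\u,\w,\v)$ together with Cauchy--Schwarz to get the simple bound
\begin{align*}
|\langle\B(\u_1)-\B(\u_2),\u_1-\u_2\rangle|\;\le\;\|\u_1-\u_2\|_{\V}\,\|\u_2(\u_1-\u_2)\|_{\H}.
\end{align*}

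The critical step is the choice of Young's parameter. I would apply $ab\le\mu a^2+\tfrac{1}{4\mu}b^2$ (the full $\mu$ is pushed onto the $\V$-norm), which gives
\begin{align*}
|\langle\B(\u_1)-\B(\u_2),\u_1-\u_2\rangle|\;\le\;\mu\|\u_1-\u_2\|_{\V}^2+\frac{1}{4\mu}\|\u_2(\u_1-\u_2)\|_{\H}^2.
\end{align*}
Adding the three contributions, the $\mu\|\u_1-\u_2\|_{\V}^2$ terms cancel exactly and one is left with
\begin{align*}
\langle\G(\u_1)-\G(\u_2),\u_1-\u_2\rangle
\;\ge\;\frac{\beta}{2}\|\u_1(\u_1-\u_2)\|_{\H}^2+\left(\frac{\beta}{2}-\frac{1}{4\mu}\right)\|\u_2(\u_1-\u_2)\|_{\H}^2,
\end{align*}
and the coefficient $\tfrac{\beta}{2}-\tfrac{1}{4\mu}$ is nonnegative precisely when $2\beta\mu\ge 1$, which is the hypothesis. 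This yields the required global monotonicity.

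The only subtle point — and the reason I flag it as the main obstacle — is picking the weight in Young's inequality. The symmetric choice $ab\le\tfrac{\mu}{2}a^2+\tfrac{1}{2\mu}b^2$ would force the stronger condition $\beta\mu\ge 1$; one has to burn all of the available $\V$-coercivity on the convection estimate so that the leftover $\tfrac{1}{4\mu}\|\u_2(\u_1-\u_2)\|_{\H}^2$ is absorbable by only \emph{half} of the $\beta\mathcal{C}$ quadratic output, matching the borderline hypothesis $2\beta\mu\ge 1$. The integrability required to make these quantities finite is already ensured since in the 3D periodic setting $\V\hookrightarrow\widetilde{\L}^4=\widetilde{\L}^{r+1}$ by Sobolev embedding, so all pairings above are finite for $\u_1,\u_2\in\V$.
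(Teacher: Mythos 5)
Your proof is correct and is exactly the argument behind the cited result: split $\G$ into the Stokes, convective, and Forchheimer parts, use \eqref{2.7a}, \eqref{441} with the Cauchy--Schwarz bound \eqref{2p28}, and \eqref{MO_c} at $r=3$, then apply Young's inequality with the weight $ab\le\mu a^2+\tfrac{1}{4\mu}b^2$ so that the surviving coefficient $\tfrac{\beta}{2}-\tfrac{1}{4\mu}$ is nonnegative precisely under $2\beta\mu\ge1$. The paper itself only cites Theorem 2.3 of \cite{MTM1} without reproducing the proof, and your argument (including the observation that $\V\hookrightarrow\widetilde{\L}^4$ in 3D makes all pairings finite) matches that proof.
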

	\section{Global attractors for 3D deterministic CBF equations}\label{sec3}\setcounter{equation}{0}
	In this section, we obtain the existence of global attractors for the 3D deterministic CBF equations \eqref{1} in periodic domains. We make use of the equality \eqref{3} to obtain the existence of an absorbing set in $\V$. 
	\subsection{Deterministic CBF equations}
	In this subsection, we provide the abstract formulation of deterministic CBF equation \eqref{1} and discuss about the existence and uniqueness of weak  solutions. 
	\subsubsection{Abstract formulation}
	On taking orthogonal projection $\mathcal{P}$ onto the first equation in \eqref{1}, we obtain 
	\begin{equation}\label{D-CBF}
	\left\{
	\begin{aligned}
	\frac{\d\u(t)}{\d t}+\mu \A\u(t)+\B(\u(t))+\beta \mathcal{C}(\u(t))&=\f , \ \ \ t\geq 0, \\ 
	\u(0)&=\x,
	\end{aligned}
	\right.
	\end{equation}
	where $\x\in \H \text{ and } \f\in \H$. Now, we shall give the definition of weak solution of the system \eqref{D-CBF} for $r\geq3$ and discuss about global solvability results. 
	\begin{definition}\label{def3.1}
		Let us assume that $\x\in \H$ and $\f\in \H$. Let $T>0$ be any fixed time. Then, the function $\u(\cdot)$ is called a \emph{Leray-Hopf weak solution} of the problem \eqref{D-CBF} on time interval $[0, T]$, if $$\u\in \mathrm{L}^{\infty}(0,T;\H)\cap\mathrm{L}^2(0, T;\V)\cap\mathrm{L}^{r+1}(0,T;\widetilde{\L}^{r+1}),$$ with $ \partial_t\u\in \mathrm{L}^{2}(0,T;\V')+\mathrm{L}^{\frac{r+1}{r}}(0,T;\widetilde{\L}^{\frac{r+1}{r}})$ satisfying: 
		\begin{enumerate}
			\item [(i)]  	for any $\psi\in \V\cap\widetilde{\L}^{r+1},$ 	\begin{align}
			\bigg\langle\frac{\d\u(t)}{\d t}, \psi\bigg\rangle =  - \big\langle \mu \A\u(t)+\B(\u(t))+\beta \mathcal{C}(\u(s)) , \psi\big\rangle +  ( \f, \psi),
			\end{align}
			\item [(ii)] the initial data is satisfied in the following sense: 
			$$\lim\limits_{t\downarrow 0}\int_{\mathcal{O}}\u(t,\x)\psi(x)\d x=\int_{\mathcal{O}}\x(x)\psi(x)\d x, $$ for all $\psi\in\H$.  
		\end{enumerate}
		
	\end{definition}
	By using the monotonicity property (Theorems \ref{thm2.4} and \ref{thm2.3}) as well as Minty-Browder technique, the following global solvability result is established in Theorems 3.4 and 3.5, \cite{MTM} (see \cite{SNA,HR} also). 
	\begin{theorem}[\cite{MTM}]\label{D-Sol}
		For $r\geq 3$ ($r>3$, for any $\mu$ and $\beta$, and $r=3$ for $2\beta\mu\geq1$), let $\x \in \H$ and $\f\in \H$ be given. Then there exists a unique Leray-Hopf weak solution $\u(\cdot)$ to the system \eqref{D-CBF} in the sense of Definition \ref{def3.1}. 
		
		Moreover, the solution $\u\in\C([0,T];\H)$ and satisfies  the following energy equality: 
		\begin{align}\label{33}
		\|\u(t)\|_{\H}^2+2\mu\int_0^t\|\u(s)\|_{\V}^2\d s+2\beta\int_0^t\|\u(s)\|_{\wi\L^{r+1}}^{r+1}\d s=\|\x\|_{\H}^2+2\int_0^t(\f,\u(s))\d s,
		\end{align}
		for all $t\in[0,T]$. 
	\end{theorem}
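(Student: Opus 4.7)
The plan is the standard Faedo--Galerkin scheme closed via a Minty--Browder argument, leveraging the monotonicity from Theorems \ref{thm2.4}--\ref{thm2.3}. Set $\H_n:=\mathrm{span}\{w_1,\dots,w_n\}$ with $P_n$ the orthogonal projection onto $\H_n$. First I would produce a Galerkin solution $\u^n(t)\in \H_n$ of the projected ODE
\begin{equation*}
\frac{\d\u^n}{\d t}+\mu\A\u^n+P_n\B(\u^n)+\beta P_n\mathcal{C}(\u^n)=P_n\f,\qquad \u^n(0)=P_n\x.
\end{equation*}
Pairing with $\u^n$ and using $\langle\B(\u^n),\u^n\rangle=0$, $\langle\mathcal{C}(\u^n),\u^n\rangle=\|\u^n\|_{\widetilde{\L}^{r+1}}^{r+1}$, together with Cauchy--Schwarz/Young on $(\f,\u^n)$ and the Poincar\'e inequality \eqref{poin}, gives the usual energy bound, which yields global existence of $\u^n$ and uniform bounds in $\mathrm{L}^{\infty}(0,T;\H)\cap \mathrm{L}^{2}(0,T;\V)\cap \mathrm{L}^{r+1}(0,T;\widetilde{\L}^{r+1})$. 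Estimates \eqref{2.7a} and \eqref{2.9a} then bound $\partial_t\u^n$ in $\mathrm{L}^{2}(0,T;\V')+\mathrm{L}^{(r+1)/r}(0,T;\widetilde{\L}^{(r+1)/r})$.

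Next, Banach--Alaoglu supplies a subsequence with $\u^n\rightharpoonup \u$ in each of the spaces above, while Aubin--Lions (using the compact embedding $\V\hookrightarrow\H$) upgrades this to strong convergence in $\mathrm{L}^2(0,T;\H)$. The strong convergence, combined with the bound on $\u^n$ in $\mathrm{L}^2(0,T;\V)$, identifies the weak limit of $\B(\u^n)$ with $\B(\u)$. The genuinely hard part, and the main obstacle, is identifying the weak limit $\chi$ of $\mathcal{C}(\u^n)$ in $\mathrm{L}^{(r+1)/r}(0,T;\widetilde{\L}^{(r+1)/r})$. Here the Minty--Browder device closes the argument: from Theorem \ref{thm2.4} (with the shift $\eta_2$, or Theorem \ref{thm2.3} directly with $\eta_2=0$ for $r=3$, $2\beta\mu\geq1$), one has
\begin{equation*}
\int_0^T\langle \G(\u^n)-\G(\v),\u^n-\v\rangle\d t+\eta_2\int_0^T\|\u^n-\v\|_{\H}^2\d t\geq 0
\end{equation*}
for every test $\v\in \mathrm{L}^{r+1}(0,T;\V\cap\widetilde{\L}^{r+1})$; passing to the limit using the strong $\mathrm{L}^2(0,T;\H)$ convergence and lower semicontinuity, and then probing with $\v=\u-\lambda\w$ and letting $\lambda\to 0$ from both sides, forces $\chi=\mathcal{C}(\u)$, so $\u$ is a weak solution.

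For uniqueness, let $\u_1,\u_2$ be two solutions with the same data and set $\w=\u_1-\u_2$. Subtracting the equations and pairing with $\w$, the $\B$-difference is rewritten via \eqref{441}, bounded by \eqref{2.16}, and the remaining $|\u_i|^{(r-1)/2}\w$ contributions are absorbed using the monotonicity lower bound \eqref{MO_c} into $\mu\|\w\|_{\V}^2$ and $\beta$-weighted terms, giving $\frac{\d}{\d t}\|\w\|_{\H}^2\leq 2\eta_1\|\w\|_{\H}^2$ for $r>3$. In the critical case, $|b(\w,\u_2,\w)|\leq \|\u_2\w\|_{\H}\|\w\|_{\V}$ is absorbed using $2\beta\mu\geq 1$ and Young, producing the same Gronwall-type inequality with $\eta_1=0$. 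Gronwall then closes uniqueness and continuous dependence on initial data.

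Finally, since $\u\in \mathrm{L}^{2}(0,T;\V\cap\widetilde{\L}^{r+1})$ with $\partial_t\u\in \mathrm{L}^{2}(0,T;\V')+\mathrm{L}^{(r+1)/r}(0,T;\widetilde{\L}^{(r+1)/r})$, a Lions--Magenes type interpolation lemma places $\u\in \C([0,T];\H)$ and justifies the chain rule $\frac{\d}{\d t}\|\u\|_{\H}^2=2\langle\partial_t\u,\u\rangle$. Integrating the resulting identity in time yields the energy equality \eqref{33}.
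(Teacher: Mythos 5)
Your outline is correct and is essentially the same argument the paper attributes to the cited reference \cite{MTM}: a Faedo--Galerkin scheme closed by the Minty--Browder trick using the (perturbed) monotonicity of $\G$ from Theorems \ref{thm2.4} and \ref{thm2.3}, with uniqueness via \eqref{441}, \eqref{2.16}, \eqref{MO_c} and Gronwall, and the energy equality \eqref{33} from a Lions--Magenes type lemma adapted to the triple $\V\cap\widetilde{\L}^{r+1}\hookrightarrow\H\hookrightarrow\V'+\widetilde{\L}^{\frac{r+1}{r}}$. The paper itself gives no proof, only this citation, so there is nothing further to compare.
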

	
	\subsection{Existence of global attractors} In this subsection, we establish the existence of a global attractor for the system \eqref{D-CBF}. 
	Thanks to the existence and uniqueness of weak solutions to the system \eqref{D-CBF} (Theorem \ref{D-Sol}), we can define a continuous semigroup $\{\S(t)\}_{t\geq0}$ in $\H$ by 
	\begin{align}\label{semigroup}
	\S(t)\x=\u(t), \quad t\geq0,
	\end{align}
	where $\u(\cdot)$ is the unique weak solution of \eqref{D-CBF} with $\u(0)=\x\in\H$. Next, we prove the existence of an absorbing ball in $\H$ as well as in $\V$ for the semigroup $\S(t), t\geq0$ defined on $\H$ for the 3D deterministic CBF equations \eqref{D-CBF} in periodic domains. We provide our estimates in terms of the dimensionless Grashof number,  which measures the relative strength of the forcing and viscosity and is defined as $$\G=\frac{\|\f\|_{\H}}{\mu^2\lambda_1}.$$ 
	\begin{theorem}\label{d-ab}
		For $r\geq 3$ ($r>3$, for any $\mu$ and $\beta$, and $r=3$ for $2\beta\mu\geq1$), let $\f\in\H$. Let us define $$\varrho_0^2:=\frac{\|\f\|_{\H}^2}{\mu^2\lambda_1^2}=\mu^2\G^2.$$ Then for any $\M_0>\varrho_0$ and $\u\in\H$, there exists a time $t_{\M_0}(\|\x\|_{\H})$ such that 
		\begin{align}
		\|\S(t)\x\|_{\H}\leq \M_0,\ \text{ for all }\ t\geq t_{\M_0}(\|\x\|_{\H}). 
		\end{align} That is, there exists an absorbing  set $\mathcal{B}_{\H}$ in $\H$ for the semigroup $\S(t)$. 
		
		For $r>3$, let 
		\begin{align}\label{36}
		\M_1^2:=(2\eta_3+1) \frac{\M_0^2}{\mu}+\frac{1}{\mu^2\lambda_1}\|\f\|^2_{\H},
		\end{align}
		where 
		\begin{align}\label{317}
		\eta_3=\frac{r-3}{r-1}\left[\frac{4}{\beta\mu (r-1)}\right]^{\frac{2}{r-3}}. 
		\end{align}	Then for any $\x\in\H$
		\begin{align}
		\|\S(t)\x\|_{\V}\leq \M_1,\ \text{ for all }\ t\geq t_{\M_0}(\|\x\|_{\H})+1. 
		\end{align} 
		That is,  there exists an absorbing  set $\mathcal{B}_{\V}$ in $\V$ for the semigroup $\S(t)$. Moreover, we have 
		\begin{align}\label{39}
		\limsup_{T\to\infty}\frac{1}{T}\int_t^{t+T}\|\A\u(s)\|_{\H}^2\d s\leq \left(\frac{\eta_3}{\lambda_1\mu}+1\right)\frac{2\|\f\|_{\H}^2}{\mu}.
		\end{align}
		For $r=3$ and $2\beta\mu\geq 1$ also, there exists an absorbing  set $\mathcal{B}_{\V}$ in $\V$ for the semigroup $\S(t)$ with $\M_1^2$ replaced by $\M_2^2=  \frac{\M_0^2}{\mu}+\frac{1}{\mu^2\lambda_1}\|\f\|^2_{\H}$ in \eqref{36}. 
	\end{theorem}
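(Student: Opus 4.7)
The plan is to run two standard energy-type estimates on the abstract equation \eqref{D-CBF} — one in $\H$ and one in $\V$ — where periodic boundary conditions allow the crucial identity \eqref{3} to be exploited when pairing with $\A\u$.

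\emph{Absorbing set in $\H$.} First I would use the energy equality \eqref{33} (or equivalently take the $\H$-inner product with $\u$), discard the nonnegative $\|\u\|_{\wi\L^{r+1}}^{r+1}$ term, and bound the forcing term by $|(\f,\u)|\leq\|\f\|_{\V'}\|\u\|_{\V}\leq \mu\|\u\|_{\V}^2/2+\|\f\|_{\H}^2/(2\mu\lambda_1)$. The Poincar\'e inequality \eqref{poin} then yields the differential inequality
\begin{equation*}
\frac{d}{dt}\|\u\|_{\H}^2+\mu\lambda_1\|\u\|_{\H}^2\leq \frac{1}{\mu\lambda_1}\|\f\|_{\H}^2,
\end{equation*}
from which Gronwall gives $\limsup_{t\to\infty}\|\u(t)\|_{\H}^2\leq \varrho_0^2$ at an exponential rate, producing $t_{\M_0}(\|\x\|_{\H})$ and the absorbing ball $\mathcal{B}_{\H}$ of radius $\M_0$. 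From the same energy equality, integrating on $[t,t+1]$ with $t\geq t_{\M_0}$ furnishes the auxiliary bound $\int_t^{t+1}\|\u(s)\|_{\V}^2\,ds\leq \M_0^2/\mu+\|\f\|_{\H}^2/(\mu^2\lambda_1)$, which will be reused below.

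\emph{Absorbing set in $\V$ for $r>3$.} Next I pair \eqref{D-CBF} with $\A\u$. The $\mathcal{C}$-term is unlocked by \eqref{3}, giving the nonnegative contribution $\beta\int|\u|^{r-1}|\nabla\u|^2\,dx$ (plus the gradient-of-$|\u|^{(r+1)/2}$ term, which I simply discard). For the transport term I use $|(\B(\u),\A\u)|\leq\||\u|\,|\nabla\u|\|_{\H}\|\A\u\|_{\H}$ and Young to split off $\tfrac{\mu}{4}\|\A\u\|_{\H}^2+\tfrac{1}{\mu}\int|\u|^2|\nabla\u|^2$. The key step — and the place where I expect most of the work — is absorbing $\tfrac{1}{\mu}\int|\u|^2|\nabla\u|^2$ into the good term: pointwise Young's inequality applied to the factor $|\u|^2$ with conjugate exponents $\tfrac{r-1}{2}$ and $\tfrac{r-1}{r-3}$ gives
\begin{equation*}
|\u|^2\leq \epsilon\,|\u|^{r-1}+\tfrac{r-3}{r-1}\bigl(\tfrac{2}{\epsilon(r-1)}\bigr)^{2/(r-3)},
\end{equation*}
and choosing $\epsilon=\beta\mu/2$ produces exactly the constant $\eta_3$ in \eqref{317}. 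After also handling $(\f,\A\u)\leq \tfrac{\mu}{4}\|\A\u\|_{\H}^2+\tfrac{1}{\mu}\|\f\|_{\H}^2$, I arrive at
\begin{equation*}
\frac{d}{dt}\|\u\|_{\V}^2+\mu\|\A\u\|_{\H}^2\leq \frac{2\eta_3}{\mu}\|\u\|_{\V}^2+\frac{2}{\mu}\|\f\|_{\H}^2.
\end{equation*}
Then the uniform Gronwall lemma, applied on $[t,t+1]$ with $t\geq t_{\M_0}$ and using the integral bound from the previous paragraph, yields $\|\u(t+1)\|_{\V}^2\leq \M_1^2$, establishing $\mathcal{B}_{\V}$. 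The bound \eqref{39} on $\limsup_{T\to\infty}\tfrac{1}{T}\int_t^{t+T}\|\A\u\|_{\H}^2\,ds$ is obtained from the same inequality by integrating on $[t,t+T]$, dividing by $T$, and passing to $\limsup$, combined with the Cesaro-type bound $\limsup_{T\to\infty}\tfrac{1}{T}\int_t^{t+T}\|\u\|_{\V}^2\,ds\leq \|\f\|_{\H}^2/(\mu^2\lambda_1)$ from the energy equality.

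\emph{The critical case $r=3$, $2\beta\mu\geq1$.} Here the Young splitting is unnecessary: I use the sharper bound $|(\B(\u),\A\u)|\leq \tfrac{\mu}{2}\|\A\u\|_{\H}^2+\tfrac{1}{2\mu}\int|\u|^2|\nabla\u|^2$, and the hypothesis $2\beta\mu\geq 1$ makes $\tfrac{1}{2\mu}\int|\u|^2|\nabla\u|^2\leq \beta\int|\u|^{r-1}|\nabla\u|^2$, so the nonlinear term is absorbed entirely without any residual $\|\u\|_{\V}^2$ on the right-hand side. The inequality reduces to $\tfrac{d}{dt}\|\u\|_{\V}^2+\tfrac{\mu}{2}\|\A\u\|_{\H}^2\leq \tfrac{2}{\mu}\|\f\|_{\H}^2$, after which uniform Gronwall (now with vanishing $g$) together with the $\int_t^{t+1}\|\u\|_{\V}^2$ bound yields the absorbing set with $\M_1^2$ replaced by $\M_2^2$. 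The main obstacle in the whole argument is really the nonlinear-absorption step for $r>3$; once the correct Young exponents are identified, the remainder is routine Gronwall bookkeeping.
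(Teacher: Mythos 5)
Your proposal is correct and follows essentially the same route as the paper: the $\H$-estimate with Poincar\'e and Gronwall, the $\A\u$-estimate unlocked by the identity \eqref{3}, the pointwise Young inequality with exponents $\tfrac{r-1}{2}$ and $\tfrac{r-1}{r-3}$ producing exactly $\eta_3$ for $r>3$, and direct absorption of $\||\u||\nabla\u|\|_{\H}^2$ into the Forchheimer term when $r=3$ and $2\beta\mu\geq1$. The only cosmetic difference is that you invoke the uniform Gronwall lemma where the paper uses the equivalent double-integration trick over $[t,t+1]$, which changes nothing beyond the precise constant in $\M_1$.
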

	\begin{proof}
		We divide the proof into the following steps:
		\vskip 0.2cm
		\noindent\textbf{Step I:} \emph{Absorbing ball in $\H$.} 
		From the energy equality \eqref{33}, we infer that $\|\u(t)\|_{\H}^2$ is an absolutely continuous function. 	Taking the inner product with $\u(\cdot)$ to first equation of \eqref{D-CBF} and using \eqref{b0}, we find 
		\begin{align*}
		\frac{1}{2}\frac{\d}{\d t}\|\u(t)\|^2_{\H} +\mu\|\u(t)\|^2_{\V}+\beta\|\u(t)\|^{r+1}_{\widetilde{\L}^{r+1}}&=(\f,\u(t)),
		\end{align*}
		for a.e. $t\in[0,T]$. Using the Cauchy-Schwarz inequality and Young's inequality, we estimate $|(\f,\u)|$ as 
		\begin{align*}
		|(\f,\u)|\leq\|\f\|_{\H}\|\u\|_{\H}\leq\frac{\mu}{2}\|\u\|_{\V}^2+\frac{1}{2\mu\lambda_1}\|\f\|_{\H}^2. 
		\end{align*}
		Thus, it is immediate that 
		\begin{align}\label{d-ab1}
		\frac{\d}{\d t}\|\u(t)\|^2_{\H} +\mu\lambda_1\|\u(t)\|^2_{\H}+2\beta\|\u(t)\|^{r+1}_{\widetilde{\L}^{r+1}}&\leq	\frac{\d}{\d t}\|\u(t)\|^2_{\H} +\mu\|\u(t)\|^2_{\V}+2\beta\|\u(t)\|^{r+1}_{\widetilde{\L}^{r+1}}\nonumber
		\\&\leq\frac{1}{\mu\lambda_1}\|\f\|^2_{\H}.
		\end{align}
		Using the classical Gronwall inequality, we deduce that 
		\begin{align}\label{d-ab2}
		\|\u(t)\|^2_{\H} \leq \|\x\|^2_{\H}e^{-\mu\lambda_1 t} + \frac{1}{\mu^2\lambda_1^2}\|\f\|^2_{\H}(1-e^{-\mu\lambda_1t}),
		\end{align}
		for all $t\geq0$.
		From the above relation it is clear that for any given $\M_0>\varrho_0$, there exists a time $t_0$, which depends only on $\M_0$ and $\|\x\|_{\H}$ such that 
		\begin{align}
		\|\u(t)\|_{\H}^2\leq\M_0^2,\ \text{ for all }\ t\geq t_{\M_0}(\|\x\|_{\H}). 
		\end{align}
		Furthermore, from \eqref{d-ab1}, we infer that
		\begin{align}\label{d-ab5}
		\mu\int_{t}^{t+\theta}\|\u(s)\|^2_{\V} \d s+2\beta\int_{t}^{t+\theta}\|\u(s)\|^{r+1}_{\widetilde{\L}^{r+1}}\d s\leq\|\u(t)\|_{\H}^2+\frac{\theta}{\mu\lambda_1}\|\f\|^2_{\H},
		\end{align}
		for all $\theta>0.$ The above equality  implies that for $t\geq t_{\M_0}(\|\x\|_{\H})$
		\begin{align}\label{314}
		\mu\int_{t}^{t+\theta}\|\u(s)\|^2_{\V} \d s+2\beta\int_{t}^{t+\theta}\|\u(s)\|^{r+1}_{\widetilde{\L}^{r+1}}\d s\leq \M_0^2+\frac{\theta}{\mu\lambda_1}\|\f\|^2_{\H}.
		\end{align}
		Similarly, we obtain 
		\begin{align}\label{315}
		\limsup_{T\to\infty}\frac{1}{T}\int_0^T\|\u(s)\|_{\V}^2\d s\leq \frac{1}{\mu^2\lambda_1}\|\f\|_{\H}^2\ \text{ and }\  \limsup_{T\to\infty}\frac{1}{T}\int_0^T\|\u(s)\|_{\wi\L^{r+1}}^{r+1}\d s\leq \frac{1}{\beta\mu\lambda_1}\|\f\|_{\H}^2. 
		\end{align}
		\vskip 0.2cm
		\noindent\textbf{Step II:} \emph{Absorbing ball in $\V$.} 
		Let us now take the inner product with $\A\u(\cdot)$ to first equation of \eqref{D-CBF} to  get 
		\begin{align}\label{d-ab8}
		\frac{1}{2}\frac{\d}{\d t}\|\u(t)\|^2_{\V} +\mu\|\A\u(t)\|^2_{\H}+\beta(\mathcal{C}(\u(t)),\A\u(t))=-(\B(\u(t)),\A\u(t))+(\f,\A\u(t)),
		\end{align}
		for a.e. $t\in[0,T]$. We consider the cases $r>3$ and $r=3$ separately. 
		\vskip 0.2 cm
		\noindent
		\textbf{Case I:}  $r>3$.
		From \eqref{3}, we have
		\begin{align}
		(\mathcal{C}(\u),\A\u)&=\||\nabla\u||\u|^{\frac{r-1}{2}}\|^2_{\H}+4\left[\frac{r-1}{(r+1)^2}\right]\|\nabla|\u|^{\frac{r+1}{2}}\|^2_{\H}.\label{d-ab9} 
		\end{align}
		Once again the	Cauchy-Schwarz and Young inequalities yield 
		\begin{align}
		|(\f, \A\u)|&\leq \|\f\|_{\H}\|\A\u\|_{\H}\leq\frac{\mu}{4}\|\A\u\|^2_{\H} + \frac{1}{\mu}\|\f\|^2_{\H}.\label{d-ab10}
		\end{align}
		We estimate $|(\B(\u),\A\u)|$ using H\"older's and Young's inequalities as 
		\begin{align}\label{d-ab12}
		|(\B(\u),\A\u)|&\leq\||\u||\nabla\u|\|_{\H}\|\A\u\|_{\H}\leq\frac{\mu}{4}\|\A\u\|_{\H}^2+\frac{1}{\mu }\||\u||\nabla\u|\|_{\H}^2. 
		\end{align}
		We  estimate the final term from \eqref{d-ab12} using H\"older's and Young's inequalities as (similarly as in \eqref{2.29})
		\begin{align}\label{d-ab13}
		&	\int_{\mathbb{T}^3}|\u(x)|^2|\nabla\u(x)|^2\d x\nonumber\\&\leq{\frac{\beta\mu}{2} }\left(\int_{\mathbb{T}^3}|\u(x)|^{r-1}|\nabla\u(x)|^2\d x\right)+\frac{r-3}{r-1}\left[\frac{4}{\beta\mu (r-1)}\right]^{\frac{2}{r-3}}\left(\int_{\mathbb{T}^3}|\nabla\u(x)|^2\d x\right).
		\end{align}
		Making use of the estimate \eqref{d-ab13} in \eqref{d-ab12}, we find
		\begin{align}\label{d-ab14}
		|(\B(\u),\A\u)|&\leq\frac{\mu}{4}\|\A\u\|_{\H}^2+\frac{\beta}{2}\||\nabla\u||\u|^{\frac{r-1}{2}}\|^2_{\H}+\eta_3\|\u\|^2_{\V}.
		\end{align}
		Using \eqref{d-ab9}-\eqref{d-ab10} and \eqref{d-ab14} in \eqref{d-ab8}, we obtain
		\begin{align}\label{d-ab15}
		&	\frac{\d}{\d t}\|\u(t)\|^2_{\V}+\mu\|\A\u(t)\|_{\H}^2+\beta\||\nabla\u(t)||\u(t)|^{\frac{r-1}{2}}\|^2_{\H}+8\beta\left[\frac{r-1}{(r+1)^2}\right]\|\nabla|\u(t)|^{\frac{r+1}{2}}\|^2_{\H}\nonumber\\&\leq 2\eta_3\|\u(t)\|^2_{\V}+\frac{2}{\mu}\|\f\|^2_{\H}.
		\end{align}
		We use the double integration trick used in \cite{Robinson} to obtain an absorbing ball in $\V$. Dropping the terms $\mu\|\A\u\|_{\H}^2+\beta\||\nabla\u(t)||\u|^{\frac{r-1}{2}}\|^2_{\H}+8\beta\left[\frac{r-1}{(r+1)^2}\right]\|\nabla|\u|^{\frac{r+1}{2}}\|^2_{\H}$ from the left hand side of the inequality \eqref{d-ab15} and then integrating from  $s$ to $t+1$, with $t\leq s<t+1$,  we find 
		\begin{align*}
		\|\u(t+1)\|_{\V}^2&\leq\|\u(s)\|_{\V}^2+ 2\eta_3\int_{s}^{t+1}\|\u(r)\|^2_{\V}\d r+\frac{2}{\mu}\|\f\|^2_{\H}\nonumber\\&\leq \|\u(s)\|_{\V}^2+ 2\eta_3\int_{t}^{t+1}\|\u(r)\|^2_{\V}\d r+\frac{2}{\mu}\|\f\|^2_{\H},
		\end{align*}
		where $\eta_3$ is defined in \eqref{317}. Let us now integrate both sides of the above inequality with respect to $s$ between $t$  and $t+1$ to obtain 
		\begin{align}\label{322}
		\|\u(t+1)\|_{\V}^2&\leq(2\eta_3+1)\int_t^{t+1}\|\u(s)\|_{\V}^2\d s+\frac{2}{\mu}\|\f\|^2_{\H}\leq (2\eta_3+1) \frac{\M_0^2}{\mu}+\frac{\theta}{\mu^2\lambda_1}\|\f\|^2_{\H},
		\end{align}
		for all $t\geq t_{\M_0}(\|\x\|_{\H})$. Integrating the inequality \eqref{d-ab15} from $t$ to $t+T$, we find 
		\begin{align*}
		\|\u(t+T)\|_{\V}^2+\mu\int_t^{t+T}\|\A\u(s)\|_{\H}^2\d s\leq\|\u(t)\|_{\V}^2+2\eta_3\int_t^{t+T}\|\u(s)\|_{\V}^2\d s+\frac{2T\|\f\|_{\H}^2}{\mu}.
		\end{align*}
		Dividing by $T$, taking the limit supremum and then using \eqref{315} and \eqref{322}, we finally obtain \eqref{39}. 
		\vskip 0.2 cm
		\noindent 
		\textbf{Case II:} $r=3$ and $2\beta\mu\geq1$. Using \eqref{3}, the Cauchy-Schwarz and Young inequalities, we find 
		\begin{align}
		|(\B(\u),\A\u)|&\leq\||\u||\nabla\u|\|_{\H}\|\A\u\|_{\H}\leq\frac{1}{4\beta}\|\A\u\|_{\H}^2+\beta\||\u||\nabla\u|\|_{\H}^2,\label{d-ab18}\\
		(\mathcal{C}(\u),\A\u)&=\||\nabla\u||\u|\|^2_{\H}+\frac{1}{2}\|\nabla|\u|^2\|^2_{\H},\label{d-ab19}\\
		|(\f, \A\u)|&\leq \|\f\|_{\H}\|\A\u\|_{\H}\leq\frac{\mu}{2}\|\A\u\|^2_{\H} + \frac{1}{2\mu}\|\f\|^2_{\H}.\label{d-ab20}
		\end{align}
		Using \eqref{d-ab18}-\eqref{d-ab20} in \eqref{d-ab8}, we obtain
		\begin{align}\label{d-ab21}
		\frac{\d}{\d t}\|\u(t)\|^2_{\V}+\bigg(\mu-\frac{1}{2\beta}\bigg)\|\A\u(t)\|^2_{\H}+\beta\|\nabla|\u|^2\|^2_{\H} &\leq\frac{1}{\mu}\|\f\|^2_{\H}.
		\end{align}
		For $2\beta\mu\geq 1$, performing calculations similar to the case of $r>3$, we have 
		\begin{align}\label{d-ab22}
		\|\u(t)\|^2_{\V}\leq  \frac{\M_0^2}{\mu}+\frac{\theta}{\mu^2\lambda_1}\|\f\|^2_{\H}=:\M_2^2,
		\end{align}
		for all $t\geq t_{\M_0}(\|\x\|_{\H})$, and hence the conclusions of the Theorem follows.  
	\end{proof}
	Thanks to the compactness of $\V$ in $\H$, the operators $\S(t)$ are uniformly compact. From Theorem \ref{d-ab} and the abstract theory of global attractors (Theorem 1.1, Chapter I, \cite{R.Temam}), we immediately conclude the following result.
	\begin{theorem}\label{Det-Attractor}
		The dynamical system associated with the 3D deterministic CBF equations \eqref{D-CBF} possesses an attractor $\mathcal{A}$ that is compact, connected, and maximal in $\H$. $\mathcal{A}$ attracts the bounded sets of $\H$ and $\mathcal{A}$ is also maximal among the functional invariant sets bounded in $\H$.
	\end{theorem}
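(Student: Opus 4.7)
The approach is the standard Ladyzhenskaya--Babin--Vishik--Temam scheme: verify the three hypotheses of the abstract global attractor theorem (Theorem 1.1, Chapter I of \cite{R.Temam}), namely that $\{\S(t)\}_{t\geq 0}$ is a continuous semigroup on $\H$, that it is uniformly compact for large $t$, and that there is a bounded absorbing set in $\H$. The first hypothesis is already in hand from Theorem \ref{D-Sol}, which gives uniqueness together with $\u \in \C([0,T];\H)$, so that \eqref{semigroup} defines a well-posed continuous semigroup on $\H$. The third hypothesis is exactly the conclusion of Step I of Theorem \ref{d-ab}, yielding the absorbing ball $\mathcal{B}_{\H}$ of radius $\M_0$.

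The only remaining point is uniform compactness, and this is where Step II of Theorem \ref{d-ab} is used. Given a bounded set $B \subset \H$ of radius $R$, the time $t_{\M_0}(R)$ in Theorem \ref{d-ab} is uniform over $B$, so $\S(t)B \subset \mathcal{B}_{\V}$ for all $t \geq t_{\M_0}(R)+1$, where $\mathcal{B}_{\V}$ is the absorbing ball of radius $\M_1$ (or $\M_2$ in the critical case) in $\V$. Because the domain $\mathbb{T}^3$ is bounded, the embedding $\V \hookrightarrow \H$ is compact (Rellich--Kondrachov), hence $\mathcal{B}_{\V}$ is precompact in $\H$, and the family $\bigcup_{t\geq t_{\M_0}(R)+1}\S(t)B$ is relatively compact in $\H$. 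This is precisely the uniform compactness hypothesis of Temam's theorem.

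With all three hypotheses verified, Temam's abstract result produces a global attractor
\begin{equation*}
\mathcal{A} = \bigcap_{s\geq 0}\overline{\bigcup_{t\geq s}\S(t)\mathcal{B}_{\H}}^{\,\H},
\end{equation*}
which is nonempty, compact in $\H$, invariant under $\S(t)$, and attracts every bounded subset of $\H$; maximality among bounded functional-invariant sets is built into the abstract statement. Connectedness is then a consequence of the fact that $\H$ is a connected Banach space and each $\S(t):\H\to\H$ is continuous, so the absorbing ball $\mathcal{B}_{\H}$ is connected and its forward images stay connected; the intersection defining $\mathcal{A}$ is a decreasing intersection of connected compact sets, hence connected.

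I do not anticipate a genuine obstacle here: all the real analytic work lives in Theorem \ref{d-ab} (specifically the double-integration trick used to control $\|\u(t)\|_{\V}^2$ from the $\mathrm{L}^2$-in-time bound \eqref{314}) and in the monotonicity-based well-posedness of Theorem \ref{D-Sol}. The only item that needs a brief remark is that the stronger absorbing ball lives in $\V$ rather than merely in $\H$, which is what upgrades boundedness to precompactness; this is exactly why periodicity (and hence the key identity \eqref{3}) was invoked at the start of Section \ref{sec3}.
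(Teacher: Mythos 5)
Your argument is exactly the paper's: the paper likewise deduces uniform compactness of $\S(t)$ from the $\V$-absorbing ball of Theorem \ref{d-ab} together with the compact embedding $\V\hookrightarrow\H$, and then invokes Theorem 1.1, Chapter I of \cite{R.Temam}; you have simply written out the three hypotheses (continuity from Theorem \ref{D-Sol}, the $\H$-absorbing ball from Step I, uniform compactness from Step II) that the paper compresses into one sentence. The additional remarks on connectedness and maximality are standard consequences of the same abstract theorem and raise no issues.
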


	\section{Random attractors for the 3D Stochastic CBF equations}\label{sec4}\setcounter{equation}{0}
	In this section, we establish the existence of a random attractor for the 3D stochastic CBF equations  perturbed by a small additive smooth Gaussian noise. 
	
	\subsection{Abstract formulation}\label{sub4}
	Taking orthogonal projection $\mathcal{P}$ onto the first equation in \eqref{2}, we obtain the following abstract formulation of the 3D SCBF equations: 
	\begin{equation}\label{S-CBF}
	\left\{
	\begin{aligned}
	\d\u_{\varepsilon}(t)+\{\mu \A\u_{\varepsilon}(t)+\B(\u_{\varepsilon}(t))+\beta \mathcal{C}(\u_{\varepsilon}(t))\}\d t&=\f \d t + \varepsilon\d\text{W}(t), \ \ \ t\geq 0, \\ 
	\u_{\varepsilon}(0)&=\x,
	\end{aligned}
	\right.
	\end{equation}
	for $r\geq 3$ ($r>3$, for any $\mu$ and $\beta$, and $r=3$ for $2\beta\mu\geq1$) and $\varepsilon\in(0,1]$, where we assume that $\x\in \H,\ \f\in \H$ and $\text W(t), \ t\in \R,$ is a two-sided cylindrical Wiener process  with its Reproducing Kernel Hilbert Space (RKHS \cite{DZ1})  $\K$ satisfying Assumption \ref{assump} below defined on some filtered probability space $(\Omega, \mathscr{F}, (\mathscr{F}_t)_{t\in \R}, \mathbb{P})$.
	\begin{assumption}\label{assump}
		$ \K \subset \V \cap \dot{\H}^{2}_p (\mathbb{T}^3)$ is a Hilbert space such that for some $\delta\in (0, 1/2),$
		\begin{align}\label{A1}
		\A^{-\delta} : \K \to \V \cap \dot{\H}^{2}_p (\mathbb{T}^3) \ \ \  \text{is}\ \gamma \text{-radonifying.}
		\end{align}
	\end{assumption}
	\begin{remark}
		1. Let $\mathcal{H}$ be a real separable Hilbert space, let $\{w_k\}$ be an orthonormal basis of $\mathcal{H}$, and $\{\beta_k\}$ be a system of independent normal real-valued random variables defined on a probability space $(\Omega,\mathscr{F},\mathbb{P})$. For a real Banach space $(\mathrm{E},\|\cdot\|_{\mathrm{E}})$, a bounded linear operator $\Psi:\mathcal{H}\to\mathrm{E}$ is called $\gamma$-radonifying if and only if the series $\sum\limits_{k=1}^{\infty}\beta_k\Psi w_k$ converges in $\mathrm{L}^2(\Omega,\mathscr{F},\mathbb{P};\mathrm{E})$. The set of all $\gamma$-radonifying operators from $\mathcal{H}$ into $\mathrm{E}$ is denoted by $\gamma(\mathcal{H};\mathrm{E})$, and the space $(\gamma(\mathcal{H};\mathrm{E}),\|\cdot\|_{\gamma(\mathcal{H};\mathrm{E})})$, where $\|\Psi\|_{\gamma(\mathcal{H};\mathrm{E})}:=\left\{\E\left\|\sum\limits_{k=1}^{\infty}\beta_k\Psi w_k\right\|_{\mathrm{E}}^2\right\}^{1/2}$ is the norm defined on $\gamma(\mathcal{H};\mathrm{E})$, is a separable Banach space. If $\mathrm{E}$ is a separable Hilbert space then we know that $\mathcal{L}_2(\mathcal{H};\mathrm{E})$,  the class of all Hilbert-Schmidt operators from $\mathcal{H}$ into $\mathrm{E}$ endowed with the norm $\|\Psi\|_{\mathcal{L}_2(\mathcal{H};\mathrm{E})}=\left(\sum\limits_{k=1}^{\infty}\|\Psi w_k\|_{\mathrm{E}}^2\right)^{1/2}$,  is same as that of $\gamma(\mathcal{H};\mathrm{E})$. 
		
		2. Since $\D(\A)=\V \cap \dot{\H}_p^{2} (\mathbb{T}^3)$, one can reformulate Assumption \ref{assump}  in the following way also (see \cite{BL1}). $\mathrm{K}$ is a Hilbert space such that $\mathrm{K}\subset\D(\A)$ and, for some $\delta\in(0,1/2)$, the map \begin{align}\label{34}
		\A^{-\delta-1} : \mathrm{K} \to \H \   \text{ is }\ \gamma \text{-radonifying.}
		\end{align}
		The condition  \eqref{34} also says that the mapping $\A^{-\delta-1} : \mathrm{K} \to \H$  is Hilbert-Schmidt.	Since $\mathbb{T}^3$ is a periodic domain, then $\A^{-s}:\H\to\H$ is Hilbert-Schmidt if and only if $\sum_{j=1}^{\infty} \lambda_j^{-2s}<\infty,$ where  $\A w_j=\lambda_j w_j, j\in \N$ and $w_j$ is an orthogonal basis of $\H$. In periodic domains, we know that $\lambda_j\sim j^{2/3},$ for large $j$ (growth of eigenvalues, \cite{FMRT}) and hence $\A^{-s}$ is Hilbert-Schmidt if and only if $s>\frac{3}{4}.$ In other words, with $\mathrm{K}=\D(\A^{s+1}),$ the embedding $\mathrm{K}\hookrightarrow\V\cap\dot{\H}^{2}_p(\mathbb{T}^3)$ is $\gamma$-radonifying if and only if $s>\frac{3}{4}.$ Thus, Assumption \ref{assump} is satisfied for any $\delta>0.$ In fact, the condition \eqref{A1} holds if and only if the operator $\A^{-(s+1+\delta)}:\H\to \V\cap\dot{\H}^{2}_p(\mathbb{T}^3)$ is $\gamma$-radonifying. We emphasize that the requirement of  $\delta<\frac{1}{2}$ in Assumption \ref{assump} is necessary because we need (see subsection \ref{O_up}) the corresponding Ornstein-Uhlenbeck process has to take values in $\V\cap\dot{\H}^{2}_p(\mathbb{T}^3)$.
	\end{remark}
	\subsection{Solution to stochastic CBF equations }
	In this subsection, we provide the definition of a pathwise unique strong solution in the probabilistic sense to the system \eqref{S-CBF} and discuss about the existence and uniqueness of such a solution. 
	\begin{definition}
		Let $\x \in \H$,  $\f\in \H$ and $\mathrm{W}(t), t\in \R$ be two sided Wiener process in $\H$ with its Reproducing Kernel Hilbert Space (RKHS) $\mathrm{K}$. An $\H$-valued $(\mathscr{F}_t)_{t\geq 0}$-adapted stochastic process $\u_{\varepsilon}(t), \ t\geq 0,$ is called a strong solution to the system \eqref{S-CBF} if the following conditions are satisfied:
		\begin{itemize}
			\item [(i)] the process $\u_{\varepsilon}\in \mathrm{L}^{2}(\Omega; \mathrm{L}^{\infty}(0, T; \H) \cap \mathrm{L}^{2}(0, T; \V))\cap \mathrm{L}^{r+1}(\Omega; \mathrm{L}^{r+1}(0,T;\widetilde{\L}^{r+1})$  with $\mathbb{P}$-a.s., trajectories in $\mathrm{C}([0, T]; \H) \cap \mathrm{L}^{2}(0, T; \V)\cap\mathrm{L}^{r+1}(0,T;\widetilde{\L}^{r+1}),$ 
			\item [(ii)] the following equality holds for every $t\in[0,T]$ and for any $\psi\in \V\cap\widetilde{\L}^{r+1}$, $\mathbb{P}$-a.s.  
			\begin{align}\label{W-SCBF}
			(\u_{\varepsilon}(t), \psi) &= (\x, \psi) - \int_{0}^{t} \langle \mu \A\u_{\varepsilon}(s)+\B(\u_{\varepsilon}(s))+\beta \mathcal{C}(\u_{\varepsilon}(s)) , \psi\rangle \d s  +  \int_{0}^{t}( \f , \psi ) \d s  \nonumber\\&\quad+ \varepsilon \int_{0}^{t} (\d\mathrm{W}(s), \psi).
			\end{align}
		\end{itemize}
	\end{definition} 
	\begin{theorem}[\cite{MTM1}]
		Suppose that Assumption \ref{assump} is satisfied. Let $\x \in \H$, $r\geq 3$ ($r>3$, for any $\mu$ and $\beta$, and $r=3$ for $2\beta\mu\geq1$), $\f\in \H$ and $\mathrm{W}(t), t\in \R$ is a two sided Wiener process in $\H$ with its Reproducing Kernel Hilbert Space (RKHS) $\mathrm{K}$. Then there exists a pathwise unique  strong solution $\u_{\varepsilon}(\cdot)$ to the system \eqref{S-CBF}.
	\end{theorem}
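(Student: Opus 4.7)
The natural strategy is a pathwise reduction to a random PDE via an Ornstein--Uhlenbeck process, followed by a Galerkin/Minty--Browder argument analogous to the deterministic Theorem \ref{D-Sol}. First I would introduce the stationary Ornstein--Uhlenbeck process $\z_{\varepsilon}(t)$ solving the linear stochastic Stokes equation
\begin{equation*}
\d\z_{\varepsilon}(t)+\mu \A\z_{\varepsilon}(t)\d t=\varepsilon\d\W(t),\qquad t\in\R.
\end{equation*}
Under Assumption \ref{assump}, since $\A^{-\delta}:\K\to\V\cap\dot{\H}_p^{2}(\mathbb{T}^3)$ is $\gamma$-radonifying for some $\delta\in(0,1/2)$, standard results on Gaussian stochastic convolutions in UMD-type spaces yield a version of $\z_{\varepsilon}$ with $\mathbb{P}$-a.s.\ continuous trajectories in $\V\cap\dot{\H}_p^{2}(\mathbb{T}^3)$; moreover, by stationarity and the mean-ergodic theorem, $\E\|\z_{\varepsilon}(t)\|_{\V\cap\dot{\H}_p^{2}}^{q}$ is finite and constant in $t$ for every $q\in[1,\infty)$.

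Next I would set $\v_{\varepsilon}=\u_{\varepsilon}-\z_{\varepsilon}$ and rewrite \eqref{S-CBF} as the $\omega$-wise evolution equation
\begin{equation*}
\frac{\d\v_{\varepsilon}}{\d t}+\mu\A\v_{\varepsilon}+\B(\v_{\varepsilon}+\z_{\varepsilon})+\beta\mathcal{C}(\v_{\varepsilon}+\z_{\varepsilon})=\f,\qquad \v_{\varepsilon}(0)=\x,
\end{equation*}
which is, for each fixed $\omega$, a deterministic (non-autonomous) problem with forcing depending on the known random path $\z_{\varepsilon}(\cdot,\omega)$. For this random PDE I would run a Faedo--Galerkin scheme in the basis $\{w_{i}\}$ of eigenfunctions of $\A$. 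The core a priori estimates are obtained by taking the inner product with $\v_{\varepsilon}^{n}$ and mimicking the energy bound \eqref{33}: the key cancellation $\langle\B(\v_{\varepsilon}+\z_{\varepsilon}),\v_{\varepsilon}\rangle = -\langle\B(\v_{\varepsilon}+\z_{\varepsilon},\z_{\varepsilon}),\v_{\varepsilon}\rangle$ together with the monotonicity-type inequality \eqref{MO_c} for $\mathcal{C}$, H\"older, interpolation, and the regularity of $\z_{\varepsilon}$ in $\V\cap\dot{\H}_p^{2}$ give uniform bounds
\begin{equation*}
\v_{\varepsilon}^{n}\in\mathrm{L}^{\infty}(0,T;\H)\cap\mathrm{L}^{2}(0,T;\V)\cap\mathrm{L}^{r+1}(0,T;\widetilde{\L}^{r+1}),
\end{equation*}
$\mathbb{P}$-a.s., and hence weak (weak-$*$) convergence along a subsequence. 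Estimates on $\partial_{t}\v_{\varepsilon}^{n}$ in $\mathrm{L}^{2}(0,T;\V')+\mathrm{L}^{(r+1)/r}(0,T;\widetilde{\L}^{(r+1)/r})$ via \eqref{2.9a} plus an Aubin--Lions argument upgrade this to strong convergence in $\mathrm{L}^{2}(0,T;\H)$.

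To identify the nonlinear limits I would invoke the stochastic Minty--Browder technique as in \cite{MTM1}: apply It\^o's formula (or, here, just the chain rule for $\v_{\varepsilon}$) together with the monotonicity inequalities \eqref{fe4} (for $r>3$) and \eqref{fe5} (for $r=3$, $2\beta\mu\geq 1$) of Theorems \ref{thm2.4}--\ref{thm2.3}, applied to the shifted operator $\G(\cdot+\z_{\varepsilon})$; the extra $\z_{\varepsilon}$-terms are absorbed using Young's inequality and the regularity of $\z_{\varepsilon}$. This identifies the weak limits of $\B(\v_{\varepsilon}^{n}+\z_{\varepsilon})$ and $\mathcal{C}(\v_{\varepsilon}^{n}+\z_{\varepsilon})$ with the correct nonlinear expressions evaluated at the limit, yielding a pathwise strong solution $\v_{\varepsilon}$, and hence $\u_{\varepsilon}=\v_{\varepsilon}+\z_{\varepsilon}$ satisfies \eqref{W-SCBF}. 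Pathwise uniqueness follows directly from the same monotonicity estimate \eqref{fe4}--\eqref{fe5} and Gronwall's inequality applied to the difference of two solutions; continuity in $\H$ follows from the energy equality obtained by an It\^o formula in the Gelfand triple $\V\cap\widetilde{\L}^{r+1}\hookrightarrow\H\hookrightarrow\V'+\widetilde{\L}^{(r+1)/r}$. The main obstacle is the Minty--Browder identification in the critical case $r=3$ with $2\beta\mu\geq 1$, where the viscosity is only just large enough to absorb the cubic nonlinearity and the $\z_{\varepsilon}$-perturbed estimates must be handled with care so that the threshold $2\beta\mu\geq 1$ is preserved; beyond that, the reduction to a pathwise problem makes the proof parallel to the deterministic Theorem \ref{D-Sol}.
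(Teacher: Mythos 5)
This theorem is imported verbatim from \cite{MTM1}; the present paper offers no proof of its own beyond the citation, so the only available comparison is with the method the paper attributes to that reference, namely ``monotonicity as well as hemicontinuity properties of linear and nonlinear operators, and a stochastic generalization of the Minty--Browder technique.'' Your sketch is a correct and standard route to the result, but it is not quite the same route: you first perform a pathwise Ornstein--Uhlenbeck reduction $\v_\varepsilon=\u_\varepsilon-\z_\varepsilon$ and then run a \emph{deterministic} Galerkin/Minty--Browder argument for each fixed $\omega$, whereas the cited proof works directly on the It\^o equation, applying It\^o's formula to $\|\u_\varepsilon^n\|_{\H}^2$, taking expectations, and using the local monotonicity \eqref{fe4}--\eqref{fe5} together with hemicontinuity to identify the limit (the ``stochastic'' Minty--Browder). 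The pathwise reduction buys you a purely deterministic limit argument and is exactly the transformation this paper itself exploits later to build the RDS (equation \eqref{cscbf}), at the cost of having to establish the $\V\cap\dot{\H}^2_p$-valued regularity of $\z_\varepsilon$ first (which Assumption \ref{assump} with $\delta<1/2$ is designed to give) and of then recovering adaptedness of $\u_\varepsilon$ from pathwise uniqueness and measurable dependence on the noise path --- a point worth stating explicitly since the theorem asserts a strong solution in the probabilistic sense. Two small slips: the cancellation should read $\langle\B(\v_\varepsilon+\z_\varepsilon),\v_\varepsilon\rangle=b(\v_\varepsilon+\z_\varepsilon,\z_\varepsilon,\v_\varepsilon)=-b(\v_\varepsilon+\z_\varepsilon,\v_\varepsilon,\z_\varepsilon)$ (your sign is off, though only the modulus enters the estimate); and for $r=3$ the Minty--Browder identification uses the \emph{global} monotonicity \eqref{fe5} under $2\beta\mu\geq1$, so no extra care beyond the deterministic case is actually needed there.
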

	
	\subsection{Ornstein-Uhlenbeck process}\label{O_up}

	We define Ornstein-Uhlenbeck processes under Assumption \ref{assump} and discuss about its properties in this subsection (for more details see section 3, \cite{KM1}). 
	
	Let us define $\mathrm{X} := \V \cap \dot{\H}^{2}_p (\mathbb{T}^3)$ and let $\mathrm{E}$ be the completion of $\A^{-\delta}\mathrm{X}$ with respect to the image norm $\|x\|_{\mathrm{E}}  = \|\A^{-\delta} x\|_{\mathrm{X}} , \ \text{for } \ x\in \mathrm{X}, \text{where } \|\cdot\|_{\mathrm{X}} = \|\cdot\|_{\V} + \|\cdot\|_{\dot{\H}^{2}_p }.$ For $\xi \in(0, 1/2),$ we define
	\begin{align*}
	\C^{\xi}_{1/2} (\mathbb{R}, \mathrm{E}) &= \left\{ \omega \in \C(\mathbb{R}, \mathrm{E}) : \omega(0)=0,\  \sup_{t\neq s \in \mathbb{R}} \frac{\|\omega(t) - \omega(s)\|_{\mathrm{E}}}{|t-s|^{\xi}(1+|t|+|s|)^{1/2}} < \infty \right\},\\
	\Omega(\xi, \mathrm{E})&=\text{the closure of } \{ \omega \in \C^\infty_0 (\mathbb{R}, \mathrm{E}) : \omega(0) = 0 \} \ \text{ in } \ \C^{\xi}_{1/2} (\mathbb{R},\mathrm{E}).
	\end{align*}
	The space $\Omega(\xi, \mathrm{E})$ is a separable Banach space. Let us denote $\mathscr{F}$ for the Borel $\sigma$-algebra on $\Omega(\xi, \mathrm{E}).$ For $\xi\in (0, 1/2)$, there exists a Borel probability measure $\mathbb{P}$ on $\Omega(\xi, \mathrm{E})$ (see \cite{Brze}). For $t\in \mathbb{R},$ let $\mathscr{F}_t := \sigma \{ w_s : s \leq t \},$ where $w_t$ is the canonical process defined by the elements of  $\Omega(\xi, \mathrm{E}).$ Then there exists a family $\{\text{W}(t)\}_{t\in \mathbb{R}}$, which is $\mathrm{K}$-cylindrical Wiener process on a filtered probability space $(\Omega(\xi, \mathrm{E}), \mathscr{F}, (\mathscr{F}_t)_{t \in \mathbb{R}} , \mathbb{P})$.
	
	On the space $\Omega(\xi, \mathrm{E}),$ we consider a flow $\theta = (\theta_t)_{t\in \mathbb{R}}$ defined by
	$$ \theta_t \omega(\cdot) = \omega(\cdot + t) - \omega(t), \  \omega\in \Omega(\xi, \mathrm{E}), \  t\in \mathbb{R}.$$ 
	\begin{proposition}[Proposition 6.10, \cite{BL}]\label{SOUP}
		The process $\z_{\alpha}(t), \ t\in \mathbb{R},$ is stationary Ornstein-Uhlenbeck process on $(\Omega(\xi, \mathrm{E}), \mathscr{F}, \mathbb{P})$ . It is a solution of the equation 
		\begin{align}\label{OUPe}
		\d\z_{\alpha}(t) + (\mu \A + \alpha I)\z_{\alpha}(t) \d t = \d\mathrm{W}(t), \  t\in \mathbb{R},
		\end{align}
		that is, for all $t\in \mathbb{R},$ 
		\begin{align}\label{oup}
		\z_\alpha (t) = \int_{-\infty}^{t} e^{-(t-s)(\mu \A + \alpha I)} \d\mathrm{W}(s),
		\end{align}
		$\mathbb{P}$-a.s.,	where the integral is the It\^o integral on the M-type 2 Banach space $\mathrm{X}$ in the sense of \cite{Brze1}. 
		In particular, for some $C$ depending on $\mathrm{X}$,
		\begin{align}\label{E-OUP}
		\mathbb{E}\left[\|\z_{\alpha} (t)\|^2_{\mathrm{X}} \right]\leq C \int_{0}^{\infty}  e^{-2\alpha s} \|e^{-\mu s \A}\|^2_{\gamma(\mathrm{K},\mathrm{X})} \d s.
		\end{align} 
		Moreover, $\mathbb{E}\left[\|\z_{\alpha} (t)\|^2_{\mathrm{X}}\right]\to 0$ as $\alpha \to \infty.$
		
	\end{proposition}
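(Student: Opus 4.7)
The plan is to treat $\z_\alpha$ as an infinite-dimensional Ornstein-Uhlenbeck process on the M-type 2 Banach space $\mathrm{X}=\V\cap\dot{\H}^2_p(\mathbb{T}^3)$, and to verify four things in order: (a) the stochastic convolution in \eqref{oup} is well-defined as an $\mathrm{X}$-valued It\^o integral; (b) the resulting process solves \eqref{OUPe} and is stationary; (c) it satisfies the second moment bound \eqref{E-OUP}; (d) this bound tends to $0$ as $\alpha\to\infty$. Since $(\mu\A+\alpha I)$ generates an analytic semigroup of contractions on $\mathrm{X}$ (after a standard shift), the OU-type formula from the theory of SPDEs in M-type 2 Banach spaces (in the sense of Brze\'zniak's framework) will give all four once the radonifying bounds are controlled.

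The main technical point is (a), and this is where Assumption \ref{assump} enters decisively. The idea is to factor the semigroup as $e^{-s\mu\A}=\bigl(e^{-s\mu\A}\A^{\delta}\bigr)\circ \A^{-\delta}$, where $\A^{-\delta}\colon\K\to\mathrm{X}$ is $\gamma$-radonifying by hypothesis, and the analytic-semigroup estimate $\|e^{-s\mu\A}\A^{\delta}\|_{\mathcal{L}(\mathrm{X})}\le C s^{-\delta}$ follows from standard fractional-power theory. Because composition of a bounded operator with a $\gamma$-radonifying operator is $\gamma$-radonifying, we obtain
\begin{align*}
\|e^{-s(\mu\A+\alpha I)}\|_{\gamma(\K,\mathrm{X})} \le C\,e^{-\alpha s} s^{-\delta}\,\|\A^{-\delta}\|_{\gamma(\K,\mathrm{X})},
\end{align*}
so $\int_0^{\infty} e^{-2\alpha s}\|e^{-s\mu\A}\|^2_{\gamma(\K,\mathrm{X})}\d s<\infty$ precisely because $2\delta<1$; this is why the restriction $\delta<1/2$ is imposed. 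The It\^o isometry on the M-type 2 Banach space $\mathrm{X}$ then yields both the existence of the integral and the bound \eqref{E-OUP} at once.

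For (b), once the stochastic convolution exists, a direct computation (or a standard stochastic Fubini argument combined with the semigroup identity $e^{-(t-s)(\mu\A+\alpha I)}=e^{-(t-r)(\mu\A+\alpha I)}e^{-(r-s)(\mu\A+\alpha I)}$) shows that $\z_\alpha$ satisfies the mild form of \eqref{OUPe}, and hence in its strong form using the analyticity of the semigroup. Stationarity follows from the change of variables $s\mapsto s+t$ in \eqref{oup} together with the stationarity of the increments of the two-sided cylindrical Wiener process $\mathrm{W}$. Finally, for the limit assertion in (d), apply dominated convergence in the integral $\int_0^\infty e^{-2\alpha s}\|e^{-s\mu\A}\|^2_{\gamma(\K,\mathrm{X})}\d s$: the integrand is bounded by $Cs^{-2\delta}$ near zero and decays exponentially at infinity, while for each $s>0$ the factor $e^{-2\alpha s}\to 0$ as $\alpha\to\infty$. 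The main obstacle throughout is the control of the radonifying norm of the semigroup, which is essentially resolved by Assumption \ref{assump}; once that is handled, the rest of the proposition reduces to standard manipulations of the Wiener integral and the OU semigroup.
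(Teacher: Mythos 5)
Your argument is correct and is essentially the standard proof of this result, which the paper does not prove itself but imports verbatim from Proposition 6.10 of \cite{BL}: the factorization $e^{-s\mu\A}=\bigl(e^{-s\mu\A}\A^{\delta}\bigr)\A^{-\delta}$, the left-ideal property of $\gamma$-radonifying operators, and the M-type 2 Burkholder inequality (an inequality, not an isometry, which is why \eqref{E-OUP} carries a constant $C$) are exactly the ingredients used there, with $\delta<1/2$ entering precisely through the integrability of $s^{-2\delta}$ at $s=0$. The one point to make explicit is integrability at $s=\infty$ when $\alpha=0$ (a case the paper does use, e.g.\ in Proposition \ref{alpha_ind}): the crude bound $C e^{-\alpha s}s^{-\delta}$ does not decay then, and one must invoke the spectral gap $\lambda_1>0$ of $\A$ on mean-zero periodic fields to get exponential decay of $\|e^{-\mu s\A}\|_{\gamma(\mathrm{K},\mathrm{X})}$, which you assume implicitly in your step (d).
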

	\begin{remark}\label{stationary}
		By Proposition 4.1, \cite{KM}, we obtain the following result for the Ornstein-Uhlenbeck process given in Proposition \ref{SOUP}:
		\begin{align}\label{O-U_conti}
		\z_\alpha\in\mathrm{L}^{q} (a, b; \mathrm{X}),
		\end{align}
		where $q\in [1, \infty].$
	\end{remark}
	\subsection{Metric dynamical system}
	Since $\z_{\alpha}(t)$ is a Gaussian random vector, by the Burkholder inequality (see \cite{Ondrejat}), for each $p\geq 2,$ there exists a constant $C_p > 0$ such that 
	\begin{align*}
	\mathbb{E}[\|\z_{\alpha}(t)\|^p_{\mathrm{X}} ]\leq C_p (\mathbb{E}[\|\z_{\alpha}(t)\|^2_{\mathrm{X}}])^{p/2}.
	\end{align*}	
	In particular, for $p=4$, we obtain 
	\begin{align*}
	\mathbb{E}[\|\z_{\alpha}(t)\|^4_{\mathrm{X}} ]\leq C (\mathbb{E}[\|\z_{\alpha}(t)\|^2_{\mathrm{X}}])^{2}.
	\end{align*}
	Moreover, 
	\begin{align}\label{alpha_con}
	\mathbb{E}[\|\z_{\alpha}(t)\|^{4}_{\mathrm{X}}] \  \to   0\ \text{ as } \ \alpha\to \infty.
	\end{align}
	By Proposition \ref{SOUP}, the process $\z_{\alpha}(t), \ t\in \R $ is an $\mathrm{X}$-valued stationary and ergodic. Hence, by the strong law of large numbers (see \cite{DZ}), we have 
	\begin{align}\label{SLLN}
	\lim_{t \to \infty} \frac{1}{t} \int_{-t}^{0} \|\z_{\alpha}(s)\|^{4}_{\mathrm{X}} ds = \mathbb{E} [\|\z_{\alpha}(0)\|^{4}_{\mathrm{X}}], \ \ \mathbb{P}\text{-a.s. on } \C^{\xi}_{1/2}(\R; \mathrm{X}).
	\end{align}
	Therefore by Proposition \ref{SOUP}, we find $\alpha_0$ such that 
	\begin{align}\label{Bddns}
	\mathbb{E}[\|\z_{\alpha} (0)\|^{4}_{\mathrm{X}}] \leq \frac{\mu^4\lambda_1}{432},
	\end{align}  
	for all $\alpha \geq \alpha_0,$  where $\lambda_1$ is the Poincar\'e constant.
	
	Let us denote 
	$$\Omega_{\alpha}(\xi, \mathrm{E})=\{\omega\in \Omega(\xi, \mathrm{E}):\text{the equality \eqref{SLLN} holds true}\}.$$
	Therefore, we fix $\xi \in (\delta, 1/2)$ and set $$\Omega := \hat{\Omega}(\xi, \mathrm{E}) = \bigcap^{\infty}_{n=0} \Omega_{n}(\xi, \mathrm{E}).$$
	\begin{proposition}\label{m-DS}
		The quadruple $(\Omega, \hat{\mathscr{F}}, \hat{\mathbb{P}}, \hat{\theta})$ is a metric DS, where $\hat{\mathscr{F}}$, $\hat{\mathbb{P}}$, $\hat{\theta}$ are respectively the natural restrictions of $\mathscr{F}$, $\mathbb{P}$, and $\theta$ to $\Omega.$ For each $\omega\in \Omega,$ the limit in \eqref{SLLN} exists.
	\end{proposition}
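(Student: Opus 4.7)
The plan is to verify the four axioms of a metric dynamical system for the quadruple $(\Omega,\hat{\mathscr{F}},\hat{\mathbb{P}},\hat{\theta})$: measurability of the flow, the group property $\hat\theta_0=\mathrm{Id}$ and $\hat\theta_{t+s}=\hat\theta_t\circ\hat\theta_s$, the measure-preserving property $\hat\theta_t\hat{\mathbb{P}}=\hat{\mathbb{P}}$, and the fact that $\Omega\in\hat{\mathscr F}$ with $\hat{\mathbb{P}}(\Omega)=1$. Since $\theta$ is already known to be a measurable, measure-preserving flow on the ambient space $(\Omega(\xi,\mathrm{E}),\mathscr F,\mathbb{P})$, it suffices to show that $\Omega$ is $\theta$-invariant and of full $\mathbb{P}$-measure; the restrictions then automatically inherit the flow and measure-preserving properties.

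First, I would fix $n\in\mathbb{N}$ and study $\Omega_n(\xi,\mathrm{E})$. By Proposition~\ref{SOUP}, the Ornstein--Uhlenbeck process $\z_n(t)=\int_{-\infty}^t e^{-(t-s)(\mu\A+nI)}\d\text{W}(s)$ is $\mathrm X$-valued, stationary, and (being a Gaussian Markov process driven by a cylindrical Wiener noise on the separable space $\mathrm X$) ergodic with respect to the flow $\theta$. Since $\mathbb{E}\|\z_n(0)\|_{\mathrm X}^4<\infty$ by \eqref{E-OUP} and the Burkholder-type bound used to obtain \eqref{alpha_con}, Birkhoff's pointwise ergodic theorem gives
\begin{equation*}
\lim_{t\to\infty}\frac{1}{t}\int_{-t}^{0}\|\z_n(s)\|_{\mathrm X}^4\,\d s=\mathbb{E}[\|\z_n(0)\|_{\mathrm X}^4],\qquad\mathbb{P}\text{-a.s.}
\end{equation*}
Thus $\mathbb{P}(\Omega_n(\xi,\mathrm{E}))=1$, and by countable intersection $\mathbb{P}(\Omega)=1$, which in particular means $\Omega\in\mathscr F$ so that $\hat{\mathscr F}$ is well defined.

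Next I would verify that $\Omega$ is strictly $\theta$-invariant. Fix $t\in\mathbb{R}$ and $\omega\in\Omega_n(\xi,\mathrm{E})$. The key observation is the cocycle identity $\z_n(s,\theta_t\omega)=\z_n(s+t,\omega)$, valid by construction of $\z_n$ as a stationary stochastic convolution evaluated on paths shifted by $\theta_t$. Using this and a change of variable,
\begin{equation*}
\frac{1}{T}\int_{-T}^{0}\|\z_n(s,\theta_t\omega)\|_{\mathrm X}^4\,\d s=\frac{1}{T}\int_{-T+t}^{t}\|\z_n(r,\omega)\|_{\mathrm X}^4\,\d r,
\end{equation*}
and splitting the right-hand side as $\frac{1}{T}\int_{-T}^0+\frac{1}{T}\int_0^t-\frac{1}{T}\int_{-T}^{-T+t}$, the last two pieces vanish as $T\to\infty$ (the integrand is locally integrable in $s$ by Remark~\ref{stationary}), while the first converges to $\mathbb{E}[\|\z_n(0)\|_{\mathrm X}^4]$ because $\omega\in\Omega_n(\xi,\mathrm{E})$. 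Hence $\theta_t\omega\in\Omega_n(\xi,\mathrm{E})$, and taking the intersection over $n$ gives $\theta_t\Omega\subseteq\Omega$; the reverse inclusion follows by applying the same argument to $\theta_{-t}$.

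With $\Omega\in\mathscr F$, $\mathbb{P}(\Omega)=1$, and $\theta_t\Omega=\Omega$ for every $t$, the restrictions $\hat{\mathscr F}:=\mathscr F|_\Omega$, $\hat{\mathbb{P}}:=\mathbb{P}|_{\hat{\mathscr F}}$, and $\hat\theta_t:=\theta_t|_\Omega$ are well defined, $\hat\theta$ is jointly measurable (as a restriction of a jointly measurable map), satisfies $\hat\theta_0=\mathrm{Id}$ and $\hat\theta_{t+s}=\hat\theta_t\hat\theta_s$, and preserves $\hat{\mathbb{P}}$ since for any $B\in\hat{\mathscr F}$, $\hat{\mathbb{P}}(\hat\theta_t^{-1}B)=\mathbb{P}(\theta_t^{-1}B)=\mathbb{P}(B)=\hat{\mathbb{P}}(B)$. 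The final assertion that the limit in \eqref{SLLN} exists for every $\omega\in\Omega$ is immediate from the very definition of $\Omega=\bigcap_{n=0}^\infty\Omega_n(\xi,\mathrm{E})$. The main technical point to be careful about is the $\theta$-invariance of $\Omega_n$, which rests on the cocycle identity for $\z_n$ and the negligible contribution of the boundary integrals; the rest is bookkeeping.
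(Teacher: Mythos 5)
The paper states Proposition~\ref{m-DS} without any proof (it is imported from \cite{BL} and the authors' earlier work \cite{KM1}), so there is no in-paper argument to compare yours against; judged on its own, your proof is correct and is the standard one. The two substantive points are handled properly: $\mathbb{P}(\Omega_n(\xi,\mathrm{E}))=1$ is exactly what \eqref{SLLN} already asserts (so your re-derivation via Birkhoff is consistent with the text), and the strict $\theta$-invariance of each $\Omega_n(\xi,\mathrm{E})$ rests, as you say, on the perfect-stationarity identity $\z_n(s)(\theta_t\omega)=\z_n(s+t)(\omega)$ for the stochastic convolution \eqref{oup} on the canonical space with the Wiener shift. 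Your treatment of the boundary terms is the right one; it is worth emphasizing that $\frac1T\int_{-T}^{-T+t}\|\z_n(r,\omega)\|^4_{\mathrm{X}}\,\d r\to0$ not merely because the integrand is locally integrable, but because the Ces\`aro limit over $[-T,0]$ exists for $\omega\in\Omega_n(\xi,\mathrm{E})$ (write it as the difference of two Ces\`aro averages converging to the same limit), so this step genuinely uses membership in $\Omega_n$. Two bookkeeping items are glossed over: you must first note $\Omega_n(\xi,\mathrm{E})\in\mathscr{F}$ before asserting $\mathbb{P}(\Omega)=1$ (this follows since $\omega\mapsto\frac1t\int_{-t}^0\|\z_n(s)\|^4_{\mathrm{X}}\,\d s$ is measurable and, the integrand being nonnegative, the limit may be tested along integer $t$), and the measure-preservation of the Wiener shift for $\mathbb{P}$ on $\Omega(\xi,\mathrm{E})$ is being imported from \cite{Brze,BL} rather than proved. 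Neither is a genuine gap.
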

	The following result is the important consequence of \eqref{SLLN} and \eqref{Bddns}.
	\begin{corollary}\label{Bddns1}
		For each $\omega\in \Omega,$ there exists $t_0=t_0 (\omega) \geq 0 $ such that 
		\begin{align}\label{Bddns2}
		\frac{216}{\mu^3} \int_{-t}^{0} \|\z_{\alpha}(s)\|^{4}_{\V} \d s \leq	\frac{216}{\mu^3} \int_{-t}^{0} \|\z_{\alpha}(s)\|^{4}_{\mathrm{X}} \d s \leq \frac{\mu \lambda_1 t}{2}, \ \  t\geq t_0.
		\end{align}
	\end{corollary}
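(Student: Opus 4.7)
\textbf{Proof proposal for Corollary \ref{Bddns1}.}
The strategy combines two ingredients already at hand: a pointwise norm comparison between $\V$ and $\mathrm{X}$, and the Birkhoff-type pathwise limit \eqref{SLLN}, calibrated by the quantitative expectation bound \eqref{Bddns}.

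The first inequality is essentially a tautology. By construction, $\mathrm{X} = \V \cap \dot{\H}^{2}_p(\mathbb{T}^3)$ is equipped with the norm $\|\cdot\|_{\mathrm{X}} = \|\cdot\|_{\V} + \|\cdot\|_{\dot{\H}^{2}_p}$, so $\|\z_\alpha(s)\|_{\V} \leq \|\z_\alpha(s)\|_{\mathrm{X}}$ holds pointwise in $s$; raising to the fourth power, integrating on $(-t,0)$, and multiplying by $\frac{216}{\mu^3}$ gives the first inequality of the claim.

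For the second inequality, fix an arbitrary $\omega\in\Omega=\hat{\Omega}(\xi,\mathrm{E})$. By the definition of $\Omega$, this $\omega$ belongs to every $\Omega_n(\xi,\mathrm{E})$, $n\in\N$, so the strong law of large numbers \eqref{SLLN} is realised along the path $\omega$, giving
\begin{align*}
\lim_{t\to\infty}\frac{1}{t}\int_{-t}^{0}\|\z_\alpha(s)\|_{\mathrm{X}}^{4}\,\d s = \E\bigl[\|\z_\alpha(0)\|_{\mathrm{X}}^{4}\bigr].
\end{align*}
Combining this with \eqref{Bddns} and the arithmetic identity $\frac{216}{\mu^3}\cdot\frac{\mu^{4}\lambda_1}{432}=\frac{\mu\lambda_1}{2}$, one obtains
\begin{align*}
\lim_{t\to\infty}\frac{1}{t}\left(\frac{216}{\mu^3}\int_{-t}^{0}\|\z_\alpha(s)\|_{\mathrm{X}}^{4}\,\d s\right) = \frac{216}{\mu^3}\,\E\bigl[\|\z_\alpha(0)\|_{\mathrm{X}}^{4}\bigr] \leq \frac{\mu\lambda_1}{2}.
\end{align*}

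To convert this asymptotic inequality into an honest bound valid for every $t\geq t_0(\omega)$, I would invoke \eqref{alpha_con} to enlarge $\alpha_0$ if necessary so that $\E[\|\z_\alpha(0)\|_{\mathrm{X}}^{4}]$ is \emph{strictly} smaller than $\frac{\mu^{4}\lambda_1}{432}$; the resulting gap then provides the cushion needed so that, by the definition of convergence, there exists $t_0(\omega)\geq 0$ with $\frac{1}{t}\int_{-t}^{0}\|\z_\alpha(s)\|_{\mathrm{X}}^{4}\d s$ close enough to its limit that multiplying by $\frac{216}{\mu^3}$ still leaves a number not exceeding $\frac{\mu\lambda_1}{2}$ for every $t\geq t_0(\omega)$. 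Multiplying across by $t$ yields the stated inequality. The only subtle point is this strict-versus-weak distinction in \eqref{Bddns}, which \eqref{alpha_con} dissolves at no extra cost; otherwise there is no real obstacle, since every tool has been assembled in Proposition \ref{SOUP} and Proposition \ref{m-DS} above.
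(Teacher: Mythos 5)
Your proof is correct and follows exactly the route the paper intends: the corollary is stated there as an immediate consequence of \eqref{SLLN} and \eqref{Bddns} with no further written argument, and your write-up supplies precisely those details (the pointwise comparison $\|\cdot\|_{\V}\leq\|\cdot\|_{\mathrm{X}}$ and the pathwise ergodic limit calibrated by the constant $\tfrac{216}{\mu^3}\cdot\tfrac{\mu^4\lambda_1}{432}=\tfrac{\mu\lambda_1}{2}$). Your observation that the weak inequality in \eqref{Bddns} must be sharpened to a strict one (which \eqref{alpha_con} permits by enlarging $\alpha_0$) in order to pass from the limit statement to a bound valid for every $t\geq t_0(\omega)$ is a genuine subtlety the paper glosses over, and your fix is the right one.
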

	\subsection{Random dynamical system}
	Let us recall that Assumption \ref{assump} is satisfied and that $\delta$ has the property stated there. We also take  fixed $\mu, \beta > 0$ and some parameter $\alpha\geq 0$. We also fix $\xi \in (\delta, 1/2)$.
	
	Denote by $\v_{\varepsilon}^{\alpha}(t)=\u_{\varepsilon}(t) - \varepsilon\z_{\alpha}(\omega)(t)$, then $\v_{\varepsilon}^{\alpha}(t)$ satisfies the following abstract random dynamical system:
	\begin{equation}\label{cscbf}
	\left\{
	\begin{aligned}
	\frac{\d\v_{\varepsilon}(t)}{\d t} &= -\mu \A\v_{\varepsilon}(t) - \B(\v_{\varepsilon}(t) +\varepsilon \z(t)) - \beta \mathcal{C}(\v_{\varepsilon}(t) +\varepsilon \z(t)) +\varepsilon \alpha \z(t) + \f, \\
	\v_{\varepsilon}(0)&= \x -\varepsilon \z_{\alpha}(0).
	\end{aligned}
	\right.
	\end{equation}
	Because $\z_{\alpha}(\omega) \in \C_{1/2} (\mathbb{R}, \mathrm{X}),  \z_{\alpha}(\omega)(0)$ is well defined element of $\H$.
	In what follows, we give the definition of weak solution of the system \eqref{cscbf}.
	\begin{definition}
		Assume that $\x\in \H$ and $\f\in \H$. Let $T>0$ be any fixed time, the function $\v_{\varepsilon}(\cdot)$ is called a weak solution of the problem \eqref{cscbf} on time interval $[0, T]$, if $$\v_{\varepsilon}\in \mathrm{C}([0,T];\H)\cap\mathrm{L}^2(0, T;\V)\cap\mathrm{L}^{r+1}(0,T;\widetilde{\L}^{r+1})$$ and it satisfies 
		\begin{itemize}
			\item [(i)] for any $\psi\in \V\cap\widetilde{\L}^{r+1},$ 
			\begin{align}\label{W-CSCBF}
			\bigg\langle\frac{\d\v_{\varepsilon}(t)}{\d t}, \psi\bigg\rangle =  - \big\langle \mu \A\v_{\varepsilon}(t)+\B(\v_{\varepsilon}(t)+\varepsilon\z(t))+\beta \mathcal{C}(\v_{\varepsilon}(s)+\varepsilon\z(t)) , \psi\big\rangle +  ( \f +\varepsilon \z(t), \psi).
			\end{align}
			\item [(ii)] $\v_{\varepsilon}(\cdot)$ satisfies the following initial data$$\v_{\varepsilon}(0)=\x-\varepsilon\z_{\alpha}(0).$$
		\end{itemize}
	\end{definition}
	Since, $\u_{\varepsilon}(\cdot)$ is the unique solution to the problem \eqref{S-CBF} and $\z_{\alpha}(\cdot)$ is the unique solution to the problem \eqref{OUPe}, one can easily obtain a unique solution $\v_{\varepsilon}^{\alpha}(t)$  to the problem \eqref{cscbf}. In the next theorem, we take $\f$ is depending on $t$. 
	\begin{theorem}\label{RDS_Conti}
		For $r\geq3$ ($r>3$, for any $\mu$ and $\beta$, and $r=3$ for $2\beta\mu\geq1$),	assume that, for some $T >0$ fixed, $\x_n \to \x$ in $\H$, 
		\begin{align*}
		\z_n \to \z \ \text{ in }\ \mathrm{L}^{\infty} (0, T; \V)\cap\mathrm{L}^{r+1}(0,T; \D(\A))\  \text{ and }\  \f_n \to \f \ \text{ in }\ \mathrm{L}^2 (0, T; \H). 
		\end{align*}
		Let us denote by $\v_{\varepsilon}(t, \z)\x$ for the solution of the problem \eqref{cscbf} and by $\v_{\varepsilon}(t, \z_n)\x_n$ for the solution of the problem \eqref{cscbf} with $\z, \f, \x$ being replaced by $\z_n, \f_n, \x_n$. Then $$\v_{\varepsilon}(\cdot, \z_n)\x_n \to \v_{\varepsilon}(\cdot, \z)\x \ \text{ in }\ \mathrm{C}([0,T];\H)\cap\mathrm{L}^2 (0, T;\V)\cap\mathrm{L}^{r+1}(0,T;\wi\L^{r+1}).$$
		In particular, $\v_{\varepsilon}(T, \z_n)\x_n \to \v_{\varepsilon}(T, \z)\x$ in $\H$.
	\end{theorem}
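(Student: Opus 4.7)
Let me denote $\v_n := \v_{\varepsilon}(\cdot, \z_n)\x_n$, $\v := \v_{\varepsilon}(\cdot, \z)\x$, $\w_n := \v_n - \v$, $\mathbf{U}_n := \v_n + \varepsilon \z_n$, $\mathbf{U} := \v + \varepsilon \z$, and $\boldsymbol{\zeta}_n := \z_n - \z$, so that $\w_n = (\mathbf{U}_n - \mathbf{U}) - \varepsilon \boldsymbol{\zeta}_n$ and $\w_n(0) = (\x_n - \x) - \varepsilon \boldsymbol{\zeta}_n(0) \to 0$ in $\H$ (using $\z_n\to\z$ in $\mathrm{L}^\infty(0,T;\V)$ with continuous-in-time $\z$'s). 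My first step is a uniform a priori bound $\|\v_n\|_{\mathrm{L}^\infty(0,T;\H)\cap\mathrm{L}^2(0,T;\V)\cap\mathrm{L}^{r+1}(0,T;\wi\L^{r+1})} \leq K$ from the energy identity associated with \eqref{cscbf} (as in \cite{MTM1}), using that $\{\x_n\},\{\z_n\},\{\f_n\}$ are bounded; $\mathbf{U}_n$ then inherits the same bounds. Subtracting the equations for $\v_n$ and $\v$, taking the $\H$-pairing with $\w_n$, and splitting each nonlinear pairing via $\w_n = (\mathbf{U}_n - \mathbf{U}) - \varepsilon \boldsymbol{\zeta}_n$ yields
\begin{align*}
\tfrac{1}{2}\tfrac{\d}{\d t}\|\w_n\|_\H^2 + \mu\|\w_n\|_\V^2 + J_n = \varepsilon R_n + \varepsilon\alpha(\boldsymbol{\zeta}_n, \w_n) + (\f_n - \f, \w_n),
\end{align*}
where $J_n := \langle\B(\mathbf{U}_n)-\B(\mathbf{U}), \mathbf{U}_n-\mathbf{U}\rangle + \beta\langle\mathcal{C}(\mathbf{U}_n)-\mathcal{C}(\mathbf{U}), \mathbf{U}_n-\mathbf{U}\rangle$ and $R_n := \langle\B(\mathbf{U}_n)-\B(\mathbf{U}), \boldsymbol{\zeta}_n\rangle + \beta\langle\mathcal{C}(\mathbf{U}_n)-\mathcal{C}(\mathbf{U}), \boldsymbol{\zeta}_n\rangle$.

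For the ``diagonal'' part $J_n$, I would combine the identity \eqref{441}, the estimate \eqref{2.16} applied to the pair $(\mathbf{U}_n, \mathbf{U})$ in lieu of $(\u_1, \u_2)$, and the coercivity \eqref{MO_c} to obtain, for $r>3$,
$$J_n \geq -\tfrac{\mu}{4}\|\mathbf{U}_n-\mathbf{U}\|_\V^2 - \eta_1\|\mathbf{U}_n-\mathbf{U}\|_\H^2 + \tfrac{3\beta}{8}\||\mathbf{U}|^{(r-1)/2}(\mathbf{U}_n-\mathbf{U})\|_\H^2 + \tfrac{\beta}{2}\||\mathbf{U}_n|^{(r-1)/2}(\mathbf{U}_n-\mathbf{U})\|_\H^2,$$
while Theorem \ref{thm2.3} yields the cleaner $J_n \geq 0$ in the case $r=3$, $2\beta\mu\geq 1$. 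Using $\|\mathbf{U}_n-\mathbf{U}\|_\V^2 \leq 2\|\w_n\|_\V^2 + 2\varepsilon^2\|\boldsymbol{\zeta}_n\|_\V^2$ (and its $\H$-analog) lets me half-absorb the offending negative $\V$-term into $\mu\|\w_n\|_\V^2$, preserving $\tfrac{\mu}{2}\|\w_n\|_\V^2$ on the left plus harmless $\|\boldsymbol{\zeta}_n\|_\V^2$, $\|\boldsymbol{\zeta}_n\|_\H^2$ remainders.

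The principal obstacle is the cross term $\varepsilon R_n$, where the nonlinearities no longer pair with their ``natural'' argument and monotonicity is lost; the lost structure must be traded for the extra regularity of $\boldsymbol{\zeta}_n$. For the $\B$-piece I would decompose $\langle\B(\mathbf{U}_n)-\B(\mathbf{U}), \boldsymbol{\zeta}_n\rangle = -b(\mathbf{U}_n, \boldsymbol{\zeta}_n, \mathbf{U}_n-\mathbf{U}) - b(\mathbf{U}_n-\mathbf{U}, \boldsymbol{\zeta}_n, \mathbf{U})$ and bound each trilinear term by H\"older with exponents $(3,6,2)$, using the 3D embedding $\D(\A)\hookrightarrow \mathrm{W}^{1,6}(\mathbb{T}^3)$ and the interpolation $\|\cdot\|_{\L^3} \leq C\|\cdot\|_\H^{1/2}\|\cdot\|_\V^{1/2}$, to arrive at $|\langle\B(\mathbf{U}_n)-\B(\mathbf{U}), \boldsymbol{\zeta}_n\rangle| \leq C\bigl(\|\mathbf{U}_n\|_\V^{1/2}+\|\mathbf{U}\|_\V^{1/2}\bigr)\|\boldsymbol{\zeta}_n\|_{\D(\A)}\|\mathbf{U}_n-\mathbf{U}\|_\H$. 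Young's inequality then splits this into a $\|\w_n\|_\H^2$-piece (handled by Gronwall) plus a remainder $C(\|\mathbf{U}_n\|_\V+\|\mathbf{U}\|_\V)\|\boldsymbol{\zeta}_n\|_{\D(\A)}^2$ whose $\mathrm{L}^1(0,T)$-norm vanishes by Cauchy--Schwarz in time, since $\|\mathbf{U}_n\|_\V\in\mathrm{L}^2(0,T)$ uniformly and $\boldsymbol{\zeta}_n\to 0$ in $\mathrm{L}^4(0,T;\D(\A))$ (a consequence of $r+1\geq 4$). For the $\mathcal{C}$-piece, the pointwise bound $|\mathcal{C}(\mathbf{U}_n)-\mathcal{C}(\mathbf{U})| \leq C(|\mathbf{U}_n|^{r-1}+|\mathbf{U}|^{r-1})|\mathbf{U}_n-\mathbf{U}|$ and H\"older in the dual pair $(\wi\L^{r+1}, \wi\L^{(r+1)/r})$ produce a factor $\|\boldsymbol{\zeta}_n\|_{\wi\L^{r+1}}\to 0$ multiplying weighted $\L^2$ norms of the form $\||\mathbf{U}_n|^{(r-1)/2}(\mathbf{U}_n-\mathbf{U})\|_\H$ and $\||\mathbf{U}|^{(r-1)/2}(\mathbf{U}_n-\mathbf{U})\|_\H$, which are absorbed into the corresponding positive terms already present in the $J_n$-lower bound.

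Assembling everything yields
$$\tfrac{\d}{\d t}\|\w_n\|_\H^2 + \tfrac{\mu}{2}\|\w_n\|_\V^2 + (\text{nonneg weighted $\L^2$ terms}) \leq \rho_n(t)\|\w_n\|_\H^2 + \Psi_n(t),$$
where $\{\rho_n\}$ is uniformly bounded in $\mathrm{L}^1(0,T)$ and $\|\Psi_n\|_{\mathrm{L}^1(0,T)} \to 0$. The classical Gronwall lemma with $\|\w_n(0)\|_\H\to 0$ delivers $\w_n\to 0$ in $\mathrm{C}([0,T];\H)$; time-integration of the inequality yields $\mathrm{L}^2(0,T;\V)$-convergence; and inequality \eqref{a215} applied to $\mathbf{U}_n-\mathbf{U}$, together with the vanishing weighted $\L^2$ remainders and $\boldsymbol{\zeta}_n\to 0$ in $\mathrm{L}^{r+1}(0,T;\wi\L^{r+1})$, transfers through $\w_n = (\mathbf{U}_n-\mathbf{U}) - \varepsilon\boldsymbol{\zeta}_n$ to $\mathrm{L}^{r+1}(0,T;\wi\L^{r+1})$-convergence of $\w_n$. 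In particular $\v_\varepsilon(T, \z_n)\x_n \to \v_\varepsilon(T, \z)\x$ in $\H$. The reason the absorption of the weighted $\L^2$ remainders closes at all is the coercivity of the Forchheimer term $\mathcal{C}$, which is exactly why the restriction to periodic domains (via the identity \eqref{3}) is essential in the above analysis.
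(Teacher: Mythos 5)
Your proposal follows the same overall architecture as the paper's proof: the energy identity for the difference, the splitting of the test function via $\w_n=(\mathbf{U}_n-\mathbf{U})-\varepsilon\boldsymbol{\zeta}_n$ so that the "diagonal" pairings $\langle\B(\mathbf{U}_n)-\B(\mathbf{U}),\mathbf{U}_n-\mathbf{U}\rangle$ and $\langle\mathcal{C}(\mathbf{U}_n)-\mathcal{C}(\mathbf{U}),\mathbf{U}_n-\mathbf{U}\rangle$ are handled by \eqref{441}, \eqref{2.16} and \eqref{MO_c}, the absorption of the cross terms involving $\mathcal{C}$ into the coercive weighted $\L^2$ norms, and Gronwall; your explicit uniform a priori bound on $\v_n$ is a useful addition that the paper leaves implicit. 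One genuine (and harmless) deviation is your treatment of the convective cross term: you keep the difference $\langle\B(\mathbf{U}_n)-\B(\mathbf{U}),\boldsymbol{\zeta}_n\rangle$ and exploit the $\D(\A)\hookrightarrow\mathrm{W}^{1,6}$ regularity of $\boldsymbol{\zeta}_n$ via a $(3,6,2)$ H\"older split, whereas the paper estimates $\langle\B(\mathbf{U}_n),\varepsilon\hat{\z}_n\rangle$ and $\langle\B(\mathbf{U}),\varepsilon\hat{\z}_n\rangle$ separately through \eqref{212}, which needs only $\|\hat{\z}_n\|_{\V}\to0$ and no difference structure; both close, the paper's being marginally more economical in the regularity it consumes.

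There is, however, a real flaw in your $r=3$ case. You claim that Theorem \ref{thm2.3} yields $J_n\geq 0$, but that theorem concerns the full operator $\G=\mu\A+\B+\beta\mathcal{C}$, so what it actually gives is $\mu\|\mathbf{U}_n-\mathbf{U}\|_{\V}^2+J_n\geq 0$, i.e. $J_n\geq-\mu\|\mathbf{U}_n-\mathbf{U}\|_{\V}^2$. Since $\mathbf{U}_n-\mathbf{U}=\w_n+\varepsilon\boldsymbol{\zeta}_n$, this lower bound costs up to $2\mu\|\w_n\|_{\V}^2$ plus a $\boldsymbol{\zeta}_n$-remainder, which cannot be absorbed by the single $\mu\|\w_n\|_{\V}^2$ available on the left: you lose all $\V$-coercivity and even pick up a wrong-signed term, so neither the Gronwall step in the stated form nor the $\mathrm{L}^2(0,T;\V)$ convergence survives. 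The repair is to argue as in the $r>3$ case but with the critical exponents: bound $|\langle\B(\mathbf{U}_n-\mathbf{U},\mathbf{U}_n-\mathbf{U}),\mathbf{U}\rangle|\leq\frac{1}{2\beta}\|\mathbf{U}_n-\mathbf{U}\|_{\V}^2+\frac{\beta}{2}\||\mathbf{U}|(\mathbf{U}_n-\mathbf{U})\|_{\H}^2$, absorb the second term by \eqref{MO_c}, expand $\|\mathbf{U}_n-\mathbf{U}\|_{\V}^2\leq\|\w_n\|_{\V}^2+2\|\w_n\|_{\V}\|\boldsymbol{\zeta}_n\|_{\V}+\|\boldsymbol{\zeta}_n\|_{\V}^2$ so that only $\frac{1}{2\beta}\|\w_n\|_{\V}^2$ competes with $\mu\|\w_n\|_{\V}^2$ (the cross terms go into the vanishing remainder $\Psi_n$ using the uniform $\mathrm{L}^2(0,T;\V)$ bounds), and invoke $2\beta\mu\geq 1$ to retain $\big(\mu-\frac{1}{2\beta}\big)\|\w_n\|_{\V}^2\geq 0$; note that the $\mathrm{L}^2(0,T;\V)$ convergence then requires $2\beta\mu>1$ strictly, exactly as in the paper. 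With this correction the rest of your argument, including the transfer to $\mathrm{L}^{r+1}(0,T;\wi\L^{r+1})$ via \eqref{a215}, is sound.
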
	
	\begin{proof}
		In order to simplify the proof, we introduce the following notations:
		\begin{align*}\v_n (t) &= \v_{\varepsilon}(t, \z_n)\x_n, \ \  \v(t) = \v_{\varepsilon}(t, \z)\x,\ \   \y_n (t)= \v_{\varepsilon}(t, \z_n)\x_n - \v_{\varepsilon}(t, \z)\x, \ \  t\in[0, T],\\
		\hat{ \z}_n &= \z_n - \z, \ \  \hat{ \f}_n = \f_n - \f.\end{align*}
		It is easy to see that $\y_n(\cdot)$ solves the following initial value problem:
		\begin{equation}\label{cscbf_n}
		\left\{
		\begin{aligned}
		\frac{\d\y_n(t)}{\d t} &= -\mu \A\y_n (t) - \B(\v_n (t) + \varepsilon\z_n (t)) + \B(\v(t) + \varepsilon\z(t)) - \beta \mathcal{C}(\v_n (t) + \varepsilon\z_n (t)) \\& \ \ \ + \beta \mathcal{C}(\v(t) +\varepsilon \z(t)) + \varepsilon\alpha \hat{ \z}_n + \hat{ \f}, \\
		\y_n(0)&= \x_n - \x.
		\end{aligned}
		\right.
		\end{equation}
		Taking the inner product with $\y_n(\cdot)$ to the first equation of \eqref{cscbf_n}, we get 
		\begin{align}\label{Energy_esti_n_1}
		&	\frac{1}{2} \frac{\d}{\d t}\|\y_n (t)\|^2_{\H}  \nonumber\\&=-  \mu \|\y_n(t)\|^2_{\V} - \big\langle\B(\v_n(t)+\varepsilon\z_n(t))- \B(\v(t)+\varepsilon\z(t)), \y_n(t)\big\rangle\nonumber\\&\quad - \beta\big\langle\mathcal{C}(\v_n(t)+\varepsilon\z_n(t))-\mathcal{C}(\v(t)+\varepsilon\z(t)), \y_n(t)\big\rangle +\varepsilon \alpha (\hat{ \z}_n(t), \y_n(t)) + (\hat{ \f}(t),\y_n(t)), \nonumber\\&=-  \mu \|\y_n(t)\|^2_{\V}+\big\langle\B(\v_n(t)+\varepsilon\z_n(t)), \varepsilon\hat{ \z}_n(t)\big\rangle-\big\langle\B(\v(t)+\varepsilon\z(t)), \varepsilon\hat{ \z}_n(t)\big\rangle\nonumber\\&\ \ \ -\big\langle\B(\v_n(t)+\varepsilon\z_n(t))- \B(\v(t)+\varepsilon\z(t)), (\v_n(t)+\varepsilon\z_n(t))-(\v(t)+\varepsilon\z(t))\big\rangle\nonumber\\&\ \ \ +\beta\big\langle\mathcal{C}(\v_n(t)+\varepsilon\z_n(t))-\mathcal{C}(\v(t)+\varepsilon\z(t)), \varepsilon\hat{ \z}_n(t)\big\rangle\nonumber\\&\ \ \ - \beta\big\langle\mathcal{C}(\v_n(t)+\varepsilon\z_n(t))-\mathcal{C}(\v(t)+\varepsilon\z(t)),(\v_n(t)+\varepsilon\z_n(t))-(\v(t)+\varepsilon\z(t))\big\rangle\nonumber\\&\ \ \ + \varepsilon\alpha (\hat{ \z}_n(t), \y_n(t)) + (\hat{ \f}(t),\y_n(t)),
		\end{align}
		for a.e. $t\in[0,T]$. We consider the cases $r>3$ and $r=3$ separately. 
		\vskip 0.2 cm
		\noindent 
		\textbf{Case I:}  $r>3$.
		Using $0<\varepsilon\leq1$ and \eqref{212}, we have
		\begin{align}
		|\big\langle\B(\v_n+\varepsilon\z_n), \varepsilon\hat{ \z}_n\big\rangle|\leq&\|\v_n+\varepsilon\z_n\|_{\widetilde{\L}^{r+1}}^{\frac{r+1}{r-1}}\|\v_n+\varepsilon\z_n\|_{\H}^{\frac{r-3}{r-1}}\|\hat{ \z}_n\|_{\V}\nonumber\\\leq&C\|\v_n+\varepsilon\z_n\|_{\widetilde{\L}^{r+1}}^{\frac{r+1}{r-1}}\|\v_n+\varepsilon\z_n\|_{\V}^{\frac{r-3}{r-1}}\|\hat{ \z}_n\|_{\V},\label{Energy_esti_n_2}\\
		|\big\langle\B(\v+\varepsilon\z), \varepsilon\hat{ \z}_n\big\rangle|\leq&\|\v+\varepsilon\z\|_{\widetilde{\L}^{r+1}}^{\frac{r+1}{r-1}}\|\v+\varepsilon\z\|_{\H}^{\frac{r-3}{r-1}}\|\hat{ \z}_n\|_{\V}\nonumber\\\leq&C\|\v+\varepsilon\z\|_{\widetilde{\L}^{r+1}}^{\frac{r+1}{r-1}}\|\v+\varepsilon\z\|_{\V}^{\frac{r-3}{r-1}}\|\hat{ \z}_n\|_{\V}.\label{Energy_esti_n_3}
		\end{align}
		Using $0<\varepsilon\leq1$, \eqref{441} and \eqref{2.16}, we have
		\begin{align}\label{Energy_esti_n_4}
		&|\langle\B(\v_n+\varepsilon\z_n)-\B(\v+\varepsilon\z),(\v_n+\varepsilon\z_n)-(\v+\varepsilon\z)\rangle|\nonumber\\&\leq\frac{\mu }{4}\|(\v_n+\varepsilon\z_n)-(\v+\varepsilon\z)\|_{\V}^2+\frac{\beta}{8}\||\v+\varepsilon\z|^{\frac{r-1}{2}}((\v_n+\varepsilon\z_n)-(\v+\varepsilon\z))\|_{\H}^2\nonumber\\&\quad+\eta_1\|(\v_n+\varepsilon\z_n)-(\v+\varepsilon\z)\|_{\H}^2\nonumber\\&\leq\frac{\mu}{2}\|\y_n\|_{\V}^2+\frac{\mu}{2}\|\hat{\z}_n\|_{\V}^2+\frac{\beta}{8}\||\v+\varepsilon\z|^{\frac{r-1}{2}}((\v_n+\varepsilon\z_n)-(\v+\varepsilon\z))\|_{\H}^2\nonumber\\&\quad+2\eta_1\|\y_n\|_{\H}^2+2\eta_1\|\hat{\z}_n\|_{\H}^2\nonumber\\&\leq\frac{\mu}{2}\|\y_n\|_{\V}^2+C\|\hat{\z}_n\|_{\V}^2+\frac{\beta}{8}\||\v+\varepsilon\z|^{\frac{r-1}{2}}((\v_n+\varepsilon\z_n)-(\v+\varepsilon\z))\|_{\H}^2+2\eta_1\|\y_n\|_{\H}^2,
		\end{align}
		where $\eta_1=\frac{r-3}{\mu(r-1)}\left(\frac{16}{\beta\mu (r-1)}\right)^{\frac{2}{r-3}}.$
		Using Taylor's formula (Theorem 7.9.1, \cite{PGC}), we find 
		\begin{align}\label{Energy_esti_n_5}
		&\beta\big|\big\langle\mathcal{C}(\v_n+\varepsilon\z_n)-\mathcal{C}(\v+\varepsilon\z), \varepsilon\hat{ \z}_n\big\rangle\big|\nonumber\\&= \beta\bigg|\bigg\langle \int_{0}^{1} \big[\mathcal{C}'(\theta(\v_n+\varepsilon\z_n) + (1-\theta)(\v+\varepsilon\z))((\v_n+\varepsilon\z_n)-(\v+\varepsilon\z))\big] \d\theta ,\varepsilon\hat{ \z}_n\bigg\rangle\bigg|\nonumber\\ &\leq r\beta 2^{r-2} \||(\v_n+\varepsilon\z_n)|^{\frac{r-1}{2}}((\v_n+\varepsilon\z_n)-(\v+\varepsilon\z))\|_{\H} \|\v_n+\varepsilon\z_n\|_{\widetilde{\L}^{r+1}}^{\frac{r-1}{2}} \|\hat{\z}_n\|_{\widetilde{\L}^{r+1}}\nonumber\\&\quad+r\beta 2^{r-2} \||(\v+\varepsilon\z)|^{\frac{r-1}{2}}((\v_n+\varepsilon\z_n)-(\v+\varepsilon\z))\|_{\H} \|\v+\varepsilon\z\|_{\widetilde{\L}^{r+1}}^{\frac{r-1}{2}} \|\hat{\z}_n\|_{\widetilde{\L}^{r+1}}\nonumber\\&\leq\frac{\beta}{4} \||(\v_n+\varepsilon\z_n)|^{\frac{r-1}{2}}((\v_n+\varepsilon\z_n)-(\v+\varepsilon\z))\|^2_{\H}+C \|(\v_n+\varepsilon\z_n)\|_{\widetilde{\L}^{r+1}}^{r-1} \|\hat{\z}_n\|^2_{\widetilde{\L}^{r+1}}\nonumber\\&\quad+\frac{\beta}{8} \||(\v+\varepsilon\z)|^{\frac{r-1}{2}}((\v_n+\varepsilon\z_n)-(\v+\varepsilon\z))\|^2_{\H}+C \|(\v+\varepsilon\z)\|_{\widetilde{\L}^{r+1}}^{r-1} \|\hat{\z}_n\|^2_{\widetilde{\L}^{r+1}}
		\end{align}
		Making use of \eqref{MO_c}, we obtain 
		\begin{align}\label{Energy_esti_n_6}
		-&\beta\big\langle\mathcal{C}(\v_n+\varepsilon\z_n)-\mathcal{C}(\v+\varepsilon\z),(\v_n+\varepsilon\z_n)-(\v+\varepsilon\z)\big\rangle \nonumber\\&\leq-\frac{\beta}{2}\||\v_n+\varepsilon\z_n|^{\frac{r-1}{2}}((\v_n+\varepsilon\z_n)-(\v+\varepsilon\z))\|_{\H}^2-\frac{\beta}{2}\||\v+\varepsilon\z|^{\frac{r-1}{2}}((\v_n+\varepsilon\z_n)-(\v+\varepsilon\z))\|_{\H}^2.
		\end{align}
		Using H\"older's and Young's inequalities, we have
		\begin{align}
		\varepsilon\alpha|(\hat{ \z}_n,  \y_n)|& \leq \alpha \|\y_n\|_{\H} \|\hat{ \z}_n\|_{\H} \leq \frac{1 }{2} \|\y_n\|^2_{\H} + \frac{\alpha^2}{2} \|\hat{ \z}_n\|^2_{\H} \leq \frac{1}{2} \|\y_n\|^2_{\H} + \frac{\alpha^2C}{2} \|\hat{ \z}_n\|^2_{\V},\label{Energy_esti_n_7}\\
		|(\hat{ \f}_n,  \y_n)|& \leq  \|\y_n\|_{\H} \|\hat{ \f}_n\|_{\H} \leq \frac{1}{2} \|\y_n\|^2_{\H} + \frac{1}{2} \|\hat{ \f}_n\|^2_{\H}.\label{Energy_esti_n_8}
		\end{align}
		Combining  \eqref{Energy_esti_n_2}-\eqref{Energy_esti_n_8}, substituting it  in \eqref{Energy_esti_n_1}, and then using \eqref{a215}, we deduce that 
		\begin{align}\label{Energy_esti_n_9}
		&\frac{\d}{\d t} \|\y_n(t)\|_{\H}^2 + \mu\|\y_n(t)\|_{\V}^2+\frac{\beta}{2^{r-1}}\|\y_n(t)+\e\hat\z_n(t)\|_{\wi\L^{r+1}}^{r+1} \nonumber\\&\leq C\bigg\{\|\v_n(t)+\varepsilon\z_n(t)\|_{\widetilde{\L}^{r+1}}^{\frac{r+1}{r-1}}\|\v_n(t)+\varepsilon\z_n(t)\|_{\V}^{\frac{r-3}{r-1}}+\|\v(t)+\varepsilon\z(t)\|_{\widetilde{\L}^{r+1}}^{\frac{r+1}{r-1}}\|\v(t)+\varepsilon\z(t)\|_{\V}^{\frac{r-3}{r-1}}\bigg\}\nonumber\\&\qquad\times\|\hat{\z}_n(t)\|_{\V}+C\|\hat{\z}_n(t)\|_{\V}^2+\wi\eta_1\|\y_n(t)\|_{\H}^2+C\bigg\{\|(\v_n(t)+\varepsilon\z_n(t))\|_{\widetilde{\L}^{r+1}}^{r-1}\nonumber\\&\quad+\|(\v(t)+\varepsilon\z(t))\|_{\widetilde{\L}^{r+1}}^{r-1}\bigg\}\|\hat{\z}_n(t)\|^2_{\widetilde{\L}^{r+1}} + \|\hat{ \f}_n(t)\|^2_{\H},
		\end{align}
		where $\wi\eta_1=2(2\eta_1+1),$ for a.e. $t\in[0,T]$. By integrating the above inequality from $0$ to $t$, for $t\in[0,T]$, we get 
		\begin{align}\label{Energy_esti_n_10}
		&	\|\y_n(t)\|^2_{\H} + \mu \int_{0}^{t} \|\y_n(s) \|^2_{\V}\d s+\frac{\beta}{2^{r-1}}\int_0^t\|\y_n(s)+\e\hat\z_n(s)\|_{\wi\L^{r+1}}^{r+1}\d s\nonumber\\&\leq \|\y_n(0)\|^2_{\H} + \int_{0}^{t} \wi\eta_1 \|\y_n(s)\|^2_{\H} \d s + \int_{0}^{t} \text{K}_n(s)  \d s, 
		\end{align}
		for all $t\in[0,T]$,	where
		\begin{align*}
		\text{K}_n& = C\bigg\{\|\v_n+\varepsilon\z_n\|_{\widetilde{\L}^{r+1}}^{\frac{r+1}{r-1}}\|\v_n+\varepsilon\z_n\|_{\V}^{\frac{r-3}{r-1}}+\|\v+\varepsilon\z\|_{\widetilde{\L}^{r+1}}^{\frac{r+1}{r-1}}\|\v+\varepsilon\z\|_{\V}^{\frac{r-3}{r-1}}\bigg\}\|\hat{\z}_n\|_{\V}\nonumber\\&\quad+C\bigg\{\|\v_n+\varepsilon\z_n\|_{\widetilde{\L}^{r+1}}^{r-1}+\|\v+\varepsilon\z\|_{\widetilde{\L}^{r+1}}^{r-1}\bigg\} \|\hat{\z}_n\|^2_{\widetilde{\L}^{r+1}}+C \|\hat{ \z}_n\|^2_{\V}+\|\hat{ \f}_n\|^2_{\H}.
		\end{align*}
		for a.e. $t\in[0,T]$. Then by the classical Gronwall inequality, we arrive at 
		\begin{align}\label{Energy_esti_n_11}
		\|\y_n(t)\|^2_{\H}\leq \bigg(\|\y_n(0)\|^2_{\H}+ \int_{0}^{T} \text{K}_n(t)\d t\bigg) e^{\wi\eta_1 T}, 
		\end{align}
		for all $t\in [0,T]$.	On the other hand, we have
		\begin{align*}
		&\int_{0}^{T}\text{K}_n(t)\d t\\ =&\int_{0}^{T} \bigg[C\bigg\{\|\v_n(t)+\varepsilon\z_n(t)\|_{\widetilde{\L}^{r+1}}^{\frac{r+1}{r-1}}\|\v_n(t)+\varepsilon\z_n(t)\|_{\V}^{\frac{r-3}{r-1}}+\|\v(t)+\varepsilon\z(t)\|_{\widetilde{\L}^{r+1}}^{\frac{r+1}{r-1}}\|\v(t)+\varepsilon\z(t)\|_{\V}^{\frac{r-3}{r-1}}\bigg\}\\&\qquad\times\|\hat{\z}_n(t)\|_{\V}+C\bigg\{\|(\v_n(t)+\varepsilon\z_n(t))\|_{\widetilde{\L}^{r+1}}^{r-1}+\|(\v(t)+\varepsilon\z(t))\|_{\widetilde{\L}^{r+1}}^{r-1} \bigg\}\|\hat{\z}_n(t)\|^2_{\widetilde{\L}^{r+1}}\\&\quad+C \|\hat{ \z}_n(t)\|^2_{\V}+ \|\hat{ \f}_n(t)\|^2_{\H}\bigg]\d t\\
		\leq& CT^{1/2}\bigg[\|\v_n+\varepsilon\z_n\|^{\frac{r+1}{r-1}}_{\mathrm{L}^{r+1}(0,T;\widetilde{\L}^{r+1})}\|\v_n+\varepsilon\z_n\|^{\frac{r-3}{r-1}}_{\mathrm{L}^{2}(0,T;\V)} +\|\v+\varepsilon\z\|^{\frac{r+1}{r-1}}_{\mathrm{L}^{r+1}(0,T;\widetilde{\L}^{r+1})}\|\v+\varepsilon\z\|^{\frac{r-3}{r-1}}_{\mathrm{L}^{2}(0,T;\V)}\bigg]\\&\qquad\times\|\hat{\z}_n\|_{\mathrm{L}^{\infty}(0,T;\V)}+C\bigg[\|\v_n+\varepsilon\z_n\|^{r-1}_{\mathrm{L}^{r+1}(0,T;\widetilde{\L}^{r+1})}+\|\v+\varepsilon\z\|^{r-1}_{\mathrm{L}^{r+1}(0,T;\widetilde{\L}^{r+1})}\bigg]\|\hat{\z}_n\|^2_{\mathrm{L}^{r+1}(0,T;\widetilde{\L}^{r+1})}\\&\quad+CT\|\hat{\z}_n\|^2_{\mathrm{L}^{\infty}(0,T;\V)}+\|\hat{\f}_n\|^2_{\mathrm{L}^2(0,T;\H)}.
		\end{align*}
		Since, $\z_n\to\z$ in $\mathrm{L}^{r+1}(0,T;\D(\A))$ and by the Sobolev inequality, we have $$\|\hat{\z}_n\|_{\mathrm{L}^{r+1}(0,T;\widetilde{\L}^{r+1})}\leq C \|\hat{\z}_n\|_{\mathrm{L}^{r+1}(0,T;\D(\A))},$$ which implies that $\z_n\to\z$ in $\mathrm{L}^{r+1}(0,T;\widetilde{\L}^{r+1})$. Therefore, $\int_{0}^{T} \text{K}_n(t) \d t \to 0$ as $n\to \infty.$
		
		Since, $\|\y_n(0)\|_{\H} = \|\x_n- \x\|_{\H} \to 0 $ and $\int_{0}^{T} \text{K}_n(s) \d s \to 0$ as $n\to \infty$, by \eqref{Energy_esti_n_11} we infer that $\|\y_n(t)\|_{\H}\to 0$ as $n\to\infty$ uniformly in $t\in[0, T].$ In other words,  $$\v_{\varepsilon}(\cdot, \z_n)\x_n \to \v_{\varepsilon}(\cdot, \z)\x\ \ \text{ in }\  \C([0, T]; \H).$$
		From the inequality \eqref{Energy_esti_n_10}, we also have
		\begin{align*}
		&	\mu \int_{0}^{T} \|\y_n(s)\|^2_{\V}\d s+\frac{\beta}{2^{2r-1}}\int_0^T\|\y_n(t)\|_{\wi\L^{r+1}}^{r+1}\d t\nonumber\\&\leq 	\mu \int_{0}^{T} \|\y_n(s)\|^2_{\V}\d s+\frac{\beta}{2^{r-1}}\int_0^T\|\y_n(t)+\e\hat\z_n(t)\|_{\wi\L^{r+1}}^{r+1}\d t+\frac{\beta}{2^{r-1}}\int_0^T\|\hat\z_n(t)\|_{\wi\L^{r+1}}^{r+1}\d t\nonumber\\
		& \leq \|\y_n(0)\|^2_{\H} +\wi\eta_1T \sup_{s  \in [0, T]}\|\y_n(s)\|^2_{\H} + \int_{0}^{T} \text{K}_n(s) \d s+\frac{\beta}{2^{r-1}}\int_0^T\|\hat\z_n(t)\|_{\wi\L^{r+1}}^{r+1}\d t.
		\end{align*}
		Hence, $\int_{0}^{T} \|\y_n(s)\|^2_{\V}\d s, \int_0^T\|\y_n(t)\|_{\wi\L^{r+1}}^{r+1}\d t \to 0$ as $n\to \infty$ and therefore, $$\v_{\varepsilon}(\cdot, \z_n)\x_n \to \v_{\varepsilon}(\cdot, \z)\x\   \text{ in } \ \mathrm{L}^2(0, T; \V)\cap\mathrm{L}^{r+1}(0,T;\wi\L^{r+1}),$$ which completes the proof for $r>3$.
		\vskip 0.2cm
		\noindent
		\textbf{Case II:} $r=3$ and $2\beta\mu\geq1$.	By H\"older's inequality, \eqref{lady} and \eqref{poin}, we find 
		\begin{align}
		|\big\langle\B(\v_n+\varepsilon\z_n),\varepsilon \hat{ \z}_n\big\rangle|&\leq\|\v_n+\varepsilon\z_n\|_{\widetilde{\L}^{4}}^{2}\|\hat{ \z}_n\|_{\V}\leq C \|\v_n+\varepsilon\z_n\|_{\V}^{2}\|\hat{ \z}_n\|_{\V},\label{Energy_esti_n_12}\\
		|\big\langle\B(\v+\varepsilon\z), \varepsilon\hat{ \z}_n\big\rangle|&\leq\|\v+\varepsilon\z\|_{\widetilde{\L}^{4}}^{2}\|\hat{ \z}_n\|_{\V}\leq C\|\v+\varepsilon\z\|_{\V}^{2}\|\hat{ \z}_n\|_{\V}.\label{Energy_esti_n_13}
		\end{align}
		Using \eqref{441} and H\"older's inequality, we obtain 
		\begin{align}\label{Energy_esti_n_14}
		&|\langle\B(\v_n+\varepsilon\z_n)-\B(\v+\varepsilon\z),(\v_n+\varepsilon\z_n)-(\v+\varepsilon\z)\rangle|\nonumber\\&\leq\|(\v_n+\varepsilon\z_n)-(\v+\varepsilon\z)\|_{\V}\||\v+\varepsilon\z|((\v_n+\varepsilon\z_n)-(\v+\varepsilon\z))\|_{\H}\nonumber\\&\leq\frac{1}{2\beta}\|(\v_n+\varepsilon\z_n)-(\v+\varepsilon\z)\|_{\V}^2+\frac{\beta}{2}\||\v+\varepsilon\z|((\v_n+\varepsilon\z_n)-(\v+\varepsilon\z))\|_{\H}^2\nonumber\\&\leq\frac{1}{2\beta}(\|(\v_n-\v)\|_{\V}+\|(\z_n-\z)\|_{\V})^2+\frac{\beta}{2}\||\v+\varepsilon\z|((\v_n+\varepsilon\z_n)-(\v+\varepsilon\z))\|_{\H}^2\nonumber\\&\leq\frac{1}{2\beta}\|\y_n\|_{\V}^2+ \frac{1}{2\beta}\|\hat{ \z}_n\|_{\V}^2+\frac{1}{\beta}\|\v_n\|_{\V}\|\hat{ \z}_n\|_{\V}+\frac{1}{\beta}\|\v\|_{\V}\|\hat{ \z}_n\|_{\V}\nonumber\\&\quad+\frac{\beta}{2}\||\v+\varepsilon\z|((\v_n+\varepsilon\z_n)-(\v+\varepsilon\z))\|_{\H}^2.
		\end{align}
		Using H\"older's inequality, \eqref{lady} and \eqref{poin}, we have
		\begin{align}\label{Energy_esti_n_15}
		|\langle\mathcal{C}(\v_n+\varepsilon\z_n)-\mathcal{C}(\v+\varepsilon\z),\varepsilon \hat{ \z}_n\rangle|&\leq\big\{\|\v_n+\varepsilon\z_n\|^3_{\L^4}+\|\v+\varepsilon\z\|^3_{\L^4}\big\}\|\hat{ \z}_n\|_{\L^4}\nonumber\\&\leq C\big\{\|\v_n\|^3_{\L^4}+\|\v\|^3_{\L^4}+\|\z_n\|^3_{\V}+\|\z\|^3_{\V}\big\}\|\hat{ \z}_n\|_{\V}.
		\end{align}
		By \eqref{MO_c}, we obtain 
		\begin{align}\label{Energy_esti_n_16}
		&-\beta\big\langle\mathcal{C}(\v_n+\varepsilon\z_n)-\mathcal{C}(\v+\varepsilon\z),(\v_n+\varepsilon\z_n)-(\v+\varepsilon\z)\big\rangle\nonumber\\&\leq-\frac{\beta}{2}\||\v_n+\varepsilon\z_n|((\v_n+\varepsilon\z_n)-(\v+\varepsilon\z))\|_{\H}^2-\frac{\beta}{2}\||\v+\varepsilon\z|((\v_n+\varepsilon\z_n)-(\v+\varepsilon\z))\|_{\H}^2.
		\end{align}
		Using \eqref{Energy_esti_n_12}-\eqref{Energy_esti_n_16} along with \eqref{Energy_esti_n_7}-\eqref{Energy_esti_n_8}, \eqref{a215} in \eqref{Energy_esti_n_1}, we deduce that 
		\begin{align}\label{Energy_esti_n_19}
		&\frac{\d}{\d t} \|\y_n(t)\|_{\H}^2 + 2\bigg(\mu-\frac{1}{2\beta}\bigg)\|\y_n(t)\|_{\V}^2 \nonumber\\&\leq C\bigg\{ \|\v_n(t)+\varepsilon\z_n(t)\|_{\V}^{2} +\|\v(t)+\varepsilon\z(t)\|_{\V}^{2}+ \|\v_n(t)\|_{\V}+\|\v(t)\|_{\V}+\|\v_n(t)\|^3_{\L^4}+\|\v(t)\|^3_{\L^4}\nonumber\\&\quad\quad+\|\z_n(t)\|^3_{\V}+\|\z(t)\|^3_{\V}\bigg\}\|\hat{ \z}_n(t)\|_{\V}+2\|\y_n(t)\|^2_{\H}+C \|\hat{ \z}_n(t)\|^2_{\V}+ \|\hat{ \f}_n(t)\|^2_{\H},
		\end{align}
		for a.e. $t\in[0,T].$ Integrating the above inequality from $0$ to $t$, $t\in[0,T]$, we get 
		\begin{align}\label{Energy_esti_n_20}
		\|\y_n(t)\|^2_{\H} + 2\bigg(\mu-\frac{1}{2\beta}\bigg) \int_{0}^{t} \|\y_n(s) \|^2_{\V}\d s\leq \|\y_n(0)\|^2_{\H} + 2\int_{0}^{t}  \|\y_n(s)\|^2_{\H}\d s + \int_{0}^{t} \widetilde{\text{K}}_n(s)\d s,  
		\end{align}
		for all $t\in[0, T],$ where
		\begin{align*}
		\widetilde{\text{K}}_n =&C\bigg\{ \|\v_n+\varepsilon\z_n\|_{\V}^{2} +\|\v+\varepsilon\z\|_{\V}^{2}+ \|\v_n\|_{\V}+\|\v\|_{\V}+\|\v_n\|^3_{\L^4}+\|\v\|^3_{\L^4}+\|\z_n\|^3_{\V}+\|\z\|^3_{\V}\bigg\}\nonumber\\&\quad\times\|\hat{ \z}_n\|_{\V}+C \|\hat{ \z}_n\|^2_{\V}+ \|\hat{ \f}_n\|^2_{\H},
		\end{align*}
		for a.e. $t\in[0,T]$.	Then, for $2\beta\mu\geq1$, by the classical Gronwall inequality, we obtain 
		\begin{align}\label{Energy_esti_n_21}
		\|\y_n(t)\|^2_{\H}\leq \bigg(\|\y_n(0)\|^2_{\H}+ \int_{0}^{T} \widetilde{\text{K}}_n(s)\d s\bigg) e^{2 T},
		\end{align}
		for all $t\in [0,T]$. On the other hand, we have
		\begin{align*}
		&\int_{0}^{T}\widetilde{\text{K}}_n(s)\d s\nonumber\\ &
		\leq\bigg\{\|\v_n+\varepsilon\z_n\|_{\mathrm{L}^{2}(0,T;\V)}+\|\v+\varepsilon\z\|_{\mathrm{L}^{2}(0,T;\V)} +T^{1/2}\|\v_n\|_{\mathrm{L}^{2}(0,T;\V)}+T^{1/2}\|\v\|_{\mathrm{L}^{2}(0,T;\V)}\nonumber\\&\quad+T^{1/4}\|\v_n\|^3_{\mathrm{L}^4(0,T;\widetilde{\L}^4)}+T^{1/4}\|\v\|^3_{\mathrm{L}^4(0,T;\widetilde{\L}^4)} +T\|\z_n\|^3_{\mathrm{L}^{\infty}(0,T;\V)}+T\|\z\|^3_{\mathrm{L}^{\infty}(0,T;\V)} \bigg\}\|\hat{\z}_n\|_{\mathrm{L}^{\infty}(0,T;\V)}\\&\quad+T\|\hat{\z}_n\|^2_{\mathrm{L}^{\infty}(0,T;\V)}+\|\hat{\f}_n\|^2_{\mathrm{L}^2(0,T;\H)}.
		\end{align*}
		Therefore, $\int_{0}^{T} \widetilde{\text{K}}_n(s)\d s \to 0$ as $n\to \infty.$	Since, $\|\y_n(0)\|_{\H} = \|\x_n- \x\|_{\H} \to 0 $ and $\int_{0}^{T} \widetilde{\text{K}}_n(s)\d s \to 0$ as $n\to \infty$ , by \eqref{Energy_esti_n_21} we infer that $\|\y_n(t)\|_{\H}\to 0$ as $n\to\infty$ uniformly in $t\in[0, T].$ In other words, $$\v_{\varepsilon}(\cdot, \z_n)\x_n \to \v_{\varepsilon}(\cdot, \z)\x\   \text{ in } \ \C([0, T]; \H).$$
		From inequality \eqref{Energy_esti_n_20}, we also have
		\begin{align*}
		2\bigg(\mu-\frac{1}{2\beta}\bigg) \int_{0}^{T} \|\y_n(t) \|^2_{\V}\d t \leq \|\y_n(0)\|^2_{\H} +2T \sup_{s  \in [0, T]}\|\y_n(s)\|^2_{\H} + \int_{0}^{T} \widetilde{\text{K}}_n(t)\d t.
		\end{align*}
		Thus, for $2\beta\mu>1$, it is immediate that  $\int_{0}^{T} \|\y_n(t)\|^2_{\V}\d t \to 0$ as $n\to \infty$ and therefore, we arrive at $$\v_{\varepsilon}(\cdot, \z_n)\x_n \to \v_{\varepsilon}(\cdot, \z)\x\  \text{ in }\ \mathrm{L}^2(0, T; \V),$$ which completes the proof for the case $r=3$.
	\end{proof}
	\begin{definition}
		We define a map $\varphi_{\varepsilon}^{\alpha} : \mathbb{R}^+ \times \Omega \times \H \to \H$ by
		\begin{align}
		(t, \omega, x) \mapsto \v_{\varepsilon}^{\alpha}(t)  + \varepsilon\z_{\alpha}(\omega)(t) \in \H.
		\end{align}
	\end{definition}
	\begin{proposition}\label{alpha_ind}
		If $\alpha_1, \alpha_2 \geq 0$, then $\varphi_{\varepsilon}^{\alpha_1} = \varphi_{\varepsilon}^{\alpha_2}.$
	\end{proposition}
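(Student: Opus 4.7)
The plan is to show that the quantity $\u_\e^\alpha(t) := \v_\e^\alpha(t) + \e\z_\alpha(\omega)(t)$, which by definition equals $\varphi_\e^\alpha(t,\omega,\x)$, actually solves the original SCBF equation \eqref{S-CBF} with initial datum $\x$, regardless of the value of $\alpha \ge 0$. Once this is established, the pathwise uniqueness of strong solutions to \eqref{S-CBF} (stated in the theorem attributed to \cite{MTM1}) forces $\u_\e^{\alpha_1} = \u_\e^{\alpha_2}$, and hence $\varphi_\e^{\alpha_1} = \varphi_\e^{\alpha_2}$.

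The key step is a straightforward additive cancellation. Fix $\alpha \ge 0$ and recall that $\v_\e^\alpha$ satisfies \eqref{cscbf}, namely
\begin{equation*}
\d\v_\e^\alpha(t) = \bigl[-\mu\A\v_\e^\alpha(t) - \B(\v_\e^\alpha(t)+\e\z_\alpha(t)) - \beta\mathcal{C}(\v_\e^\alpha(t)+\e\z_\alpha(t)) + \e\alpha\z_\alpha(t) + \f\bigr]\d t,
\end{equation*}
while $\z_\alpha$ satisfies the OU equation \eqref{OUPe},
\begin{equation*}
\d\z_\alpha(t) = -\mu\A\z_\alpha(t)\d t - \alpha\z_\alpha(t)\d t + \d\W(t).
\end{equation*}
Adding $\e$ times the second to the first and noting that the two $\e\alpha\z_\alpha(t)\,\d t$ contributions cancel, I obtain
\begin{equation*}
\d\u_\e^\alpha(t) = \bigl[-\mu\A\u_\e^\alpha(t) - \B(\u_\e^\alpha(t)) - \beta\mathcal{C}(\u_\e^\alpha(t)) + \f\bigr]\d t + \e\d\W(t),
\end{equation*}
which is exactly \eqref{S-CBF}. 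The initial condition matches as well, since $\u_\e^\alpha(0) = \v_\e^\alpha(0) + \e\z_\alpha(0) = (\x - \e\z_\alpha(0)) + \e\z_\alpha(0) = \x$.

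The only substantive issue, and therefore the place that deserves a careful word, is regularity: one must check that the computation above is admissible at the level at which the equations have been formulated (the PDE for $\v_\e^\alpha$ is pathwise in the weak sense of \eqref{W-CSCBF}, while \eqref{OUPe} is understood in the It\^o sense on the M-type 2 Banach space $\X$). By Proposition \ref{SOUP} and Remark \ref{stationary}, $\z_\alpha$ lies in $\mathrm{L}^{r+1}(0,T;\D(\A)) \cap \mathrm{L}^\infty(0,T;\V)$ almost surely, so $\e\z_\alpha$ and $\v_\e^\alpha$ have enough regularity for the sum $\u_\e^\alpha$ to satisfy the integrated version of \eqref{S-CBF} in the sense of \eqref{W-SCBF} pathwise $\mathbb{P}$-a.s.

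Applying the pathwise uniqueness of strong solutions to \eqref{S-CBF}, we conclude that for any two parameters $\alpha_1, \alpha_2 \ge 0$,
\begin{equation*}
\varphi_\e^{\alpha_1}(t,\omega,\x) = \u_\e^{\alpha_1}(t) = \u_\e^{\alpha_2}(t) = \varphi_\e^{\alpha_2}(t,\omega,\x),
\end{equation*}
$\mathbb{P}$-a.s., for every $t \ge 0$ and $\x \in \H$. I do not anticipate a genuine obstacle here; the main thing to be careful about is to justify the cancellation in function spaces where the weak formulations of both \eqref{cscbf} and \eqref{OUPe} make sense, and to verify that the resulting identity qualifies $\u_\e^\alpha$ as the unique strong solution of \eqref{S-CBF} (so that the uniqueness theorem from \cite{MTM1} is truly applicable).
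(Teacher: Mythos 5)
Your core computation is right and is the same cancellation that underlies the paper's argument, but the two proofs are packaged differently, and the difference matters slightly. The paper does not pass through the stochastic uniqueness theorem of \cite{MTM1}: it fixes $\omega$, writes the integrated equations for the two candidates $\u_i=\v_\varepsilon^{\alpha_i}+\varepsilon\z_{\alpha_i}$, observes that the Wiener increments cancel in the difference $\u_1-\u_2$ (leaving a purely deterministic identity $\u_1(t)-\u_2(t)=-\int_0^t[\G(\u_1)-\G(\u_2)]\,\d s$), and then applies the monotonicity inequalities \eqref{fe4} (for $r>3$) and \eqref{fe5} (for $r=3$, $2\beta\mu\geq1$) together with Gronwall to conclude $\u_1\equiv\u_2$ for \emph{every} $\omega$ in the restricted sample space and every $\x\in\H$. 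Your route — show each $\u_\varepsilon^\alpha$ solves \eqref{S-CBF} and invoke pathwise uniqueness — proves the same thing in spirit (that uniqueness theorem is itself proved via \eqref{fe4}/\eqref{fe5}), but it delivers a weaker conclusion: pathwise uniqueness of adapted strong solutions gives indistinguishability only $\mathbb{P}$-a.s., with an exceptional null set that a priori depends on the initial datum $\x$; since $\H$ is uncountable, this does not immediately yield the identity of the maps $\varphi_\varepsilon^{\alpha_1}=\varphi_\varepsilon^{\alpha_2}$ on all of $\mathbb{R}^+\times\Omega\times\H$, which is what an RDS statement requires (and what the cocycle property and attractor construction later rely on). The same a.s.-only caveat infects your intermediate step, since the Itô equation \eqref{OUPe} for $\z_\alpha$ is itself an almost-sure identity. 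The fix is exactly the paper's omega-wise argument: once you subtract the two random PDEs \eqref{cscbf} for a fixed $\omega$, no stochastic integral survives, and the deterministic monotonicity--Gronwall estimate closes the proof without any null sets. So your proposal is essentially correct but should be recast as a per-$\omega$ comparison using \eqref{fe4}/\eqref{fe5} rather than as an appeal to probabilistic uniqueness.
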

	\begin{proof}
		Using inequality \eqref{fe4} (for $r>3$) and inequality \eqref{fe5} (for $r=3$), one can complete  the proof  similarly as in Proposition 4.11, \cite{KM}. 
	\end{proof}
	Using Proposition \ref{alpha_ind}, we denote $\varphi_{\varepsilon}^{\alpha}$  by $\varphi_{\varepsilon}$.
	\begin{theorem}
		$(\varphi_{\varepsilon}, \theta)$ is an RDS.
	\end{theorem}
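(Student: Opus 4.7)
The plan is to verify the four axioms of a random dynamical system over the metric dynamical system $(\Omega, \hat{\mathscr{F}}, \hat{\mathbb{P}}, \hat{\theta})$ constructed in Proposition \ref{m-DS}, namely: (i) $\varphi_{\varepsilon}(0, \omega, \cdot) = \mathrm{id}_{\H}$; (ii) measurability of $(t, \omega, \x) \mapsto \varphi_{\varepsilon}(t, \omega, \x)$; (iii) continuity of $\x \mapsto \varphi_{\varepsilon}(t, \omega, \x)$; and (iv) the cocycle property $\varphi_{\varepsilon}(t+s, \omega, \x) = \varphi_{\varepsilon}(t, \theta_s \omega, \varphi_{\varepsilon}(s, \omega, \x))$ for all $s, t \geq 0$, $\omega \in \Omega$, $\x \in \H$.

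Property (i) is immediate: from \eqref{cscbf} we have $\v_{\varepsilon}^{\alpha}(0) = \x - \varepsilon \z_{\alpha}(\omega)(0)$, and thus $\varphi_{\varepsilon}(0, \omega, \x) = \v_{\varepsilon}^{\alpha}(0) + \varepsilon \z_{\alpha}(\omega)(0) = \x$. Property (iii) is the content of Theorem \ref{RDS_Conti}: fixing $\omega \in \Omega$ and using that $\z_{\alpha}(\omega) \in \mathrm{L}^{\infty}(0, T; \V) \cap \mathrm{L}^{r+1}(0, T; \D(\A))$ by Proposition \ref{SOUP} and Remark \ref{stationary}, continuous dependence on the initial condition in $\H$ gives continuity of $\x \mapsto \v_{\varepsilon}^{\alpha}(t) = \varphi_{\varepsilon}(t, \omega, \x) - \varepsilon \z_{\alpha}(\omega)(t)$, hence of $\x \mapsto \varphi_{\varepsilon}(t, \omega, \x)$. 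For measurability (ii), one standardly approximates $\z_{\alpha}(\omega)$ by smooth paths and applies the same continuous-dependence theorem to pass measurability from the smooth-path case (where it is classical) to the limit; joint measurability in $(t, \omega, \x)$ then follows from continuity in $(t, \x)$ and measurability in $\omega$.

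The core step is the cocycle property (iv). The key identity is $\z_{\alpha}(\theta_s \omega)(t) = \z_{\alpha}(\omega)(t+s)$ for all $s \in \R$, $t \in \R$, which is a direct consequence of the integral representation \eqref{oup} combined with the definition $\theta_s \omega(\cdot) = \omega(\cdot + s) - \omega(s)$ and the fact that the Wiener increments of $\theta_s \omega$ over $[u, v]$ coincide with those of $\omega$ over $[u+s, v+s]$. Now fix $\omega \in \Omega$, $s \geq 0$, $\x \in \H$, set $\y = \varphi_{\varepsilon}(s, \omega, \x)$, and consider, on the interval $[0, t]$, both
\begin{equation*}
t \mapsto \varphi_{\varepsilon}(t+s, \omega, \x) \quad \text{and} \quad t \mapsto \varphi_{\varepsilon}(t, \theta_s \omega, \y).
\end{equation*}
Translating \eqref{cscbf} and using $\z_{\alpha}(\theta_s \omega)(t) = \z_{\alpha}(\omega)(t+s)$, one checks that both functions, after subtracting $\varepsilon \z_{\alpha}(\theta_s \omega)(t)$, solve the same deterministic evolution \eqref{cscbf} (driven by the shifted OU path) with the common initial value $\y - \varepsilon \z_{\alpha}(\theta_s \omega)(0) = \y - \varepsilon \z_{\alpha}(\omega)(s)$. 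Pathwise uniqueness for \eqref{cscbf}, which follows from the monotonicity estimates \eqref{fe4} and \eqref{fe5} proved in Theorems \ref{thm2.4} and \ref{thm2.3}, then forces the two functions to coincide on $[0, t]$, yielding the cocycle identity.

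The main obstacle is the cocycle argument above, since it hinges on two things that must be handled carefully: the precise shift identity for the stationary Ornstein-Uhlenbeck process (requiring one to work on the invariant set $\Omega$ where the OU trajectory is well defined for all shifts), and pathwise uniqueness for \eqref{cscbf} in the low-regularity class guaranteed by Theorem \ref{RDS_Conti}. The uniqueness piece is precisely where the global monotonicity results \eqref{fe4}--\eqref{fe5} are needed, and these are available here because we restrict to $r > 3$ or $r = 3$ with $2\beta\mu \geq 1$; apart from this, the remainder of the verification is standard, and the $\alpha$-independence of $\varphi_{\varepsilon}$ from Proposition \ref{alpha_ind} allows one to carry out the computations with any convenient choice of $\alpha$.
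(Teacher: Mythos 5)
Your proposal is correct and follows essentially the same route as the paper: the paper derives all properties except the cocycle identity from Theorem \ref{RDS_Conti} and then refers to Theorem 6.15 of \cite{BL} for the cocycle property, which is proved there exactly as you do — via the shift identity $\z_{\alpha}(\theta_s\omega)(t)=\z_{\alpha}(\omega)(t+s)$ for the stationary Ornstein--Uhlenbeck process together with pathwise uniqueness for the transformed equation \eqref{cscbf}. Your write-up simply fills in the details the paper omits, and your appeal to uniqueness (available either from the monotonicity results or directly from the estimates in Theorem \ref{RDS_Conti}) is consistent with the paper's remark that \eqref{cscbf} has a unique solution.
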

	\begin{proof}
		All the properties with the exception of the cocycle one of an RDS follow from Theorem \ref{RDS_Conti}. Hence we only need to show that for any $\x\in \H,$
		\begin{align}\label{Cocy..}
		\varphi_{\varepsilon}(t+s, \omega)\x = \varphi_{\varepsilon}(t, \theta_s \omega)\varphi_{\varepsilon}(s, \omega)\x, \ \ t, s \in \R^+. 
		\end{align}
		Remaining proof is similar to that of Theorem 6.15, \cite{BL} and hence we omit it here.
	\end{proof}
	\subsection{Existence of random attractors}
	In this subsection, we consider the RDS $\varphi_{\varepsilon}$ over the metric DS $(\Omega, \hat{\mathscr{F}}, \hat{\mathbb{P}}, \hat{\theta})$.
	
	\begin{lemma}\label{Bddns4}
		For each $\omega\in \Omega,$
		\begin{align*}
		\limsup_{t\to - \infty} \|\z(\omega)(t)\|^2_{\H}\  e^{\mu \lambda_1 t +\frac{216}{ \mu^3} \int_{t}^{0}\|\z(\zeta)\|^{4}_{\V}\d\zeta} = 0.
		\end{align*}
	\end{lemma}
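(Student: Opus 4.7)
The plan is to combine two ingredients that are already in place in the paper: the polynomial (in particular sub-exponential) control of the Ornstein--Uhlenbeck path provided by its H\"older regularity, and the almost-sure large-time bound on $\int_t^0 \|\z(\zeta)\|_\V^4\d\zeta$ provided by Corollary \ref{Bddns1}. The exponent in the expression behaves like $-\mu\lambda_1|t|/2$ as $t\to-\infty$, which easily beats any polynomial prefactor.

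First, I would extract a polynomial growth bound on $\|\z(\omega)(t)\|_\H^2$. By construction, $\omega\in\Omega\subset\C^\xi_{1/2}(\R,\E)$, so applying the H\"older seminorm definition with $s=0$ and using $\omega(0)=0$ gives
\[
\|\z(\omega)(t)\|_\E \;\leq\; C(\omega)\,|t|^\xi(1+|t|)^{1/2},\qquad t\in\R.
\]
Since $\E\hookrightarrow\H$ continuously (via $\x\mapsto \A^{-\delta}\x$ and the embedding $\V\cap\dot\H^2_p\hookrightarrow\H$), and since $\xi<1/2$, this yields $\|\z(\omega)(t)\|_\H^2\leq C(\omega)(1+|t|)$ for all $t\in\R$.

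Second, I would invoke Corollary \ref{Bddns1} directly: for each $\omega\in\Omega$ there exists $t_0(\omega)\geq 0$ such that, after the change of variable $\tau=-t$, for all $\tau\leq -t_0(\omega)$ one has
\[
\mu\lambda_1\tau \;+\; \frac{216}{\mu^3}\int_\tau^0\|\z(\zeta)\|_\V^4\d\zeta \;\leq\; \mu\lambda_1\tau + \frac{\mu\lambda_1|\tau|}{2} \;=\; -\frac{\mu\lambda_1|\tau|}{2}.
\]
Combining the two estimates gives, for $\tau\leq -t_0(\omega)$,
\[
\|\z(\omega)(\tau)\|_\H^2\,\exp\!\left\{\mu\lambda_1\tau+\frac{216}{\mu^3}\int_\tau^0\|\z(\zeta)\|_\V^4\d\zeta\right\} \;\leq\; C(\omega)(1+|\tau|)\,e^{-\mu\lambda_1|\tau|/2},
\]
and letting $\tau\to-\infty$ sends the right-hand side to $0$, which is the desired conclusion.

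The only mildly delicate point is justifying the polynomial bound on $\|\z(\omega)(t)\|_\H$; everything else is a direct quotation of the ergodic estimate already established. If one prefers to avoid the H\"older-seminorm argument, one can instead appeal to the general temperedness property of stationary random variables taking values in a Banach space (which yields $\lim_{|t|\to\infty} e^{-\gamma|t|}\|\z(\omega)(t)\|_\H=0$ for every $\gamma>0$, $\P$-a.s.); this is standard in the random dynamical systems literature and suffices to absorb the prefactor against the exponential decay rate $\mu\lambda_1/2$ produced by Corollary \ref{Bddns1}.
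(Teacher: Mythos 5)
Your argument is correct and is, in substance, the standard proof of this lemma: the paper itself does not prove it but simply cites Lemma~4.1 of \cite{KM1}, and the proof there runs exactly along your lines --- a sub-exponential (in fact polynomial) pathwise bound on the Ornstein--Uhlenbeck process combined with the ergodic estimate of Corollary~\ref{Bddns1}, which makes the exponent behave like $-\mu\lambda_1|t|/2$ and swamp the prefactor. Your handling of Corollary~\ref{Bddns1} (the change of variable $\tau=-t$ and the resulting bound $\mu\lambda_1\tau+\tfrac{216}{\mu^3}\int_\tau^0\|\z\|_\V^4\,\d\zeta\le-\tfrac{\mu\lambda_1|\tau|}{2}$ for $\tau\le -t_0(\omega)$) is exactly right.

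The one imprecise step is the first one: the H\"older seminorm defining $\C^{\xi}_{1/2}(\R,\mathrm{E})$ controls the canonical Wiener path $\omega(t)$, not the Ornstein--Uhlenbeck process $\z(\omega)(t)$, so you cannot literally ``apply the seminorm with $s=0$'' to $\z$. The conclusion you want is nonetheless true and essentially free: the paper already asserts (just before the weak formulation of \eqref{cscbf}) that $\z_{\alpha}(\omega)\in\C_{1/2}(\R,\mathrm{X})$, which gives $\|\z(\omega)(t)\|_{\mathrm{X}}\le C(\omega)\left(1+|t|^{1/2}\right)$ and hence $\|\z(\omega)(t)\|_{\H}^2\le C(\omega)(1+|t|)$ directly, since $\mathrm{X}=\V\cap\dot{\H}^2_p\hookrightarrow\H$. (If one wants to derive this from the H\"older bound on $\omega$, one uses the integration-by-parts representation $\z_\alpha(t)=\omega(t)-\int_{-\infty}^t(\mu\A+\alpha)e^{-(t-s)(\mu\A+\alpha)}\omega(s)\,\d s$ and integrates the polynomial growth of $\omega$ against the exponentially decaying kernel; your fallback via temperedness of stationary processes is also a legitimate route.) With that substitution your proof is complete and self-contained, which is more than the paper offers.
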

	\begin{proof}
		For proof, see Lemma 4.1 \cite{KM1}.
	\end{proof}
	\begin{lemma}\label{Bddns5}
		For each $\omega\in \Omega,$
		\begin{align*}
		\int_{- \infty}^{0} \bigg\{ 1 +\|\z(t)\|^4_{\V}  + \|\A\z(t)\|^{r+1}_{\H}+   \|\z(t)\|^2_{\V}   \bigg\}e^{\mu \lambda_1 t +\frac{216}{\mu^3} \int_{t}^{0}\|\z(\zeta)\|^{4}_{\V}\d\zeta} \d t < \infty.
		\end{align*}
	\end{lemma}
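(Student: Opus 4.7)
The plan is to split the integral at the (random) threshold $t_0=t_0(\omega)$ furnished by Corollary \ref{Bddns1}, handle the compact piece $[-t_0,0]$ by regularity of $\z$, and control the tail $(-\infty,-t_0)$ by combining the exponent estimate from Corollary \ref{Bddns1} with a sub-exponential growth bound on the integrand coming from stationarity and ergodicity of the Ornstein--Uhlenbeck process.

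First, observe that Corollary \ref{Bddns1} gives, for $t\leq -t_0(\omega)$,
\begin{align*}
\mu\lambda_1 t + \frac{216}{\mu^3}\int_{t}^{0}\|\z(\zeta)\|^{4}_{\V}\d\zeta
\leq \mu\lambda_1 t + \frac{\mu\lambda_1 |t|}{2}
= \frac{\mu\lambda_1}{2}\,t,
\end{align*}
so the exponential weight decays at least like $e^{\mu\lambda_1 t/2}$ on the tail. On the compact window $[-t_0,0]$, Remark \ref{stationary} supplies $\z\in \mathrm{L}^{q}(-t_0,0;\mathrm{X})$ for every $q\in[1,\infty]$ (recall $\mathrm{X}=\V\cap\dot{\H}_p^{2}$ and $\|\A\z\|_{\H}\leq C\|\z\|_{\mathrm{X}}$), while the exponential weight is bounded on this window; hence the contribution of $[-t_0,0]$ to the integral is finite $\hat{\mathbb{P}}$-a.s.

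For the tail, it remains to show that, $\hat{\mathbb{P}}$-a.s.,
\begin{align*}
\int_{-\infty}^{-t_0}\Bigl\{1+\|\z(t)\|_{\V}^{4}+\|\A\z(t)\|_{\H}^{r+1}+\|\z(t)\|_{\V}^{2}\Bigr\}e^{\mu\lambda_1 t/2}\d t<\infty.
\end{align*}
The pure constant term is immediate. For the others, I would use that $\z_{\alpha}$ is a stationary, ergodic $\mathrm{X}$-valued process with finite moments of all orders (by the Burkholder inequality consequence recorded just before \eqref{alpha_con}), so that $\E\|\z(0)\|_{\mathrm{X}}^{p}<\infty$ for every $p\geq 2$; by the strong law of large numbers applied as in \eqref{SLLN} (or equivalently by Chebyshev plus Borel--Cantelli applied to the discretisation $\sup_{n\leq t\leq n+1}\|\z(t)\|_{\mathrm{X}}$, whose moments are controlled through the H\"older norm structure of $\Omega(\xi,\mathrm{E})$), one obtains that for $\hat{\mathbb{P}}$-a.e.\ $\omega$ and every $\eta>0$ there is a constant $C=C(\omega,\eta)$ with
\begin{align*}
\|\z(t,\omega)\|_{\mathrm{X}}\leq C(1+|t|)^{\eta},\qquad t\leq 0.
\end{align*}
Since $\|\z(t)\|_{\V}+\|\A\z(t)\|_{\H}\leq C'\|\z(t)\|_{\mathrm{X}}$, choosing $\eta$ so small that $\eta(r+1)<\mu\lambda_1/4$, each of the integrands $\|\z(t)\|_{\V}^{4}$, $\|\A\z(t)\|_{\H}^{r+1}$, $\|\z(t)\|_{\V}^{2}$ is dominated by $C''(1+|t|)^{\eta(r+1)}$, whence
\begin{align*}
\int_{-\infty}^{-t_0}(1+|t|)^{\eta(r+1)}e^{\mu\lambda_1 t/2}\d t<\infty,
\end{align*}
and the claim follows by combining with the compact piece above.

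The routine ingredients (stationarity, moment bounds, Corollary \ref{Bddns1}) are already in place. The one step that requires care is establishing the sub-exponential pathwise upper bound on $\|\z(t,\omega)\|_{\mathrm{X}}$ as $t\to-\infty$: the most efficient route is to invoke the H\"older--sublinear structure built into the sample space $\Omega(\xi,\mathrm{E})$ (via the weight $(1+|t|+|s|)^{1/2}$ in the definition of $\C_{1/2}^{\xi}$), which propagates to $\z_{\alpha}$ through the convolution representation \eqref{oup} and gives the required polynomial control uniformly on $\Omega$. With that in hand, the tail integral is dominated by a convergent Gamma-type integral and the proof concludes exactly as in Lemma~4.2 of \cite{KM1}.
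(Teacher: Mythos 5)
The paper does not actually prove this lemma; it defers entirely to Lemma 4.2 of \cite{KM1}. Your proposal reconstructs what is surely the intended argument, and the outline is correct: split at the threshold $t_0(\omega)$ of Corollary \ref{Bddns1}, note that on $[-t_0,0]$ the weight is bounded and the integrand is in $\mathrm{L}^1$ by Remark \ref{stationary} (together with $\|\A\z\|_{\H}\leq C\|\z\|_{\mathrm{X}}$), and on the tail use the exponent bound $\mu\lambda_1 t+\tfrac{216}{\mu^3}\int_t^0\|\z\|_{\V}^4\d\zeta\leq\tfrac{\mu\lambda_1}{2}t$ against a pathwise polynomial growth bound for $\|\z(t)\|_{\mathrm{X}}$. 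Two small refinements. First, your Borel--Cantelli route gives the bound $\|\z(t,\omega)\|_{\mathrm{X}}\leq C(1+|t|)^{\eta}$ only $\hat{\mathbb{P}}$-a.s., on a full-measure set that need not coincide with the $\Omega$ fixed in the paper, whereas the lemma is asserted \emph{for each} $\omega\in\Omega$; the cleaner route, which you also mention, is the deterministic bound $\|\omega(t)\|_{\mathrm{E}}\leq C(\omega)(1+|t|)^{1/2+\xi}$ built into the very definition of $\C^{\xi}_{1/2}(\R,\mathrm{E})$, which holds for \emph{every} element of the sample space and propagates to $\z_{\alpha}$ through \eqref{oup} after an integration by parts. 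Second, the constraint $\eta(r+1)<\mu\lambda_1/4$ is superfluous: $\int_{-\infty}^{0}(1+|t|)^{N}e^{\mu\lambda_1 t/2}\d t$ converges for every fixed power $N$, so any polynomial growth (in particular the fixed exponent $1/2+\xi$) suffices. With the deterministic variant of the growth bound substituted in, your argument is complete and matches the lemma as stated.
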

	\begin{proof}
		For proof, see Lemma 4.2 \cite{KM1}.
	\end{proof}
	\begin{definition}\label{RA2}
		A function $\kappa: \Omega\to (0, \infty)$ belongs to class $\mathfrak{K}$ if and only if 
		\begin{align}
		\limsup_{t\to \infty} [\kappa(\theta_{-t}\omega)]^2 e^{-\mu \lambda_1 t +\frac{216}{\mu^3} \int_{-t}^{0}\|\z(\omega)(s)\|^{4}_{\V}\d s} = 0,
		\end{align}
		where $\lambda_1$ is the first eigenvalue of Stokes operator $\A.$
		
		We denote by $\mathfrak{DK}$ the class of all closed and bounded random sets $\mathrm{D}$ on $\H$ such that the radius function $\Omega\ni \omega \mapsto \kappa(\mathrm{D}(\omega)):= \sup\{\|x\|_{\H}:x\in \mathrm{D}(\omega)\}$ belongs to the class $\mathfrak{K}.$
		
		By Corollary \ref{Bddns1}, we infer that the constant functions belong to $\mathfrak{K}$. The class $\mathfrak{K}$ is closed with respect to sum, multiplication by a constant and if $\kappa \in \mathfrak{K}, 0\leq \bar{\kappa} \leq \kappa,$ then $\bar{\kappa}\in \mathfrak{K}.$
	\end{definition}
	\begin{proposition}\label{radius}
		Define functions $\kappa_{i}:\Omega\to (0, \infty), i= 1, 2, 3, 4, 5, 6,$ by the following formulae, for $\omega\in\Omega,$
		\begin{align*}
		[\kappa_1(\omega)]^2 &:= \|\z(\omega)(0)\|_{\H},\\
		[\kappa_2(\omega)]^2& := \sup_{s\leq 0} \|\z(\omega)(s)\|^2_{\H}\  e^{\mu \lambda_1 s +\frac{216}{ \mu^3} \int_{s}^{0}\|\z(\omega)(\zeta)\|^{4}_{\V}\d\zeta}, \\
		[\kappa_3(\omega)]^2 &:= \int_{- \infty}^{0} \|\z(\omega)(t)\|^{2}_{\V}\ e^{\mu \lambda_1 t +\frac{216}{ \mu^3} \int_{t}^{0}\|\z(\omega)(\zeta)\|^{4}_{\V}\d\zeta}\d t, \\
		[\kappa_4(\omega)]^2 &:= \int_{- \infty}^{0} \|\A\z(\omega)(t)\|^{r+1}_{\H} e^{\mu \lambda_1 t +\frac{216}{ \mu^3} \int_{t}^{0}\|\z(\omega)(\zeta)\|^{4}_{\V} \d\zeta} \d t,\\
		[\kappa_5(\omega)]^2 &:= \int_{- \infty}^{0} \|\z(\omega)(t)\|^4_{\V}\ e^{\mu \lambda_1 t +\frac{216}{ \mu^3} \int_{t}^{0}\|\z(\omega)(\zeta)\|^{4}_{\V}\d\zeta} \d t,\\
		[\kappa_6(\omega)]^2& := \int_{- \infty}^{0} e^{\mu \lambda_1 t +\frac{216}{ \mu^3} \int_{t}^{0}\|\z(\omega)(\zeta)\|^{4}_{\V}\d\zeta} \d t.
		\end{align*}
		Then all these functions belongs to class $\mathfrak{K}.$
	\end{proposition}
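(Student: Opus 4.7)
The plan is to treat all six functions uniformly by exploiting the stationarity relation $\z(\theta_s \omega)(t) = \z(\omega)(t+s)$, performing a change of variable in time, and then invoking either Lemma~\ref{Bddns4} (for the pointwise quantities $\kappa_1,\kappa_2$) or Lemma~\ref{Bddns5} (for the integral quantities $\kappa_3,\kappa_4,\kappa_5,\kappa_6$) to conclude. Throughout, write $\Phi(\omega; a, b) := \mu\lambda_1(b-a) + \frac{216}{\mu^3}\int_a^b \|\z(\omega)(\zeta)\|_{\V}^4 \d\zeta$, so that the exponent appearing in the definitions of $\kappa_i$ and in Definition~\ref{RA2} are both of this form. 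Note the additive identity $\Phi(\omega; a,c) = \Phi(\omega;a,b) + \Phi(\omega;b,c)$, which is the algebraic fact that drives the whole argument.

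First I would establish the key shift identity. For any fixed $s\geq 0$, applying the change of variable $\tau=\zeta-s$ and the stationarity relation gives
\begin{equation*}
\Phi(\theta_{-s}\omega;\,a,\,b) = \Phi(\omega;\,a-s,\,b-s).
\end{equation*}
Consequently, for the multiplicative factor from Definition~\ref{RA2} one has
\begin{equation*}
e^{-\mu\lambda_1 s + \frac{216}{\mu^3}\int_{-s}^0\|\z(\omega)(\tau)\|_{\V}^4\d\tau} = e^{\Phi(\omega;-s,0)-2\mu\lambda_1 s} \cdot e^{\mu\lambda_1 s},
\end{equation*}
and I will combine this factor with the shifted $\kappa_i$ by moving the inner time variable to coordinates centred at $0$ on $\omega$ rather than on $\theta_{-s}\omega$.

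For $\kappa_2$, substituting $r = \sigma - s$ in the sup, I would compute
\begin{align*}
[\kappa_2(\theta_{-s}\omega)]^2 e^{-\mu\lambda_1 s + \frac{216}{\mu^3}\int_{-s}^0\|\z(\omega)\|_{\V}^4\d\zeta}
&= \sup_{r\le -s}\|\z(\omega)(r)\|_{\H}^2\, e^{\mu\lambda_1 r + \frac{216}{\mu^3}\int_r^0\|\z(\omega)(\zeta)\|_{\V}^4\d\zeta},
\end{align*}
where the additive identity for $\Phi$ glues the two integrals $\int_r^{-s}$ and $\int_{-s}^0$ into $\int_r^0$, and the $\mu\lambda_1 s$ terms cancel. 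As $s\to\infty$ this is a tail supremum of the quantity whose limit is $0$ by Lemma~\ref{Bddns4}, proving $\kappa_2\in\mathfrak{K}$. For $\kappa_1$, since $[\kappa_1(\theta_{-s}\omega)]^2 = \|\z(\omega)(-s)\|_{\H}$, the claim follows \emph{a fortiori} from Lemma~\ref{Bddns4} (taking a square root of a quantity bounded by $1$ for large $s$).

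For the integral cases $\kappa_i$, $i\in\{3,4,5,6\}$, I apply exactly the same change of variable $r=t-s$ inside the outer integral, and $\tau=\zeta-s$ inside the inner one. The representative calculation for $\kappa_3$ is
\begin{align*}
[\kappa_3(\theta_{-s}\omega)]^2 e^{-\mu\lambda_1 s + \frac{216}{\mu^3}\int_{-s}^0\|\z(\omega)\|_{\V}^4\d\zeta}
&= \int_{-\infty}^{-s}\|\z(\omega)(r)\|_{\V}^2\, e^{\mu\lambda_1 r + \frac{216}{\mu^3}\int_r^0\|\z(\omega)(\zeta)\|_{\V}^4\d\zeta}\d r,
\end{align*}
and the identical manipulation produces the tails $\int_{-\infty}^{-s}$ of the other integrands $\|\A\z\|_{\H}^{r+1}$, $\|\z\|_{\V}^4$, and $1$ respectively, weighted by the same exponential. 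Lemma~\ref{Bddns5} asserts that each of these integrals over $(-\infty,0)$ is finite, so letting $s\to\infty$ the tail vanishes, proving $\kappa_i\in\mathfrak{K}$ for $i=3,4,5,6$.

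The argument is purely algebraic once the two lemmas are in hand; the only step that requires any care is bookkeeping the cancellation between the cocycle exponent in Definition~\ref{RA2} and the exponents inside each $\kappa_i$ after the time shift. That bookkeeping is encapsulated in the additive identity for $\Phi$, and I do not anticipate any genuine obstacle.
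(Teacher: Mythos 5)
Your argument is correct and is exactly the standard one: the paper itself does not prove this proposition but simply cites Proposition 4.4 of \cite{KM1}, and the proof there is the same stationarity-plus-change-of-variables computation you give, reducing $\kappa_1,\kappa_2$ to Lemma \ref{Bddns4} and $\kappa_3,\dots,\kappa_6$ to the tails of the convergent integrals in Lemma \ref{Bddns5}. One small slip: your displayed identity $e^{-\mu\lambda_1 s+\frac{216}{\mu^3}\int_{-s}^0\|\z(\omega)(\tau)\|^4_{\V}\d\tau}=e^{\Phi(\omega;-s,0)-2\mu\lambda_1 s}\cdot e^{\mu\lambda_1 s}$ carries a spurious factor $e^{\mu\lambda_1 s}$ (the left side equals $e^{\Phi(\omega;-s,0)-2\mu\lambda_1 s}$ alone), but this display is not used in the subsequent $\kappa_2$ and $\kappa_3$ computations, which are carried out directly and correctly, so nothing breaks.
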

	\begin{proof}
		For proof, see Proposition 4.4 \cite{KM1}.
	\end{proof}

	\begin{theorem}\label{H_ab}
		Assume that $\f\in\H$ and Assumption \ref{A1} holds. Then there exists a family $\hat{B}_0=\{\mathrm{{\bf B}}_0(\omega):\omega\in \Omega\}$ of $\mathfrak{DK}$-random absorbing sets in $\H$ and a family $\hat{B}=\{\mathrm{{\bf B}}(\omega):\omega\in \Omega\}$ of $\mathfrak{DK}$-random absorbing sets in $\V$ corresponding to the RDS $\varphi_{\varepsilon}.$
	\end{theorem}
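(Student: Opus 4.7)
My plan is to mimic the two-step energy argument of Theorem \ref{d-ab} but run it on the random equation \eqref{cscbf} along the pullback trajectory, with the Ornstein--Uhlenbeck bounds from Corollary \ref{Bddns1} and Proposition \ref{radius} playing the role the deterministic forcing played before.

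\textbf{Step 1 ($\H$-absorbing set).} Take the inner product of \eqref{cscbf} with $\v_{\varepsilon}(t)$. Writing $\u=\v_{\varepsilon}+\varepsilon\z$ and using $b(\u,\u,\u)=0$, the trilinear term reduces to $\varepsilon\, b(\v_{\varepsilon}+\varepsilon\z,\z,\v_{\varepsilon})$, which I would split with $\v_{\varepsilon}=\u-\varepsilon\z$ and estimate through the Ladyzhenskaya inequality \eqref{lady} and Young's inequality so as to absorb one factor $\|\v_\varepsilon\|_{\V}^2$ into $\mu\|\v_\varepsilon\|_{\V}^2$ and create a term of the shape $\frac{108}{\mu^3}\|\z\|_{\V}^4\|\v_\varepsilon\|_{\H}^2$; this is exactly the factor $\|\z\|_{\V}^4$ that appears in the weights of Proposition \ref{radius} and in Corollary \ref{Bddns1}. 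For the absorption term, I use $\langle\mathcal{C}(\u),\u\rangle=\|\u\|_{\widetilde\L^{r+1}}^{r+1}$ together with $(\mathcal{C}(\u),\v_\varepsilon)=(\mathcal{C}(\u),\u)-\varepsilon(\mathcal{C}(\u),\z)$ and bound the last piece by $\frac{\beta}{2}\|\u\|_{\widetilde\L^{r+1}}^{r+1}+C\|\z\|_{\widetilde\L^{r+1}}^{r+1}$, with $\|\z\|_{\widetilde\L^{r+1}}^{r+1}\le C\|\A\z\|_{\H}^{r+1}$. Bounding the forcing, the $\varepsilon\alpha\z$ term and the Poincar\'e inequality give
\begin{align*}
\frac{\d}{\d t}\|\v_{\varepsilon}(t)\|_{\H}^{2}+\mu\lambda_1\|\v_{\varepsilon}(t)\|_{\H}^{2}\le \tfrac{216}{\mu^{3}}\|\z(t)\|_{\V}^{4}\|\v_{\varepsilon}(t)\|_{\H}^{2}+\Phi(t,\omega),
\end{align*}
with $\Phi(t,\omega)\le C(1+\|\z(t)\|_{\V}^{2}+\|\A\z(t)\|_{\H}^{r+1}+\|\f\|_{\H}^{2})$. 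Applying Gronwall on $[-t,0]$ with $\omega$ replaced by $\theta_{-t}\omega$ (so that $\z(\theta_{-t}\omega)(s)=\z(\omega)(s-t)$) and changing variables, I obtain
\begin{align*}
\|\v_{\varepsilon}(0,\theta_{-t}\omega)(\x-\varepsilon\z(\theta_{-t}\omega)(0))\|_{\H}^{2} \le \|\x-\varepsilon\z(\theta_{-t}\omega)(0)\|_{\H}^{2}e^{-\mu\lambda_1 t+\frac{216}{\mu^{3}}\int_{-t}^{0}\|\z\|_{\V}^{4}}+\int_{-\infty}^{0}\Phi\, e^{\mu\lambda_1 s+\frac{216}{\mu^{3}}\int_{s}^{0}\|\z\|_{\V}^{4}}\d s.
\end{align*}
The $\mathfrak{DK}$ assumption on $\mathrm{D}$ kills the first term by Definition \ref{RA2} and Lemma \ref{Bddns4}, and the integral is finite by Lemma \ref{Bddns5}, yielding the random radius $r_0^2(\omega)$ and hence $\mathbf{B}_0(\omega)=\{\y\in\H:\|\y\|_{\H}\le r_0(\omega)+\varepsilon\|\z(\omega)(0)\|_{\H}\}$. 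Membership of $r_0$ in $\mathfrak{K}$ follows from Proposition \ref{radius}.

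\textbf{Step 2 ($\V$-absorbing set).} Now I take the inner product of \eqref{cscbf} with $\A\v_{\varepsilon}$. For the nonlinear term I use \eqref{3} applied to $\u=\v_{\varepsilon}+\varepsilon\z$, namely
\begin{align*}
(\mathcal{C}(\u),\A\u)=\||\nabla\u||\u|^{\frac{r-1}{2}}\|_{\H}^{2}+\tfrac{4(r-1)}{(r+1)^{2}}\|\nabla|\u|^{\frac{r+1}{2}}\|_{\H}^{2},
\end{align*}
and then split $(\mathcal{C}(\u),\A\v_{\varepsilon})=(\mathcal{C}(\u),\A\u)-\varepsilon(\mathcal{C}(\u),\A\z)$, estimating the last piece by H\"older's inequality ($\|\mathcal{C}(\u)\|_{\widetilde\L^{(r+1)/r}}\|\A\z\|_{\widetilde\L^{r+1}}$) and Young's inequality. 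The convective term $(\B(\u),\A\v_\varepsilon)$ is handled exactly as in \eqref{d-ab12}--\eqref{d-ab14} (case $r>3$) or \eqref{d-ab18} (case $r=3$, $2\beta\mu\ge 1$), producing at worst a factor $\eta_3\|\u\|_{\V}^{2}$. This yields
\begin{align*}
\frac{\d}{\d t}\|\v_{\varepsilon}(t)\|_{\V}^{2}+\mu\|\A\v_\varepsilon(t)\|_{\H}^2\le 2\eta_3\|\v_\varepsilon(t)\|_{\V}^{2}+\Psi(t,\omega),
\end{align*}
with $\Psi$ integrable in time against the weights of Proposition \ref{radius}. Applying the double-integration trick from Theorem \ref{d-ab} on a unit time window $[s,s+1]\subset[-1,0]$ (again with $\omega\mapsto\theta_{-t}\omega$), I pass from an $\mathrm{L}^{2}(-1,0;\V)$-control (obtained by integrating the $\H$-energy estimate of Step 1) to a pointwise $\V$-bound at time $0$, giving a random radius $r_1(\omega)$ in $\mathfrak{K}$.

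\textbf{Main obstacles.} The hard part is the interaction of the absorption operator $\mathcal{C}(\v_{\varepsilon}+\varepsilon\z)$ with $\A\v_\varepsilon$: the splitting above needs $\A\z\in\widetilde\L^{r+1}$, which is only true because Assumption \ref{assump} and the remark after it guarantee $\z\in\D(\A)$, so one must really use the Hilbert-Schmidt/radonifying hypothesis on $\K$ (this is where the two-dimensional argument of \cite{KM1} is tight and where the restriction to periodic domains via \eqref{3} is essential). A secondary subtlety is to ensure the constant $\frac{216}{\mu^3}$ in the Gronwall exponent matches the one in Corollary \ref{Bddns1} and Proposition \ref{radius}; the bounds on the cross terms $\varepsilon b(\cdot,\z,\cdot)$ must therefore be done with Young's inequality tuned precisely, not just generously.
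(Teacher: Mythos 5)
Your proposal follows the paper's proof essentially step for step: the same splitting of the trilinear term into $\varepsilon b(\v_{\varepsilon},\z,\v_{\varepsilon})+\varepsilon^{2}b(\z,\z,\v_{\varepsilon})$ with the Ladyzhenskaya constant $\tfrac{108}{\mu^{3}}$ doubling to $\tfrac{216}{\mu^{3}}$, the same decomposition $(\mathcal{C}(\u),\v_{\varepsilon})=(\mathcal{C}(\u),\u)-\varepsilon(\mathcal{C}(\u),\z)$, the same Gronwall weight $e^{\mu\lambda_1 s+\frac{216}{\mu^{3}}\int_{s}^{0}\|\z\|_{\V}^{4}}$ combined with Lemmas \ref{Bddns4}--\ref{Bddns5} and Proposition \ref{radius}, and the same second step testing with $\A\v_{\varepsilon}$, using \eqref{3} and the $\mathrm{L}^{2}(-1,0;\V)\cap\mathrm{L}^{r+1}(-1,0;\widetilde{\L}^{r+1})$ control from Step 1 together with a uniform-Gronwall/double-integration argument.

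There is, however, one concrete flaw in your Step 2. You estimate $\varepsilon(\mathcal{C}(\u_{\varepsilon}),\A\z)$ by H\"older as $\|\mathcal{C}(\u_{\varepsilon})\|_{\widetilde{\L}^{(r+1)/r}}\|\A\z\|_{\widetilde{\L}^{r+1}}$ and justify it by ``$\z\in\D(\A)$''. But $\z\in\D(\A)$ only gives $\A\z\in\H=\L^{2}(\mathbb{T}^{3})$, and on a bounded domain $\L^{2}$ does not embed into $\L^{r+1}$ for $r\geq 3$; neither Assumption \ref{assump}, nor the definition of $\mathrm{X}$, nor Lemma \ref{Bddns5}/Proposition \ref{radius} (which are built around $\|\A\z\|_{\H}^{r+1}$, a different quantity) gives any control of $\|\A\z\|_{\widetilde{\L}^{r+1}}$. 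The paper instead pairs in $\L^{2}$: $|(\mathcal{C}(\u_{\varepsilon}),\varepsilon\A\z)|\leq\|\u_{\varepsilon}\|_{\L^{2r}}^{r}\|\varepsilon\A\z\|_{\H}\leq C\|\nabla|\u_{\varepsilon}|^{\frac{r+1}{2}}\|_{\H}^{\frac{2r}{r+1}}\|\varepsilon\A\z\|_{\H}$, and Young's inequality then absorbs the first factor into the good term $4\beta\big[\tfrac{r-1}{(r+1)^{2}}\big]\|\nabla|\u_{\varepsilon}|^{\frac{r+1}{2}}\|_{\H}^{2}$ produced by \eqref{3}, leaving only $C\|\A\z\|_{\H}^{r+1}$ --- exactly the quantity $\kappa_{4}$ is designed to handle. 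With this replacement (and the analogous $\L^{6}/\L^{12}$ computation for $r=3$) your argument closes. A secondary point you leave implicit: even if your H\"older pairing were available, the Young step creates a term $\|\u_{\varepsilon}\|_{\widetilde{\L}^{r+1}}^{r+1}$ with no matching negative term on the left of the $\V$-inequality, so it would have to be routed into the uniform-Gronwall data through \eqref{H_ab4}; this should be stated.
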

	\begin{proof}
		We divide the proof into the following steps.
		\vskip 0.2 cm
		\noindent\textbf{Step I:} \emph{Absorbing set in $\H$.} 
		Let $\mathrm{D}$ be a random set from the class $\mathfrak{DK}$. Let $\kappa_{\mathrm{D}}(\omega)$ be the radius of $\mathrm{D}(\omega)$, that is, $\kappa_{\mathrm{D}}(\omega):= \sup\{\|x\|_{\H} : x \in \mathrm{D}(\omega)\}, \omega\in \Omega.$
		
		Let $\omega\in \Omega$ be fixed. For given $s\leq 0$ and $\x\in \H$, let $\v(\cdot)$ be the solution of \eqref{cscbf} on time interval $[s, \infty)$ with the initial condition $\v_{\varepsilon}(s)= \x-\varepsilon\z(s).$
		Multiplying the first equation of \eqref{cscbf} by $\v_{\varepsilon}(\cdot)$ and then integrating the resulting equation over $\mathbb{T}^3$, we obtain	
		\begin{align}\label{H_ab1}
		\frac{1}{2}\frac{\d}{\d t}\|\v_{\varepsilon}(t)\|^2_{\H} = & -\mu\|\v_{\varepsilon}(t)\|^2_{\V} -b(\v_{\varepsilon}(t)+\varepsilon\z(t),\v_{\varepsilon}(t)+\varepsilon\z(t),\v_{\varepsilon}(t))\nonumber\\&-\beta\big\langle\mathcal{C}(\v_{\varepsilon}(t)+\varepsilon\z(t)),\v_{\varepsilon}(t)\big\rangle +\varepsilon\alpha(\z(t), \v_{\varepsilon}(t)) +(\f,\v_{\varepsilon}(t))\nonumber\\
		=&-\mu\|\v_{\varepsilon}(t)\|^2_{\V} -\varepsilon b(\v_{\varepsilon}(t),\z(t),\v_{\varepsilon}(t))-\varepsilon^2b(\z(t),\z(t),\v_{\varepsilon}(t))\nonumber\\&+\beta\big\langle\mathcal{C}(\v_{\varepsilon}(t)+\varepsilon\z(t)),\varepsilon\z(t)\big\rangle-\beta\big\langle\mathcal{C}(\v_{\varepsilon}(t)+\varepsilon\z(t)),\v_{\varepsilon}(t)+\varepsilon\z(t)\big\rangle \nonumber\\&+\varepsilon\alpha(\z(t), \v_{\varepsilon}(t)) +(\f,\v_{\varepsilon}(t)),
		\end{align}
		for a.e. $t\in[0,T].$ By using $0<\varepsilon\leq1$, H\"older's and Young's inequalities, \eqref{poin}, \eqref{lady} and Sobolev's inequality, we have
		\begin{align*}
		|\varepsilon b(\v_{\varepsilon},\z, \v_{\varepsilon})|&\leq \|\v_{\varepsilon}\|^2_{\L^{4}}\|\z\|_{\V}\leq2\|\v_{\varepsilon}\|^{\frac{1}{2}}_{\H}\|\v_{\varepsilon}\|^{\frac{3}{2}}_{\V}\|\z\|_{\V}\leq  \frac{\mu}{8} \|\v_{\varepsilon}\|^2_{\V} +\frac{108}{\mu^{3}}\|\z\|^4_{\V}\|\v_{\varepsilon}\|^2_{\H}, \\
		|\varepsilon^2b(\z,\z, \v_{\varepsilon})|&\leq \|\z\|^2_{\L^{4}}\|\v_{\varepsilon}\|_{\V}\leq2\|\z\|^{\frac{1}{2}}_{\H}\|\z\|^{\frac{3}{2}}_{\V}\|\v_{\varepsilon}\|_{\V}\leq  \frac{8}{\mu\lambda_1^{1/2}}\|\z\|^4_{\V} + \frac{\mu}{8} \|\v_{\varepsilon}\|^2_{\V},\\
		\beta\big\langle\mathcal{C}(\v_{\varepsilon}+\varepsilon\z),\v_{\varepsilon}+\varepsilon\z\big\rangle& =   \beta\|\v_{\varepsilon}+\varepsilon\z\|^{r+1}_{\L^{r+1}},\\
		|\beta\big\langle\mathcal{C}(\v_{\varepsilon}+\varepsilon\z),\varepsilon\z\big\rangle|&\leq \beta \|\v_{\varepsilon}+\varepsilon\z\|^{r}_{\L^{r+1}} \|\z\|_{\L^{r+1}}\leq \frac{\beta}{2} \|\v_{\varepsilon}+\varepsilon\z\|^{r+1}_{\L^{r+1}} + \frac{\beta(2r)^r}{(r+1)^{r+1}}\|\z\|^{r+1}_{\L^{r+1}}\\
		&\leq \frac{\beta}{2} \|\v_{\varepsilon}+\varepsilon\z\|^{r+1}_{\L^{r+1}} + C\|\A\z\|^{r+1}_{\H},\\
		|\varepsilon\alpha( \z, \v_{\varepsilon}) | &\leq  \alpha\|\z\|_{\H} \|\v_{\varepsilon}\|_{\H}\leq  \frac{\alpha}{\lambda_1}\|\z\|_{\V} \|\v_{\varepsilon}\|_{\V}\leq  \frac{\mu}{8} \|\v_{\varepsilon}\|^2_{\V} + \frac{4\alpha^2}{\mu\lambda_1^2} \|\z\|^2_{\V},\\
		|(\f, \v_{\varepsilon}) |& \leq  \|\f\|_{\H} \|\v_{\varepsilon}\|_{\H}\leq  \frac{1}{\lambda_1^{1/2}}\|\f\|_{\H} \|\v_{\varepsilon}\|_{\V}\leq   \frac{\mu}{8} \|\v_{\varepsilon}\|^2_{\V} + \frac{4}{\mu\lambda_1} \|\f\|^2_{\H}.
		\end{align*}
		Combining the above estimates and substituting it in \eqref{H_ab1}, we get
		\begin{align}\label{H_ab2}
		&\frac{\d}{\d t} \|\v_{\varepsilon}(t)\|^2_{\H}  + \mu\lambda_1 \|\v_{\varepsilon}(t)\|^2_{\H}  \nonumber\\&\leq  \frac{216}{\mu^3}\|\z(t)\|^4_{\V}\|\v_{\varepsilon}(t)\|^2_{\H} +\frac{16}{\mu\lambda_1^{1/2}}\|\z(t)\|^4_{\V}+ C\|\A\z(t)\|^{r+1}_{\H}+  \frac{8\alpha^2}{\mu\lambda_1^2} \|\z(t)\|^2_{\V} + \frac{8}{\mu\lambda_1} \|\f\|^2_{\H},
		\end{align}
		for a.e. $t\in[0,T].$ We infer from the classical Gronwall inequality that 
		\begin{align}\label{H_ab3}
		\|\v_{\varepsilon}(0)\|^2_{\H} 
		&\leq 2 \|\x\|^2_{\H} e^{\mu\lambda_1 s + \frac{216}{\mu^3}\int_{s}^{0}\|\z(\zeta)\|^4_{\V}\d\zeta} +2\|\z(s)\|^2_{\H} e^{\mu\lambda_1 s + \frac{216}{\mu^3}\int_{s}^{0}\|\z(\zeta)\|^4_{\V}\d\zeta} +\int_{s}^{0}\bigg\{\frac{16}{\mu\lambda_1^{1/2}}\|\z(t)\|^4_{\V} \nonumber \\& \quad + C\|\A\z(t)\|^{r+1}_{\H}+  \frac{8\alpha^2}{\mu\lambda_1^2} \|\z(t)\|^2_{\V}  + \frac{8}{\mu\lambda_1} \|\f\|^2_{\H}\biggr\}e^{\mu\lambda_1t  + \frac{216}{\mu^3}\int_{t}^{0}\|\z(\zeta)\|^4_{\V}\d\zeta}\d t,
		\end{align}
		for all $t\in[0,T]$, since $\|\v_{\e}(s)\|_{\H}\leq\|\x\|_{\H}+\|\z(s)\|_{\H}$. For $\omega\in \Omega,$ let us set
		\begin{align}
		[\kappa_{11}(\omega)]^2 =  &\ \ 2 +  2\sup_{s\leq 0}\bigg\{ \|\z(s)\|^2_{\H}\  e^{\mu \lambda_1 s +\frac{216}{ \mu^3} \int_{s}^{0}\|\z(\zeta)\|^{4}_{\V}\d\zeta}\bigg\} + \int_{- \infty}^{0} \bigg\{\frac{16}{\mu\lambda_1^{1/2}}\|\z(t)\|^4_{\V}  \nonumber\\& \quad + C\|\A\z(t)\|^{r+1}_{\H}+  \frac{8\alpha^2}{\mu\lambda_1^2} \|\z(t)\|^2_{\V}  + \frac{8}{\mu\lambda_1} \|\f\|^2_{\H}\bigg\}e^{\mu \lambda_1 t +\frac{216}{ \mu^3} \int_{t}^{0}\|\z(\zeta)\|^{4}_{\V}\d\zeta} \d t,\\	
		\kappa_{12}(\omega)=& \ \  \|\z(\omega)(0)\|_{\H}.
		\end{align}
		By Lemma \ref{Bddns5} and Proposition \ref{radius}, we infer that both $\kappa_{11}$ and $\kappa_{12}$ belongs to class $\mathfrak{K}$ and also that $\kappa_{13}:=\kappa_{11}+\kappa_{12}$ belongs to class $\mathfrak{K}$ as well. Therefore the random set $\textbf{B}_0(\cdot)$  defined by $$\textbf{B}_0(\omega) := \{\u\in\H: \|\u\|_{\H}\leq \kappa_{13}(\omega)\}$$ belongs to the family $\mathfrak{DK}.$
		
		Next, we  show that $\textbf{B}_0$ absorbs $\mathrm{D}$. Let $\omega\in\Omega$ be fixed. Since $\kappa_{\mathrm{D}}(\omega)\in \mathfrak{K}$, there exists $t_{\mathrm{D}}(\omega)\geq 0,$ such that 
		\begin{align*}
		[\kappa_{\mathrm{D}}(\theta_{-t}\omega)]^2 e^{-\mu \lambda_1 t +\frac{216}{ \mu^3} \int_{-t}^{0}\|\z(\omega)(s)\|^{4}_{\V}\d s} \leq 1, \  \text{for}\  t\geq t_{\mathrm{D}}(\omega). 
		\end{align*}
		Thus, if $\x\in \mathrm{D}(\theta_{-t}\omega)$ and $s\leq- t_{\mathrm{D}}(\omega),$ then by \eqref{H_ab3}, we have $$\|\v_{\varepsilon}(0,\omega; s, \x-\varepsilon\z(s))\|_{\H}\leq \kappa_{11}(\omega).$$ Hence, we deduce that $$\|\u_{\varepsilon}(0,\omega; s, \x)\|_{\H} \leq \|\v_{\varepsilon}(0,\omega; s, \x-\z(s))\|_{\H} + \|\z(\omega)(0)\|_{\H}\leq \kappa_{13}(\omega).$$ This implies that $\u_{\varepsilon}(0,\omega; s, \x) \in \textbf{B}_0(\omega)$, for all $s\leq -t_{\mathrm{D}}(\omega),$ and hence it follows that $\textbf{B}_0$ absorbs $\mathrm{D}$.

		Moreover, integrating \eqref{H_ab2} over $(-1,0)$, we find for any $\omega\in \Omega$, there exists an $\kappa_{14}(\omega)\geq 0$ such that
		\begin{align}\label{H_ab4}
		\int_{-1}^{0} \bigg[\|\v_{\varepsilon}(t)\|^2_{\V} + \|\v_{\varepsilon}(t)+ \varepsilon\z(t)\|^{r+1}_{\L^{r+1}} \bigg]\d t \leq \kappa_{14}(\omega),
		\end{align}
		for all $s\leq -t_{\mathrm{D}}(\omega).$
		
		\vskip 0.2 cm
		\noindent\textbf{Step II:} \emph{Absorbing set in $\V$.} 
		Multiplying the first equation in \eqref{cscbf} by $\A\v_{\varepsilon}(\cdot)$ and then integrating the resulting equation over $\mathbb{T}^3$, we obtain
		\begin{align}\label{H_ab5}
		\frac{1}{2}\frac{\d}{\d t}\|\v_{\varepsilon}(t)\|^2_{\V} = & -\mu\|\A\v_{\varepsilon}(t)\|^2_{\V} -b(\v_{\varepsilon}(t)+\varepsilon\z(t),\v_{\varepsilon}(t)+\varepsilon\z(t),\A\v_{\varepsilon}(t))\nonumber\\&-\beta(\mathcal{C}(\v_{\varepsilon}(t)+\varepsilon\z(t)),\A\v_{\varepsilon}(t)) +\alpha(\varepsilon\z(t), \A\v_{\varepsilon}(t)) +(\f,\A\v_{\varepsilon}(t))\nonumber\\
		=&-\mu\|\A\v_{\varepsilon}(t)\|^2_{\V} -b(\u_{\varepsilon}(t),\u_{\varepsilon}(t),\A\v_{\varepsilon}(t))-\beta(\mathcal{C}(\u_{\varepsilon}(t)),\A\u_{\varepsilon}(t))\nonumber\\&+\beta(\mathcal{C}(\u_{\varepsilon}(t)),\varepsilon\A\z(t)) +\alpha(\varepsilon\z(t), \A\v_{\varepsilon}(t)) +(\f,\A\v_{\varepsilon}(t)),
		\end{align}
		where $\u_{\varepsilon}(t)=\v_{\varepsilon}(t)+\varepsilon\z(t),$ for a.e. $t\in[0,T].$ We consider the cases $r>3$ and $r=3$ separately. 
		\vskip 0.2 cm 
		\noindent
		\textbf{Case I:} $r>3$.
		From \eqref{3}, we have
		\begin{align}
		(\mathcal{C}(\u_{\varepsilon}),\A\u_{\varepsilon})&=\||\nabla\u_{\varepsilon}||\u_{\varepsilon}|^{\frac{r-1}{2}}\|^2_{\H}+4\left[\frac{r-1}{(r+1)^2}\right]\|\nabla|\u_{\varepsilon}|^{\frac{r+1}{2}}\|^2_{\H},\label{V_ab1}\\
		|(\mathcal{C}(\u_{\varepsilon}),\varepsilon\A\z)|&\leq\|\u_{\varepsilon}\|^r_{\L^{2r}}\|\varepsilon\A\z\|_{\H}\leq C\|\u_\varepsilon\|^r_{\L^{3(r+1)}}\|\varepsilon\A\z\|_{\H}\nonumber\\&=C\||\u_{\varepsilon}|^{\frac{r+1}{2}}\|^{\frac{2r}{r+1}}_{\L^6}\|\varepsilon\A\z\|_{\H}\leq C\|\nabla|\u_{\varepsilon}|^{\frac{r+1}{2}}\|^{\frac{2r}{r+1}}_{\H}\|\varepsilon\A\z\|_{\H}\nonumber\\&\leq 2\left[\frac{r-1}{(r+1)^2}\right]\|\nabla|\u_{\varepsilon}|^{\frac{r+1}{2}}\|^2_{\H} + \varepsilon^{r+1}C\|\A\z\|^{r+1}_{\H},\label{V_ab2}\\
		\alpha|(\varepsilon\z,\A\v_{\varepsilon})|&\leq \alpha \|\varepsilon\z\|_{\H}\|\A\v_{\varepsilon}\|_{\H}\leq\frac{\mu}{6}\|\A\v_{\varepsilon}\|^2_{\H} + \varepsilon^2C\|\z\|^2_{\H},\label{V_ab3}\\
		|(\f, \A\v_{\varepsilon})|&\leq \|\f\|_{\H}\|\A\v_{\varepsilon}\|_{\H}\leq\frac{\mu}{6}\|\A\v_{\varepsilon}\|^2_{\H} + C\|\f\|^2_{\H}.\label{V_ab4}
		\end{align}
		We estimate $|b(\u_{\varepsilon},\u_{\varepsilon},\A\v_{\varepsilon})|$ using H\"older's, and Young's inequalities as 
		\begin{align}\label{V_ab5}
		|b(\u_{\varepsilon},\u_{\varepsilon},\A\v_{\varepsilon})|&\leq\||\u_{\varepsilon}||\nabla\u_{\varepsilon}|\|_{\H}\|\A\v_{\varepsilon}\|_{\H}\leq\frac{\mu}{6}\|\A\v_{\varepsilon}\|_{\H}^2+\frac{3}{2\mu }\||\u_{\varepsilon}||\nabla\u_{\varepsilon}|\|_{\H}^2. 
		\end{align}
		Let us  estimate the final term from \eqref{V_ab5} using H\"older's and Young's inequalities as (similarly as in \eqref{2.29})
		\begin{align}\label{V_ab6}
		&	\int_{\mathbb{T}^3}|\u_{\varepsilon}(x)|^2|\nabla\u_{\varepsilon}(x)|^2\d x\nonumber\\&\leq{\frac{\beta\mu}{3} }\left(\int_{\mathbb{T}^3}|\u_{\varepsilon}(x)|^{r-1}|\nabla\u_{\varepsilon}(x)|^2\d x\right)+\frac{r-3}{r-1}\left[\frac{6}{\beta\mu (r-1)}\right]^{\frac{2}{r-3}}\left(\int_{\mathbb{T}^3}|\nabla\u_{\varepsilon}(x)|^2\d x\right).
		\end{align}
		Making use of the estimate \eqref{V_ab6} in \eqref{V_ab5}, we find
		\begin{align}\label{V_ab7}
		|b(\u_{\varepsilon},\u_{\varepsilon},\A\v_{\varepsilon})|&\leq\frac{\mu}{6}\|\A\v_{\varepsilon}\|_{\H}^2+\frac{\beta}{2}\||\nabla\u_{\varepsilon}||\u_{\varepsilon}|^{\frac{r-1}{2}}\|^2_{\H}+\frac{3(r-3)}{2\mu(r-1)}\left[\frac{6}{\beta\mu (r-1)}\right]^{\frac{2}{r-3}}\|\u_{\varepsilon}\|^2_{\V}\nonumber\\&\leq\frac{\mu}{6}\|\A\v_{\varepsilon}\|_{\H}^2+\frac{\beta}{2}\||\nabla\u_{\varepsilon}||\u_{\varepsilon}|^{\frac{r-1}{2}}\|^2_{\H} +C\|\v_{\varepsilon}\|^2_{\V}+\varepsilon^2C\|\z\|^2_{\V}.
		\end{align}
		For any $\omega\in\Omega$,	we infer from the inequalities \eqref{V_ab1}-\eqref{V_ab7} that 
		\begin{align}\label{V_ab8}
		&\frac{\d}{\d t}\|\v_{\varepsilon}(t)\|^2_{\V} +\mu\|\A\v_{\varepsilon}(t)\|^2_{\H} +\beta\||\nabla\u_{\varepsilon}(t)||\u_{\varepsilon}(t)|^{\frac{r-1}{2}}\|^2_{\H}+4\beta\left[\frac{r-1}{(r+1)^2}\right]\|\nabla|\u_{\varepsilon}(t)|^{\frac{r+1}{2}}\|^2_{\H} \nonumber\\
		&\leq C\|\v_{\varepsilon}(t)\|^2_{\V}+\varepsilon^2C\|\z(t)\|^2_{\V} + \varepsilon^{r+1}C\|\A\z(t)\|^{r+1}_{\H} +\varepsilon^2C\|\z(t)\|^2_{\H}+C\|\f\|^2_{\H},
		\end{align}
		for a.e. $t\in[0,T].$ From the uniform Gronwall lemma  (Lemma 1.1, \cite{R.Temam}) and  \eqref{H_ab4}, we deduce that for any $\omega\in\Omega,$ there exists $\kappa_{15}(\omega)\geq0$ and $\kappa_{16}(\omega)\geq0$ such that 
		\begin{align}
		\|\v_{\varepsilon}(0,\omega; s, \x-\varepsilon\z(s))\|_{\V}\leq \kappa_{15}(\omega)
		\end{align}
		and
		\begin{align}
		&	\mu\int_{-1}^{0}\|\A\v_{\varepsilon}(s)\|^2_{\H} \d s+\beta\int_{-1}^{0}\||\nabla\u_{\varepsilon}(t)||\u_{\varepsilon}(t)|^{\frac{r-1}{2}}\|^2_{\H}\d s+4\beta\left[\frac{r-1}{(r+1)^2}\right]\int_{-1}^{0}\|\nabla|\u_{\varepsilon}(t)|^{\frac{r+1}{2}}\|^2_{\H}\d s\nonumber\\& \leq \kappa_{16}(\omega),
		\end{align}
		for any $s\leq-(t_{\mathrm{D}}(\omega)+1),$ which competes the proof for the case $r>3.$
		\vskip 0.2 cm
		\noindent
		\textbf{Case II:}  $r=3$ and $2\beta\mu\geq1$.
		From \eqref{3}, we have
		\begin{align}
		(\mathcal{C}(\u_{\varepsilon}),\A\u_{\varepsilon})&=\||\nabla\u_{\varepsilon}||\u_{\varepsilon}|\|^2_{\H}+\frac{1}{2}\|\nabla|\u_{\varepsilon}|^2\|^2_{\H},\label{V_ab9}\\
		|(\mathcal{C}(\u_{\varepsilon}),\varepsilon\A\z)|&\leq\|\u_{\varepsilon}\|^3_{\L^{6}}\|\varepsilon\A\z\|_{\H}\leq C\|\u_{\varepsilon}\|^3_{\L^{12}}\|\varepsilon\A\z\|_{\H}=C\||\u_{\varepsilon}|^{2}\|^{\frac{3}{2}}_{\L^6}\|\varepsilon\A\z\|_{\H}\nonumber\\
		&\leq C\|\nabla|\u_{\varepsilon}|^{2}\|^{\frac{3}{2}}_{\H}\|\varepsilon\A\z\|_{\H}\leq \frac{1}{4}\|\nabla|\u_{\varepsilon}|^{2}\|^2_{\H} + \varepsilon^4C\|\A\z\|^{4}_{\H},\label{V_ab10}\\
		\alpha|(\varepsilon\z,\A\v_{\varepsilon})|&\leq \alpha \|\varepsilon\z\|_{\H}\|\A\v_{\varepsilon}\|_{\H}	\leq\frac{\mu}{4}\|\A\v_{\varepsilon}\|^2_{\H} + \varepsilon^2C\|\z\|^2_{\H},\label{V_ab11}\\
		|(\f, \A\v_{\varepsilon})|&\leq \|\f\|_{\H}\|\A\v_{\varepsilon}\|_{\H}\leq\frac{\mu}{4}\|\A\v_{\varepsilon}\|^2_{\H} + C\|\f\|^2_{\H}.\label{V_ab12}
		\end{align}
		We estimate $|b(\u_{\varepsilon},\u_{\varepsilon},\A\v_{\varepsilon})|$ using H\"older's and Young's inequalities as 
		\begin{align}\label{V_ab13}
		|b(\u_{\varepsilon},\u_{\varepsilon},\A\v_{\varepsilon})|&\leq\||\u_{\varepsilon}||\nabla\u_{\varepsilon}|\|_{\H}\|\A\v_{\varepsilon}\|_{\H}\leq\frac{1}{4\beta}\|\A\v_{\varepsilon}\|_{\H}^2+\beta\||\u_{\varepsilon}||\nabla\u_{\e}|\|_{\H}^2. 
		\end{align}
		For any $\omega\in\Omega$,	we infer from the inequalities \eqref{V_ab9}-\eqref{V_ab13} and \eqref{H_ab5} that 
		\begin{align}\label{V_ab14}
		\frac{\d}{\d t}\|\v_{\varepsilon}(t)\|^2_{\V} +\left(\mu-\frac{1}{2\beta}\right)\|\A\v_{\varepsilon}(t)\|^2_{\H} +\beta\|\nabla|\u_{\varepsilon}(t)|^{2}\|^2_{\H} \leq 
		\varepsilon^4C\|\A\z(t)\|^{4}_{\H} +\varepsilon^2C\|\z(t)\|^2_{\H}+C\|\f\|^2_{\H},
		\end{align}
		for a.e. $t\in[0,T].$ From the uniform Gronwall lemma and \eqref{H_ab4}, we deduce that for any $\omega\in\Omega,$ there exists $\kappa_{17}(\omega)\geq0$ and $\kappa_{18}(\omega)\geq0$ such that 
		\begin{align}
		\|\v_{\varepsilon}(0,\omega; s, \x-\varepsilon\z(s))\|_{\V}\leq \kappa_{17}(\omega)
		\end{align}
		and
		\begin{align}
		\left(\mu-\frac{1}{2\beta}\right)\int_{-1}^{0}\|\A\v_{\varepsilon}(s)\|^2_{\H} \d s+\frac{\beta}{2}\int_{-1}^{0}\|\nabla|\u_{\varepsilon}(t)|^{2}\|^2_{\H}\d s \leq \kappa_{18}(\omega),
		\end{align}
		for any $s\leq-(t_{\mathrm{D}}(\omega)+1),$ which competes the proof for $r=3$ and  $2\beta\mu\geq1$.
	\end{proof}
	
	Thanks to the compactness of $\V$ in $\H$, from Theorem \ref{H_ab} and the abstract theory of random attractors (Theorem 3.11, \cite{CF}), we immediately conclude the following result.
	
	\begin{theorem}\label{Main_Theoem}
		Assume that $\f\in\H$ and Assumption \ref{assump} holds. Then, the cocycle $\varphi_{\varepsilon}$ corresponding to the 3D stochastic CBF equations with small additive noise \eqref{S-CBF} for $r\geq3$ ($r>3$, for any $\mu$ and $\beta$,  $r=3$  for $2\beta\mu\geq1$) has a random attractor $\mathcal{A}_{\varepsilon} = \{\textbf{A}_{\varepsilon}(\omega): \omega\in \Omega\}$ in $\H$. 
	\end{theorem}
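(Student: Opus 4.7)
The plan is to derive Theorem \ref{Main_Theoem} as a direct consequence of the abstract existence result for random attractors (Theorem 3.11, \cite{CF}) once we verify its hypotheses for the RDS $\varphi_{\varepsilon}$ over the metric dynamical system $(\Omega,\hat{\mathscr{F}},\hat{\mathbb{P}},\hat{\theta})$. Recall that this abstract theorem asserts: an RDS on a Polish space that admits a $\mathfrak{DK}$-random absorbing set which is (or can be promoted to) $\mathfrak{DK}$-pullback asymptotically compact possesses a unique random $\mathfrak{DK}$-attractor. Thus the proof reduces to a short verification rather than new estimates.

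First, I would invoke Theorem \ref{H_ab}, which has already done the substantive work: it produces a family $\hat{\textbf{B}}=\{\textbf{B}(\omega):\omega\in\Omega\}$ of random $\mathfrak{DK}$-absorbing sets in $\V$ associated with $\varphi_{\varepsilon}$. Concretely, for any $\mathrm{D}\in\mathfrak{DK}$ and $\omega\in\Omega$, there exists $t_{\mathrm{D}}(\omega)\geq 0$ such that $\varphi_{\varepsilon}(t,\theta_{-t}\omega)\mathrm{D}(\theta_{-t}\omega)\subset\textbf{B}(\omega)$ for all $t\geq t_{\mathrm{D}}(\omega)+1$. I would then use the compactness of the embedding $\V\hookrightarrow\H$ (which is valid on the periodic domain $\mathbb{T}^3$ because of Rellich--Kondrachov together with the zero-mean constraint) to conclude that $\textbf{B}(\omega)$, being a bounded set in $\V$, is a relatively compact set in $\H$.

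Consequently, $\hat{\textbf{B}}$ is a $\mathfrak{DK}$-absorbing set in $\H$ whose sections are compact in $\H$. This is exactly the pullback asymptotic compactness property needed by Theorem 3.11 of \cite{CF}: for every $\mathrm{D}\in\mathfrak{DK}$, $\omega\in\Omega$, and every sequence $t_n\to\infty$, $\x_n\in\mathrm{D}(\theta_{-t_n}\omega)$, the sequence $\varphi_{\varepsilon}(t_n,\theta_{-t_n}\omega)\x_n$ eventually lies in the compact set $\overline{\textbf{B}(\omega)}^{\H}$ and therefore admits an $\H$-convergent subsequence. Combined with the measurability of $\omega\mapsto\textbf{B}(\omega)$ inherited from the construction via the Ornstein--Uhlenbeck process and the continuity of $\varphi_{\varepsilon}$ on $\H$ established in Theorem \ref{RDS_Conti}, all the hypotheses of the abstract attractor theorem are satisfied.

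Applying Theorem 3.11 of \cite{CF} then yields the existence of a unique random $\mathfrak{DK}$-attractor $\mathcal{A}_{\varepsilon}=\{\textbf{A}_{\varepsilon}(\omega):\omega\in\Omega\}$ in $\H$, given by the standard formula
\begin{equation*}
\textbf{A}_{\varepsilon}(\omega)=\bigcap_{s\geq 0}\overline{\bigcup_{t\geq s}\varphi_{\varepsilon}(t,\theta_{-t}\omega)\textbf{B}(\theta_{-t}\omega)}^{\,\H},\qquad \omega\in\Omega.
\end{equation*}
The only genuine technical point to watch is the measurability of the absorbing set and the $\theta$-invariance of the null set on which the pathwise estimates hold; both, however, are already built into the construction of $(\Omega,\hat{\mathscr{F}},\hat{\mathbb{P}},\hat{\theta})$ via Proposition \ref{m-DS} and the stationarity of $\z_{\alpha}$, so I do not expect a substantial obstacle. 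In short, the main (and only non-trivial) step is the absorbing estimate in $\V$, which is already in hand; everything else is a bookkeeping application of the abstract machinery.
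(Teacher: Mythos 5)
Your proposal is correct and follows essentially the same route as the paper: the authors likewise deduce Theorem \ref{Main_Theoem} immediately from the $\mathfrak{DK}$-absorbing set in $\V$ furnished by Theorem \ref{H_ab}, the compactness of the embedding $\V\hookrightarrow\H$, and the abstract existence result (Theorem 3.11, \cite{CF}). Your write-up merely makes explicit the intermediate bookkeeping (asymptotic compactness, measurability, and the standard omega-limit formula for $\textbf{A}_{\varepsilon}(\omega)$) that the paper leaves implicit.
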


	\section{Upper semi-continuity of $\mathfrak{DK}$-random attractors in $\H$}\label{sec5}\setcounter{equation}{0}
	The aim of this section is to prove the upper semicontinuity of random attractors in $\H$. The existence of random attractors for the stochastic system \eqref{S-CBF} in $\H$ is proved in Theorem \ref{Main_Theoem} and the existence of global attractors for the deterministic system \eqref{D-CBF}  in $\H$ is proved in Theorem \ref{Det-Attractor}. Upper semicontinuity results for several infinite dimensional  stochastic models is obtained in \cite{CLR, KM1,FLYB} etc. In particular, authors in \cite{KM1}, obtained the upper semicontinuity results for 2D SCBF equations for $r\in[1,3]$. Using similar techniques used in the work \cite{CLR}, we state and prove the following result on the upper semicontinuity of the random attractors: 
	\begin{theorem}\label{USC}
		For $r\geq3$ ($r>3$, for any $\mu$ and $\beta$,  and $r=3$  for $2\beta\mu\geq1$), assume that $\f\in\H$ and Assumption \ref{assump} is satisfied. Also, assume that the deterministic system \eqref{D-CBF} has a global attractor $\mathcal{A}$ and its small random perturbed dynamical system \eqref{S-CBF} possesses a $\mathfrak{DK}$-random attractor $\mathcal{A}_\varepsilon= \{\mathbf{A}_\varepsilon(\omega):\omega\in \Omega\},$ for any $\varepsilon\in (0, 1].$ Let the following conditions hold:
		\begin{itemize}
			\item [$(K_1)$] For each $t_0\geq0$ and for $\hat{\mathbb{P}}$-a.e. $\omega\in\Omega,$ $$\lim_{\varepsilon\to0^+} d(\varphi_\varepsilon(t_0, \theta_{-t_0}\omega)\x,\S(t)\x)=0,$$
			uniformly on bounded sets of $\H$, where $\varphi_\varepsilon$ is a RDS and $\S(t)$ is a semigroup generated by \eqref{S-CBF} and \eqref{D-CBF}, respectively with the same initial condition $\x\in\H$.  
			\item[$(K_2)$] There exists a compact set $K\subset \H$ such that 
			$$\lim_{\varepsilon\to 0^+} d(\mathbf{A}_{\varepsilon}(\omega), K)=0,$$for $\hat{\mathbb{P}}$-a.e. $\omega\in\Omega.$
		\end{itemize}
		Then $\mathcal{A}_\varepsilon$ and $\mathcal{A}$ have the property of upper semi-continuity, that is,\begin{align}\label{51}\lim_{\varepsilon\to 0^+} d(\mathcal{A}_{\varepsilon}(\omega), \mathcal{A})=0,\end{align}for $\hat{\mathbb{P}}$-a.e. $\omega\in\Omega.$

		Furthermore, if for $\varepsilon_0\in (0,1]$ we have that for $\hat{\mathbb{P}}$-a.e. $\omega\in\Omega$ and all $t_0>0$ \begin{align}\label{K_3}
		\varphi_{\varepsilon}(t_0, \theta_{-t_0}\omega)\x \to \varphi_{\varepsilon_0}(t_0, \theta_{-t_0}\omega)\x\ \text{ as }\ \varepsilon\to\varepsilon_0,
		\end{align} uniformly on bounded sets of $\H$, then the convergence \eqref{K_3} is upper semicontinuous in $\varepsilon$, that is, \begin{align}\label{52}
		\lim_{\varepsilon\to\varepsilon_0} d(\mathcal{A}_{\varepsilon}(\omega),\mathcal{A}_{\varepsilon_0}(\omega)) = 0,
		\end{align}for $\hat{\mathbb{P}}$-a.e. $\omega\in\Omega.$
		
	\end{theorem}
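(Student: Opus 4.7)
\medskip

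\textbf{Proof proposal.} The strategy is the classical one of Caraballo--Langa--Robinson \cite{CLR}: argue by contradiction using the invariance of the random attractor together with the uniform convergence in $(K_1)$, the precompactness in $(K_2)$, and the attraction property of the deterministic attractor $\mathcal{A}$ established in Theorem \ref{Det-Attractor}.

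Fix $\omega\in\Omega$ outside the null set on which $(K_1)$ and $(K_2)$ fail, and suppose for contradiction that \eqref{51} is false. Then there exist $\delta>0$, a sequence $\varepsilon_n\downarrow 0$, and elements $\x_n\in\mathbf{A}_{\varepsilon_n}(\omega)$ with
\begin{equation*}
\inf_{z\in\mathcal{A}}\|\x_n-z\|_{\H}\geq\delta\quad\text{for all }n.
\end{equation*}
Since $\mathcal{A}_{\varepsilon_n}$ is a $\mathfrak{DK}$-random attractor, it is $\varphi_{\varepsilon_n}$-invariant: for every $t_0>0$ we may write $\x_n=\varphi_{\varepsilon_n}(t_0,\theta_{-t_0}\omega)\y_n$ for some $\y_n\in\mathbf{A}_{\varepsilon_n}(\theta_{-t_0}\omega)$. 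By hypothesis $(K_2)$, applied at $\theta_{-t_0}\omega$, the set $\bigcup_n\mathbf{A}_{\varepsilon_n}(\theta_{-t_0}\omega)$ has its asymptotic distance to a compact set $K\subset\H$ tending to zero, so $\{\y_n\}$ is precompact in $\H$; extracting a subsequence (not relabelled) we have $\y_n\to\y\in K$ in $\H$.

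The key step is to pass to the limit in $\x_n=\varphi_{\varepsilon_n}(t_0,\theta_{-t_0}\omega)\y_n$. Split
\begin{equation*}
\|\x_n-\S(t_0)\y\|_{\H}\leq\|\varphi_{\varepsilon_n}(t_0,\theta_{-t_0}\omega)\y_n-\S(t_0)\y_n\|_{\H}+\|\S(t_0)\y_n-\S(t_0)\y\|_{\H}.
\end{equation*}
The first term tends to zero by $(K_1)$, because the $\y_n$'s lie in a bounded set (being relatively compact) and the convergence in $(K_1)$ is uniform on bounded subsets of $\H$; the second term tends to zero by the $\H$-continuity of $\S(t_0)$ furnished by Theorem \ref{D-Sol} and the semigroup definition \eqref{semigroup}. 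Hence $\x_n\to\S(t_0)\y$ in $\H$, which yields
\begin{equation*}
\delta\leq\inf_{z\in\mathcal{A}}\|\x_n-z\|_{\H}\ \longrightarrow\ \inf_{z\in\mathcal{A}}\|\S(t_0)\y-z\|_{\H}=d(\S(t_0)\y,\mathcal{A}).
\end{equation*}
Since $K$ is bounded in $\H$ and $\mathcal{A}$ attracts bounded sets of $\H$ (Theorem \ref{Det-Attractor}), we can first choose $t_0$ so large that $d(\S(t_0)K,\mathcal{A})<\delta/2$, and then run the above argument at that fixed $t_0$: this contradicts the lower bound $\delta$ and proves \eqref{51}.

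For the second conclusion \eqref{52}, we repeat the argument verbatim with $0$ replaced by $\varepsilon_0$ and the deterministic semigroup $\S(t)$ replaced by the RDS $\varphi_{\varepsilon_0}(t,\theta_{-t}\omega)$; hypothesis $(K_3)$ takes the role of $(K_1)$, while the attraction step now uses that $\mathcal{A}_{\varepsilon_0}$ is itself a $\mathfrak{DK}$-random attractor, so the orbit of the bounded limit set $K$ under $\varphi_{\varepsilon_0}$ is absorbed by $\mathbf{A}_{\varepsilon_0}(\omega)$ as $t_0\to\infty$. I expect the main technical obstacle to be this uniform-on-bounded-sets passage to the limit: the $\y_n$'s move with $n$, so pointwise convergence in $(K_1)$ or $(K_3)$ would not suffice, and one genuinely needs the uniform version together with continuous dependence of $\S(t_0)$ (respectively $\varphi_{\varepsilon_0}(t_0,\theta_{-t_0}\omega)$) on initial data, which we have from Theorems \ref{D-Sol} and \ref{RDS_Conti}.
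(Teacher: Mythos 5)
Your argument is a correct proof of the abstract implication that the theorem literally asserts: assuming $(K_1)$ and $(K_2)$, the contradiction argument via invariance of $\mathcal{A}_{\varepsilon_n}$, precompactness of the pre-images $\y_n$ from $(K_2)$, passage to the limit using the uniform-on-bounded-sets convergence in $(K_1)$ together with continuity of $\S(t_0)$ in $\H$, and the quantifier fix of choosing $t_0$ \emph{first} so that $d(\S(t_0)K,\mathcal{A})<\delta/2$, is exactly the Caraballo--Langa--Robinson scheme and is sound (modulo the routine measure-theoretic point that the exceptional null sets in $(K_1)$, $(K_2)$ must be arranged to be $\theta$-invariant and independent of $t_0$, e.g.\ by intersecting over rational $t_0$).

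However, the paper's proof takes that implication entirely from \cite{CLR} and spends all of its effort on something you have assumed away: \emph{verifying} that $(K_1)$, $(K_2)$ and \eqref{K_3} actually hold for the 3D SCBF system. That verification is the analytic content of the section. For $(K_2)$ one uses the compact $\mathfrak{DK}$-absorbing ball in $\V$ from Theorem \ref{H_ab} and checks that its radius $\hat{\kappa}_{\varepsilon}(\omega)+\varepsilon\|\z(0)\|_{\V}$ stays bounded by an $\omega$-independent constant as $\varepsilon\to0^{+}$. For $(K_1)$ one sets $\uprho_{\varepsilon}=\u_{\varepsilon}-\u-\varepsilon\z$, with $\z$ the Ornstein--Uhlenbeck process of \eqref{OUPe}, and runs an energy estimate on $\uprho_{\varepsilon}$ using the trilinear/monotonicity bounds \eqref{441}, \eqref{2.16}, \eqref{MO_c} (split into the cases $r>3$ and $r=3$, $2\beta\mu\geq1$) plus Gronwall, exploiting $\z\in\mathrm{L}^{\infty}_{\mathrm{loc}}(\V)\cap\mathrm{L}^{r+1}_{\mathrm{loc}}(\D(\A))$ to conclude $\|\uprho_{\varepsilon}(0)\|_{\H}\to0$ uniformly over bounded sets of initial data; \eqref{K_3} is obtained by the same estimate with $\varepsilon^{*}=\varepsilon-\varepsilon_0$. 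So, read as a conditional statement, your proof is complete and in fact supplies the step the paper leaves implicit; read as the paper intends (upper semicontinuity for the concrete CBF system), your proposal is missing the verification of its own hypotheses, which is where the work lies. A self-contained treatment needs both halves. One further small point for your second part: to extract a convergent subsequence of the $\y_n$ as $\varepsilon\to\varepsilon_0$ you need the analogue of $(K_2)$ near $\varepsilon_0$ (precompactness of $\bigcup_{\varepsilon\text{ near }\varepsilon_0}\mathbf{A}_{\varepsilon}(\theta_{-t_0}\omega)$), which again comes from the uniform $\V$-ball, and the attraction of the deterministic compact set $K$ by $\mathcal{A}_{\varepsilon_0}$ is pullback attraction, available because constant bounded sets belong to the universe $\mathfrak{DK}$.
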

	\begin{proof}
		To prove the property of upper semicontinuity for our system, we only need to verify the conditions $(K_1)$ and $(K_2)$.
		\vskip 0.2cm
		\noindent\textbf{Step I.} \emph{Verification of $(K_2)$:} Let us introduce $$\v_\varepsilon(t,\omega)=\u_\varepsilon(t, \omega)-\varepsilon\z(t, \omega),$$ where $\u_\varepsilon(t, \omega)$ and $\z(t,\omega)$ are the unique solutions of \eqref{S-CBF} and \eqref{OUPe}, respectively. Also from \eqref{O-U_conti}, we have $\z\in\mathrm{L}^{\infty}_{\mathrm{loc}}([t_0,\infty);\V)\cap\mathrm{L}^{r+1}_{\mathrm{loc}}([t_0,\infty);\D(\A))$. Clearly, $\v_\varepsilon(\cdot)$ satisfies
		\begin{equation}\label{cscbf_s}
		\begin{aligned}
		\frac{\d\v_\varepsilon(t)}{\d t} &= -\mu \A\v_\varepsilon(t)- \B(\v_\varepsilon (t)+ \varepsilon\z(t)) - \beta \mathcal{C}(\v_\varepsilon (t)+ \varepsilon\z(t)) + \varepsilon\alpha \z(t) + \f, 
		\end{aligned}
		\end{equation}
		in $\V'+\wi\L^{\frac{r+1}{r}}$.	From Theorem \ref{H_ab}, we observe that there exists $\hat{\kappa}_\varepsilon(\omega)\in\mathfrak{K}$-class such that $$\|\v_\varepsilon(0)\|_{\V}\leq\hat{\kappa}_\varepsilon(\omega).$$ If we call $K_\varepsilon(\omega),$ the ball in $\V$ of radius $\hat{\kappa}_\varepsilon(\omega)+\varepsilon\|\z(0)\|_{\V}$, we have a compact (since $\V\hookrightarrow\H$ is compact) $\mathfrak{DK}$-absorbing set in $\H$ for $\varphi_\varepsilon$. Furthermore, there exists a $\hat{\kappa}_d$ independent of $\omega\in \Omega$ such that $$\lim_{\varepsilon\to0^+} \hat{\kappa}_\varepsilon(\omega)\leq\hat{\kappa}_d,$$which easily verifies  Lemma 1, \cite{CLR} and hence $(K_2)$ follows.
		\vskip 0.2cm
		\noindent
		\textbf{Step II.} \emph{Verification of $(K_1)$:} In order to verify the assertion $(K_1)$, it is enough to prove that the solution $\varphi_\varepsilon(t,\omega)\x$ of system \eqref{S-CBF} $\hat{\mathbb{P}}$-a.s. converges to the solution $\S(t)\x$ of the unperturbed system \eqref{D-CBF} in $\H$ as $\varepsilon\to 0^+$ uniformly on bounded sets of initial conditions. That is, for $\hat{\mathbb{P}}$-a.e $\omega\in\Omega$, any $t_0\geq0$ and any bounded subset $\G\subset\H$, we have 
		\begin{align}\label{usc}
		\lim_{\varepsilon\to 0^+}\sup_{\x\in \G} \|\varphi_\varepsilon(t_0,\theta_{-t_0}\omega)\x-\S(t_0)\x\|_{\H}=0.
		\end{align}
		For any $\x\in \G,$ let $\u_\varepsilon(t)=\varphi_{\varepsilon}(t+t_0,\theta_{-t_0})\x$ and $\u(t)=\S(t+t_0)\x$ respectively, be the unique weak solutions of the systems \eqref{S-CBF} and \eqref{D-CBF} with the initial condition $\x$ at $t=-t_0$. Also, for $T\geq 0$, let $$\upeta_\varepsilon(t)= \u_\varepsilon(t)-\u(t), \ t\in[-t_0,T].$$ Clearly $\upeta_\varepsilon(\cdot)$ satisfies
		\begin{equation}\label{cSCBF_y}
		\left\{
		\begin{aligned}
		\d\upeta_\varepsilon+\{\mu \A\upeta_\varepsilon+\B(\upeta_\varepsilon+\u)-\B(\u)+\beta \mathcal{C}(\upeta_\varepsilon+\u)-\beta\mathcal{C}(\u)\}\d t&= \varepsilon\d\text{W}(t),  \\ 
		\upeta_\varepsilon(-t_0)&=\mathbf{0},
		\end{aligned}
		\right.
		\end{equation}
		in $\V'+\wi\L^{\frac{r+1}{r}},$  for a.e. $ t\in[-t_0,T]$.	Let us introduce $\uprho_\varepsilon(\cdot)=\upeta_\varepsilon(\cdot)-\varepsilon\z(\cdot)$, where $\z(\cdot)$ is the solution of \eqref{OUPe}, then $\uprho_\varepsilon(\cdot)$ satisfies the following equation in $\V'$:
		\begin{equation}\label{cscbf_eta1}
		\left\{
		\begin{aligned}
		\frac{\d\uprho_\varepsilon}{\d t} &= -\mu \A\uprho_\varepsilon - \B(\uprho_\varepsilon + \varepsilon\z+\u)+\B(\u) - \beta \mathcal{C}(\uprho_\varepsilon + \varepsilon\z +\u)+\beta\mathcal{C}(\u) +\varepsilon \alpha \z, \\
		\uprho_\varepsilon(-t_0)&= -\varepsilon \z(-t_0),
		\end{aligned}
		\right.
		\end{equation}
		in $\V'+\wi\L^{\frac{r+1}{r}},$  for a.e. $ t\in[-t_0,T]$.	Taking the inner product of the first equation of \eqref{cscbf_eta1} with $\uprho_\varepsilon(\cdot)$, we obtain 
		\begin{align}\label{usc1}
		\frac{1}{2} \frac{\d}{\d t}\|\uprho_\varepsilon(t)\|^2_{\H}  &=-  \mu \|\uprho_\varepsilon(t)\|^2_{\V} - \big\langle\B(\uprho_\varepsilon(t) + \varepsilon\z(t)+\u(t))- \B(\u(t)), \uprho_\varepsilon(t)\big\rangle\nonumber\\&\ \ \ - \beta\big\langle\mathcal{C}(\uprho_\varepsilon(t) + \varepsilon\z(t)+\u(t))-\mathcal{C}(\u(t)), \uprho_\varepsilon(t)\big\rangle + \alpha (\varepsilon \z(t), \uprho_\varepsilon(t)), \nonumber\\&=-  \mu \|\uprho_\varepsilon(t)\|^2_{\V}+\big\langle\B(\uprho_\varepsilon(t) + \varepsilon\z(t)+\u(t)), \varepsilon \z(t)\big\rangle-\big\langle\B(\u(t)), \varepsilon \z(t)\big\rangle\nonumber\\&\ \ \ -\big\langle\B(\uprho_\varepsilon(t) + \varepsilon\z(t)+\u(t))- \B(\u(t)), (\uprho_\varepsilon(t) + \varepsilon\z(t)+\u(t))-(\u(t))\big\rangle\nonumber\\&\ \ \ +\beta\big\langle\mathcal{C}(\uprho_\varepsilon(t) + \varepsilon\z(t)+\u(t))-\mathcal{C}(\u(t)), \varepsilon \z(t)\big\rangle\nonumber\\&\ \ \ - \beta\big\langle\mathcal{C}(\uprho_\varepsilon(t) + \varepsilon\z(t)+\u(t))-\mathcal{C}(\u(t)),(\uprho_\varepsilon(t) + \varepsilon\z(t)+\u(t))-(\u(t))\big\rangle\nonumber\\&\quad + \alpha (\varepsilon\z(t), \uprho_\varepsilon(t)),
		\end{align}	
		for a.e. $\in[-t_0,T]$. 
		\vskip 0.2 cm
		\noindent
		\textbf{Case I:} $r>3$.
		Using  \eqref{212}, we have
		\begin{align}
		|\big\langle\B(\uprho_\varepsilon + \varepsilon\z+\u), \varepsilon\z\big\rangle|\leq&\|\uprho_\varepsilon + \varepsilon\z+\u\|_{\widetilde{\L}^{r+1}}^{\frac{r+1}{r-1}}\|\uprho_\varepsilon + \varepsilon\z+\u\|_{\H}^{\frac{r-3}{r-1}}\|\varepsilon\z\|_{\V}\nonumber\\\leq&C\|\uprho_\varepsilon + \varepsilon\z+\u\|_{\widetilde{\L}^{r+1}}^{\frac{r+1}{r-1}}\|\uprho_\varepsilon + \varepsilon\z+\u\|_{\V}^{\frac{r-3}{r-1}}\|\varepsilon\z\|_{\V},\label{usc2}\\
		|\big\langle\B(\u), \varepsilon\z\big\rangle|\leq&\|\u\|_{\widetilde{\L}^{r+1}}^{\frac{r+1}{r-1}}\|\u\|_{\H}^{\frac{r-3}{r-1}}\|\varepsilon\z\|_{\V}\leq C\|\u\|_{\widetilde{\L}^{r+1}}^{\frac{r+1}{r-1}}\|\u\|_{\V}^{\frac{r-3}{r-1}}\|\varepsilon\z\|_{\V}.\label{usc3}
		\end{align}
		Using \eqref{441} and Young's inequality (as in equation \eqref{2.16}), we deduce that 
		\begin{align}\label{usc4}
		&|\langle\B(\uprho_\varepsilon + \varepsilon\z+\u)-\B(\u),(\uprho_\varepsilon + \varepsilon\z+\u)-(\u)\rangle|\nonumber\\&\leq\frac{\mu }{4}\|(\uprho_\varepsilon + \varepsilon\z+\u)-(\u)\|_{\V}^2+\frac{\beta}{4}\||\u|^{\frac{r-1}{2}}((\uprho_\varepsilon + \varepsilon\z+\u)-(\u))\|_{\H}^2+\eta_4\|(\uprho_\varepsilon + \varepsilon\z+\u)-(\u)\|_{\H}^2\nonumber\\&\leq\frac{\mu}{2}\|\uprho_\varepsilon\|_{\V}^2+C\|\varepsilon\z\|_{\V}^2+\frac{\beta}{4}\||\u|^{\frac{r-1}{2}}((\uprho_\varepsilon + \varepsilon\z+\u)-(\u))\|_{\H}^2+2\eta_4\|\uprho_\varepsilon\|_{\H}^2,
		\end{align}
		where $\eta_4=\frac{r-3}{\mu(r-1)}\left(\frac{8}{\beta\mu (r-1)}\right)^{\frac{2}{r-3}}.$
		Now using Taylor's formula, we obtain 
		\begin{align}\label{usc5}
		&\beta\big|\big\langle\mathcal{C}(\uprho_\varepsilon + \varepsilon\z+\u)-\mathcal{C}(\u), \varepsilon \z\big\rangle\big|\nonumber\\&= \beta\bigg|\bigg\langle \int_{0}^{1} \big[\mathcal{C}'(\theta(\uprho_\varepsilon + \varepsilon\z+\u) + (1-\theta)\u)((\uprho_\varepsilon + \varepsilon\z+\u)-(\u))\big] \d\theta ,\varepsilon\z\bigg\rangle\bigg|\nonumber\\& \leq r\beta2^{r-2} \||(\uprho_\varepsilon + \varepsilon\z+\u)|^{\frac{r-1}{2}}(\uprho_\varepsilon + \varepsilon\z)\|_{\H} \|(\uprho_\varepsilon + \varepsilon\z+\u)\|_{\widetilde{\L}^{r+1}}^{\frac{r-1}{2}} \|\varepsilon\z\|_{\widetilde{\L}^{r+1}}\nonumber\\&\quad+r\beta2^{r-2} \||\u|^{\frac{r-1}{2}}(\uprho_\varepsilon + \varepsilon\z)\|_{\H} \|\u\|_{\widetilde{\L}^{r+1}}^{\frac{r-1}{2}} \|\varepsilon\z\|_{\widetilde{\L}^{r+1}}\nonumber\\&\leq\frac{\beta}{2} \||(\uprho_\varepsilon + \varepsilon\z+\u)|^{\frac{r-1}{2}}(\uprho_\varepsilon + \varepsilon\z)\|^2_{\H}+C \|\uprho_\varepsilon + \varepsilon\z+\u\|_{\widetilde{\L}^{r+1}}^{r-1} \|\varepsilon\z\|^2_{\widetilde{\L}^{r+1}}\nonumber\\&\quad+\frac{\beta}{4} \||\u|^{\frac{r-1}{2}}(\uprho_\varepsilon + \varepsilon\z)\|^2_{\H}+C \|\u\|_{\widetilde{\L}^{r+1}}^{r-1} \|\varepsilon\z\|^2_{\widetilde{\L}^{r+1}}.
		\end{align}
		By \eqref{MO_c}, we have
		\begin{align}\label{usc6}
		&-\beta\big\langle\mathcal{C}(\uprho_\varepsilon + \varepsilon\z+\u)-\mathcal{C}(\u),(\uprho_\varepsilon + \varepsilon\z+\u)-(\u)\big\rangle \nonumber\\&\leq-\frac{\beta}{2}\||\uprho_\varepsilon + \varepsilon\z+\u|^{\frac{r-1}{2}}((\uprho_\varepsilon + \varepsilon\z+\u)-(\u))\|_{\H}^2-\frac{\beta}{2}\||\u|^{\frac{r-1}{2}}((\uprho_\varepsilon + \varepsilon\z+\u)-(\u))\|_{\H}^2.
		\end{align}
		Using H\"older's and Young's inequalities, we get 
		\begin{align}
		\alpha|(\varepsilon\z, \uprho_\varepsilon)|& \leq \alpha \|\uprho_\varepsilon\|_{\H} \|\varepsilon\z\|_{\H} \leq \frac{1 }{2} \|\uprho_\varepsilon\|^2_{\H} + \frac{\alpha^2}{2} \|\varepsilon\z\|^2_{\H} \leq \frac{1}{2} \|\uprho_\varepsilon\|^2_{\H} + \frac{\alpha^2C}{2} \|\varepsilon\z\|^2_{\V}.\label{usc7}
		\end{align}
		Combining \eqref{usc2}-\eqref{usc7} and then substituting it in \eqref{usc1}, we find
		\begin{align}\label{usc8}
		&\frac{\d}{\d t} \|\uprho_\varepsilon(t)\|_{\H}^2 + \mu\|\uprho_\varepsilon(t)\|_{\V}^2 \nonumber\\&\leq C\bigg\{\|\uprho_\varepsilon(t) + \varepsilon\z(t)+\u(t)\|_{\widetilde{\L}^{r+1}}^{\frac{r+1}{r-1}}\|\uprho_\varepsilon(t) + \varepsilon\z(t)+\u(t)\|_{\V}^{\frac{r-3}{r-1}}+\|\u(t)\|_{\widetilde{\L}^{r+1}}^{\frac{r+1}{r-1}}\|\u(t)\|_{\V}^{\frac{r-3}{r-1}}\bigg\}\nonumber\\&\qquad\times\|\varepsilon\z(t)\|_{\V}+C\|\varepsilon\z(t)\|_{\V}^2+\wi\eta_4\|\uprho_\varepsilon(t)\|_{\H}^2+C\bigg\{\|\uprho_\varepsilon(t) + \varepsilon\z(t)+\u(t)\|_{\widetilde{\L}^{r+1}}^{r-1} +\|\u(t)\|_{\widetilde{\L}^{r+1}}^{r-1}\bigg\}\nonumber\\&\qquad\times\|\varepsilon\z(t)\|^2_{\widetilde{\L}^{r+1}} \nonumber\\&\leq C\bigg\{\|\u_\varepsilon(t)\|_{\widetilde{\L}^{r+1}}^{\frac{r+1}{r-1}}\|\u_\varepsilon(t)\|_{\V}^{\frac{r-3}{r-1}}+\|\u(t)\|_{\widetilde{\L}^{r+1}}^{\frac{r+1}{r-1}}\|\u(t)\|_{\V}^{\frac{r-3}{r-1}}\bigg\}\|\varepsilon\z(t)\|_{\V}+C\|\varepsilon\z(t)\|_{\V}^2+\wi\eta_4\|\uprho_\varepsilon(t)\|_{\H}^2\nonumber\\&\quad+C\bigg\{\|\u_\varepsilon(t)\|_{\widetilde{\L}^{r+1}}^{r-1} +\|\u(t)\|_{\widetilde{\L}^{r+1}}^{r-1}\bigg\}\|\varepsilon\z(t)\|^2_{\widetilde{\L}^{r+1}} 
		\end{align}
		where $\wi\eta_4=4\eta_4+1,$ for a.e. $t\in[-t_0,T]$. By integrating the above inequality from $-t_0$ to $t$, $t\in[-t_0,T]$, we get 
		\begin{align}\label{usc9}
		\|\uprho_\varepsilon(t)\|^2_{\H} + \mu \int_{-t_0}^{t} \|\uprho_\varepsilon(s) \|^2_{\V}\d s\leq \|\uprho_\varepsilon(-t_0)\|^2_{\H} + \int_{-t_0}^{t} \wi\eta_4 \|\uprho_\varepsilon(s)\|^2_{\H} \d s + \int_{-t_0}^{t} \upalpha_\varepsilon(s)  \d s, 
		\end{align}
		where
		\begin{align*}
		\upalpha_\varepsilon& = C\bigg\{\|\u_{\varepsilon}\|_{\widetilde{\L}^{r+1}}^{\frac{r+1}{r-1}}\|\u_{\varepsilon}\|_{\V}^{\frac{r-3}{r-1}}+\|\u\|_{\widetilde{\L}^{r+1}}^{\frac{r+1}{r-1}}\|\u\|_{\V}^{\frac{r-3}{r-1}}\bigg\}\|\varepsilon\z\|_{\V}+C\bigg\{\|\u_{\varepsilon}\|_{\widetilde{\L}^{r+1}}^{r-1}+\|\u\|_{\widetilde{\L}^{r+1}}^{r-1}\bigg\} \|\varepsilon\z\|^2_{\widetilde{\L}^{r+1}}\nonumber\\&\qquad+C \|\varepsilon\z\|^2_{\V}.
		\end{align*}
		Then by the classical Gronwall inequality, we deduce that 
		\begin{align}\label{usc10}
		\|\uprho_\varepsilon(t)\|^2_{\H}
		\leq\bigg(\frac{\varepsilon^2}{\lambda_1}\|\z(-t_0)\|^2_{\V}+\int_{-t_0}^{t}\upalpha_{\varepsilon}(s)\d s\bigg)e^{\wi\eta_4 (t+t_0)},
		\end{align}
		for all $t\in [-t_0,T]$. On the other hand, we have
		\begin{align*}
		&\int_{-t_0}^{t}\upalpha_\varepsilon(s)\d s\\&\leq\int_{-t_0}^{t} \bigg[\varepsilon C\bigg\{\|\u_{\varepsilon}(s)\|_{\widetilde{\L}^{r+1}}^{\frac{r+1}{r-1}}\|\u_{\varepsilon}(s)\|_{\V}^{\frac{r-3}{r-1}}+\|\u(s)\|_{\widetilde{\L}^{r+1}}^{\frac{r+1}{r-1}}\|\u(s)\|_{\V}^{\frac{r-3}{r-1}}\bigg\}\|\z(s)\|_{\V}+\varepsilon^2C\bigg\{\|(\u_{\varepsilon}(s))\|_{\widetilde{\L}^{r+1}}^{r-1}\\&\quad+\|(\u(s))\|_{\widetilde{\L}^{r+1}}^{r-1} \bigg\}\|\A\z(s)\|^2_{\H}+\varepsilon^2C \|\z(s)\|^2_{\V}\bigg]\d s,\\
		&	\leq\varepsilon C(t+t_0)^{1/2}\bigg[\|\u_{\varepsilon}\|^{\frac{r+1}{r-1}}_{\mathrm{L}^{r+1}([-t_0,t];\widetilde{\L}^{r+1})}\|\u_{\varepsilon}\|^{\frac{r-3}{r-1}}_{\mathrm{L}^{2}([-t_0,t];\V)} +\|\u\|^{\frac{r+1}{r-1}}_{\mathrm{L}^{r+1}([-t_0,t];\widetilde{\L}^{r+1})}\|\u\|^{\frac{r-3}{r-1}}_{\mathrm{L}^{2}([-t_0,t];\V)}\bigg]\\&\qquad\times\|\z\|_{\mathrm{L}^{\infty}([-t_0,t];\V)}+\varepsilon^2C\bigg[\|\u_{\varepsilon}\|^{r-1}_{\mathrm{L}^{r+1}([-t_0,t];\widetilde{\L}^{r+1})}+\|\u\|^{r-1}_{\mathrm{L}^{r+1}([-t_0,t];\widetilde{\L}^{r+1})}\bigg]\|\z\|^2_{\mathrm{L}^{r+1}([-t_0,t];\D(\A))}\\&\quad+\varepsilon^2C(t+t_0)\|\z\|^2_{\mathrm{L}^{\infty}([-t_0,t];\V)}.
		\end{align*}
		Since the processes $\u_\varepsilon, \u\in \mathrm{L}^{\infty}_{\mathrm{loc}}([-t_0,\infty);\H)\cap\mathrm{L}^{2}_{\mathrm{loc}}([-t_0,\infty);\V)\cap\mathrm{L}^{r+1}_{\mathrm{loc}}([-t_0,\infty);\widetilde{\L}^{r+1})$, and $\z\in\mathrm{L}^{\infty}_{\mathrm{loc}}([-t_0,\infty);\V)\cap\mathrm{L}^{r+1}_{\mathrm{loc}}([-t_0,\infty);\D(\A))$,  therefore  $\int_{-t_0}^{t}\upalpha_\varepsilon(s)\d s\to 0 \text{ as } \varepsilon\to 0$. Hence, by \eqref{usc10}, we immediately have  $$\lim_{\varepsilon\to 0^+}\|\uprho_\varepsilon(t)\|^2_{\H}=0,$$ which completes the proof of \eqref{usc} by taking $t=0$. Hence $(K_2)$ is verified for $r>3$.\vskip 0.2 cm\noindent 
		\textbf{Case II:} $r=3$ and $2\beta\mu\geq1$.
		By H\"older's inequality, \eqref{lady} and \eqref{poin}, we have
		\begin{align}
		|\big\langle\B(\uprho_\varepsilon + \varepsilon\z+\u),\varepsilon \z\big\rangle|&\leq\|\uprho_\varepsilon + \varepsilon\z+\u\|_{\widetilde{\L}^{4}}^{2}\|\varepsilon\z\|_{\V}\leq C \|\uprho_\varepsilon + \varepsilon\z+\u\|_{\V}^{2}\|\varepsilon\z\|_{\V},\label{usc11}\\
		|\big\langle\B(\u), \varepsilon\z\big\rangle|&\leq\|\u\|_{\widetilde{\L}^{4}}^{2}\|\varepsilon\z\|_{\V}\leq C\|\u\|_{\V}^{2}\|\varepsilon\z\|_{\V}.\label{usc12}
		\end{align}
		Using \eqref{441} and H\"older's inequality, we find 
		\begin{align}\label{usc13}
		&|\langle\B(\uprho_\varepsilon + \varepsilon\z+\u)-\B(\u),(\uprho_\varepsilon + \varepsilon\z+\u)-(\u)\rangle|\nonumber\\&\leq\|\uprho_\varepsilon + \varepsilon\z\|_{\V}\||\u|(\uprho_\varepsilon + \varepsilon\z)\|_{\H}\leq\frac{1}{2\beta}(\|\uprho_\varepsilon\|_{\V}+\|\varepsilon\z\|_{\V})^2+\frac{\beta}{2}\||\u|(\uprho_\varepsilon + \varepsilon\z)\|_{\H}^2\nonumber\\&\leq\frac{1}{2\beta}\|\uprho_\varepsilon\|_{\V}^2+ \frac{3}{2\beta}\|\varepsilon\z\|_{\V}^2+ \frac{1}{\beta}\|\u_{\varepsilon}\|_{\V}\|\varepsilon\z\|_{\V}+\frac{1}{\beta}\|\u\|_{\V}\|\varepsilon\z\|_{\V}+\frac{\beta}{2}\||\u|(\uprho_\varepsilon + \varepsilon\z)\|_{\H}^2.
		\end{align}
		Making use of H\"older's inequality, \eqref{lady} and \eqref{poin}, we obtain 
		\begin{align}\label{usc14}
		\big\langle\mathcal{C}(\uprho_\varepsilon + \varepsilon\z+\u)-\mathcal{C}(\u),\varepsilon \z\big\rangle&\leq\big\{\|\uprho_\varepsilon + \varepsilon\z+\u\|^3_{\L^4}+\|\u\|^3_{\L^4}\big\}\|\varepsilon\z\|_{\L^4}\nonumber\\&\leq C\big\{\|\u_{\varepsilon}\|^3_{\L^4}+\|\u\|^3_{\L^4}\big\}\|\varepsilon\z\|_{\V}.
		\end{align}
		By \eqref{MO_c}, we have
		\begin{align}\label{usc15}
		-\beta\big\langle\mathcal{C}(\uprho_\varepsilon + \varepsilon\z+\u)-\mathcal{C}(\u),(\uprho_\varepsilon + \varepsilon\z+\u)-(\u)\big\rangle\leq-\frac{\beta}{2}\||\u|(\uprho_\varepsilon + \varepsilon\z)\|_{\H}^2.
		\end{align}
		Using \eqref{usc11}-\eqref{usc15} with \eqref{usc7} in \eqref{usc1}, we arrive at
		\begin{align}\label{usc16}
		&\quad\frac{\d}{\d t} \|\uprho_\varepsilon(t)\|_{\H}^2 + 2\bigg(\mu-\frac{1}{2\beta}\bigg)\|\uprho_\varepsilon(t)\|_{\V}^2 \nonumber\\&\leq C\bigg\{ \|\u_{\varepsilon}(t)\|_{\V}^{2} +\|\u(t)\|_{\V}^{2}+ \|\u_{\varepsilon}(t)\|_{\V}+\|\u(t)\|_{\V}+\|\u_{\varepsilon}(t)\|^3_{\L^4}+\|\u(t)\|^3_{\L^4}\bigg\}\|\varepsilon \z(t)\|_{\V}\nonumber\\&\quad+\|\uprho_\varepsilon(t)\|^2_{\H}+C \|\varepsilon\z(t)\|^2_{\V},
		\end{align}
		for a.e. $t\in[-t_0,T]$. By integrating the above inequality from $-t_0$ to $t$, $t\in[-t_0,T]$, we get 
		\begin{align}\label{usc17}
		\|\uprho_\varepsilon(t)\|^2_{\H} \leq \|\uprho_\varepsilon(-t_0)\|^2_{\H} +2 \int_{-t_0}^{t}  \|\uprho_\varepsilon(s)\|^2_{\H} \d s + \int_{-t_0}^{t} \upbeta_\varepsilon(s)  \d s, 
		\end{align}
		where
		\begin{align*}
		\upbeta_\varepsilon =&C\bigg\{ \|\u_{\varepsilon}\|_{\V}^{2} +\|\u\|_{\V}^{2}+ \|\u_{\varepsilon}\|_{\V}+\|\u\|_{\V}+\|\u_{\varepsilon}\|^3_{\L^4}+\|\u\|^3_{\L^4}\bigg\}\|\varepsilon\z\|_{\V}+C \|\varepsilon\z\|^2_{\V}.
		\end{align*}
		Then by the classical Gronwall inequality, we deduce that 
		\begin{align}\label{usc18}
		\|\uprho_\varepsilon(t)\|^2_{\H}\leq\bigg(\varepsilon^2\|\z(-t_0)\|^2_{\H}+\int_{-t_0}^{t}\upbeta_{\varepsilon}(s)\d s\bigg)e^{2 (t+t_0)}
		\leq\bigg(\frac{\varepsilon^2}{\lambda_1}\|\z(-t_0)\|^2_{\V}+\int_{-t_0}^{t}\upbeta_{\varepsilon}(s)\d s\bigg)e^{2 (t+t_0)}.
		\end{align}
		for all $t\in [-t_0,T]$. On the other hand, we have	\begin{align*}
		&\quad\int_{-t_0}^{t}\upbeta_{\varepsilon}(s)\d s\nonumber\\ &
		\leq\varepsilon C\bigg\{\|\u_{\varepsilon}\|_{\mathrm{L}^{2}([-t_0,t];\V)}+\|\u\|_{\mathrm{L}^{2}([-t_0,t];\V)} +(t+t_0)^{1/2}\|\u_{\varepsilon}\|_{\mathrm{L}^{2}([-t_0,t];\V)}+(t+t_0)^{1/2}\|\u\|_{\mathrm{L}^{2}([-t_0,t];\V)}\nonumber\\&\quad\quad\quad+(t+t_0)^{1/4}\|\u_{\varepsilon}\|^3_{\mathrm{L}^4([-t_0,t];\widetilde{\L}^4)}+(t+t_0)^{1/4}\|\u\|^3_{\mathrm{L}^4([-t_0,t];\widetilde{\L}^4)}\bigg\}\|\z\|_{\mathrm{L}^{\infty}([-t_0,t];\V)}\nonumber\\&\quad\quad\quad+\varepsilon^2(t+t_0)\|\z\|^2_{\mathrm{L}^{\infty}([-t_0,t];\V)}.
		\end{align*}
		Since the processes $\u_\varepsilon, \u\in \mathrm{L}^{\infty}_{\mathrm{loc}}([-t_0,\infty);\H)\cap\mathrm{L}^{2}_{\mathrm{loc}}([-t_0,\infty);\V)\cap\mathrm{L}^{4}_{\mathrm{loc}}([-t_0,\infty);\widetilde{\L}^{4})$, and $\z\in\mathrm{L}^{\infty}_{\mathrm{loc}}([-t_0,\infty);\V)\cap\mathrm{L}^{4}_{\mathrm{loc}}([-t_0,\infty);\D(\A))$,  therefore  $\int_{-t_0}^{t}\upbeta_\varepsilon(s)\d s\to 0 \text{ as } \varepsilon\to 0$. Hence, by \eqref{usc18}, we immediately have  $$\lim_{\varepsilon\to 0^+}\|\uprho_\varepsilon(t)\|^2_{\H}=0,$$ which completes the proof of \eqref{usc} by taking $t=0$. Hence $(K_2)$ is verified for $r=3$.
		
		Since both the conditions $(K_1)$ and $(K_2)$  hold for our model, the property of upper semi-continuity \eqref{51} holds true in $\H$.
		\vskip 2mm
		\noindent
		\textbf{Step III.} \emph{Proof of \eqref{K_3}:} In order to prove \eqref{K_3}, it is enough to show that for any bounded subset $\G\subset \H,$ we have 
		\begin{align}\label{usc81}
		\lim_{\varepsilon\to\varepsilon_0}\sup_{\x\in \G} \|\varphi_{\varepsilon}(t_0, \theta_{-t_0}\omega)\x-\varphi_{\varepsilon_0}(t_0, \theta_{-t_0}\omega)\x\|_{\H}=0.
		\end{align}
		For any $\x\in \G,$ let us take $\u_\varepsilon(t)=\varphi_{\varepsilon}(t+t_0,\theta_{-t_0})\x$ and $\u_{\varepsilon_0}(t)=\varphi_{\varepsilon_0}(t+t_0,\theta_{-t_0})\x$. Let $\u_{\varepsilon}(\cdot)$ be the unique weak solution of the system \eqref{S-CBF} and $\u_{\varepsilon_0}(\cdot)$ be the unique weak solution of the system \eqref{S-CBF} when $\varepsilon$ replaced by $\varepsilon_0$, with initial condition $\x$ at $t=-t_0$. Also, let $$\y(t)= \u_\varepsilon(t)-\u_{\varepsilon_0}(t), \ \ t\in[0,T].$$ 	Clearly $\y(\cdot)$ satisfies
		\begin{equation}\label{cSCBF_w}
		\left\{
		\begin{aligned}
		\d\y+\{\mu \A\y+\B(\y+\u_{\varepsilon_0})-\B(\u_{\varepsilon_0})+\beta \mathcal{C}(\y+\u_{\varepsilon_0})-\beta\mathcal{C}(\u_{\varepsilon_0})\}\d t&= \varepsilon^*\d\text{W}(t),  \\ 
		\y(-t_0)&=\mathbf{0},
		\end{aligned}
		\right.
		\end{equation}
		in $\V'+\wi\L^{\frac{r+1}{r}}$ for all $t\in[-t_0,T]$, where $\e^*=\e-\e_0$. 	Let us introduce $\w(\cdot)=\y(\cdot)-\varepsilon^*\z(\cdot)$, where $\z(\cdot)$ is the unique solution of \eqref{OUPe}. Then $\w(\cdot)$ satisfies the following equation in $\V'+\wi\L^{\frac{r+1}{r}}$:
		\begin{equation}\label{cscbf_rho2}
		\left\{
		\begin{aligned}
		\frac{\d\w}{\d t} &= -\mu \A\w - \B(\w + \varepsilon^*\z+\u)+\B(\u) - \beta \mathcal{C}(\w + \varepsilon^*\z +\u)+\beta\mathcal{C}(\u) +\varepsilon^* \alpha \z, \\
		\w(-t_0)&= -\varepsilon^* \z(-t_0).
		\end{aligned}
		\right.
		\end{equation}
		The above system is similar to \eqref{cscbf_eta1} and a calculation similar to \eqref{usc9} (for $r>3$) and \eqref{usc17} (for $r=3$ and $2\beta\mu\geq1$) yields \eqref{52}. 
	\end{proof}

	\medskip\noindent
	{\bf Acknowledgments:}    The first author would like to thank the Council of Scientific $\&$ Industrial Research (CSIR), India for financial assistance (File No. 09/143(0938)/2019-EMR-I).  M. T. Mohan would  like to thank the Department of Science and Technology (DST), Govt of India for Innovation in Science Pursuit for Inspired Research (INSPIRE) Faculty Award (IFA17-MA110).

\end{document}